\definecolor{astral}{RGB}{46,116,181}
\definecolor{darkslategray}{rgb}{0.18, 0.31, 0.31}
\definecolor{warmblack}{rgb}{0.0, 0.46, 0.36}
\newtheorem{note}[]{Note}
\begin{document}
	
%%%%% title and author(s):
% \markboth{Author(s)}{Short Title}
% \title{Title}
	
\markboth{S. Santra and R. Behera}{Enhancing accuracy with an adaptive discretization for the non-local integro-PDEs involving initial time singularities
%Enhancing accuracy with an adaptive discretization for the integro-partial differential equations involving non-local operators with initial time singularities
%An adaptive discretization of the integro-PDEs involving non-local operators with initial time singularities
 %Analysis of the integro-PDEs involving non-local operators with initial time singularities
 }
\title{{\color{warmblack} Enhancing accuracy with an adaptive discretization for the non-local integro-partial differential equations involving initial time singularities
%Integro-partial differential equations involving non-local operators with initial time singularities on a non-uniform mesh
 %Stability and convergence analysis on a non-uniform mesh for integro-partial differential equations involving non-local operators with initial time singularities
 }}
	
	% single author:
	% \author[AUTHOR]{AUTHOR}
	% \address{address of AUTHOR}
	% \email{{\tt email address of AUTHOR} (AUTHOR)}
	
%	\author[D. Juki\'c]{Dragan Juki\'c\corrauth}
%	\address{Department of Mathematics, University of Osijek, Trg Ljudevita Gaja 6, HR-31 000 Osijek, Croatia}
%	\email{{\tt jukicd@mathos.hr} (D. Juki\'c)}

	% multiple authors:
	% Please mark \corrauth after the name of the corresponding author.
	% different addresses:
	%\author[AUTHOR1 and AUTHOR2]{AUTHOR1\affil{1}\comma\corrauth and AUTHOR2\affil{2}}
	%\address{\affilnum{1}\ address of AUTHOR1\\   \affilnum{2}\ address of AUTHOR2}
	%
	%same address:
	\author[Santra and Behera]{Sudarshan Santra\affil{1} and Ratikanta Behera\affil{1}\corrauth}
 
	\address{\affilnum{1}{Department of Computational and Data Sciences, Indian Institute of Science (IISc), Bangalore, 560012, India.}}
	
	\emails{{\tt sudarshans@iisc.ac.in} (S. Santra), {\tt ratikanta@iisc.ac.in} (R. Behera)}
	%

%%%%% Begin Abstract %%%%%%%%%%%
\begin{abstract}
This work aims to construct an efficient and highly accurate numerical method to address the time singularity at $t=0$ involved in a class of time-fractional parabolic integro-partial differential equations in one and two dimensions. The $L2$-$1_\sigma$ scheme is used to discretize the time-fractional operator, whereas a modified version of the composite trapezoidal approximation is employed to discretize the Volterra operator in time. Subsequently, it helps to convert the proposed model into a second-order boundary value problem in a semi-discrete form. The multi-dimensional Haar wavelets are then used for grid adaptation and efficient computations for the two-dimensional problem, whereas the standard second-order approximations are employed to approximate the spatial derivatives for the one-dimensional case. The stability analysis is carried out on an adaptive mesh in time. The convergence analysis leads to $\mathcal{O}(N^{-2}+M^{-2})$ accurate solution in the space-time domain for the one-dimensional problem having time singularity based on the $L^\infty$ norm for a suitable choice of the grading parameter. Furthermore, it provides $\mathcal{O}(N^{-2}+\mathcal{M}^{-3})$ accurate solution for the two-dimensional problem having unbounded time derivative at $t=0$. The analysis also highlights a higher order accuracy for a sufficiently smooth solution resides in $C^3(\overline{\Omega}_t)$ even if the mesh is discretized uniformly. The truncation error estimates for the time-fractional operator, integral operator, and spatial derivatives are presented. In addition, we have examined the impact of various parameters on the robustness and accuracy of the proposed method. Numerous tests are performed on several examples in support of the theoretical analysis. The advancement of the proposed methodology is demonstrated through the application of the time-fractional Fokker-Planck equation and the fractional-order viscoelastic dynamics having weakly singular kernels. It also confirms the superiority of the proposed method compared with existing approaches available in the literature.

\end{abstract}
%%%%% end %%%%%%%%%%%
	
%%%%% Keywords %%%%%%%%%%%
\keywords{Integro-PDEs, Caputo derivative, Time singularity, Multi-dimensional wavelet, $L2$-$1_\sigma$ scheme, Graded mesh, Error analysis.}
	
%%%% AMS subject classifications %%%%
\ams{45K05, 45D05, 26A33, 65M06, 65M12}
	
%%%% maketitle %%%%%
\maketitle
	
%%%% Start %%%%%%

%%%%%%%%%%%%%%%%%%%%%%%%%%%%%%%%%%%%%%%%%%%%%%%%%%%%%%%%%%%%%%%%%%%%%%%%%%%%%%%%%%%%%%%%%%%%%%%%%%%%%%%%%%%%%%%%%%%%%

\section{Introduction}\label{sec intro}
During the past few decades, the qualitative analysis of fractional differential equations 	(FDEs) and fractional-order integro-differential equations (FOIDEs) has gained popularity owing to their practical application in numerous fields of science and technology, including control theory \cite{SinghEulerwavelets2024}, neural networks in cryptography \cite{RoohiAdaptive2020}, options trading \cite{AnAspace2021}, and viscoelasticity \cite{BjorklundErrorestimates2024}, etc. However, the present work is more focused on the analysis of the time-fractional integro-partial differential equations (TFIPDEs), which have a widespread application in the time-fractional Fokker-Planck equation \cite{DengNumerical2007} and the fractional-order viscoelastic dynamics. In this scenario, the current state of the system depends on the current time as well as on its past history over a certain time span which makes the fractional-order system more accurate in simulating a physical process that incorporates initial and boundary data. Furthermore, the singular behavior of the solution of a fractional-order model not only differs from the classical integer-order systems but also poses a greater challenge for solving them. 

To discuss the complexity of numerical simulation to address time singularity, in this work, we consider the time-fractional integro-partial differential equations involving time singularity at $t=0$, which can be expressed in the following generalized form:
 \begin{equation}\label{Pdoc1_1}
    \left\{
    \begin{array}{ll}
    \partial_t^\alpha \mathcal{U}(\boldsymbol{x},t)+\mathcal{L} \mathcal{U}(\boldsymbol{x},t)+\mu\displaystyle{\int_0^t} \mathcal{K}(\boldsymbol{x},t-\xi)\mathcal{U}(\boldsymbol{x},\xi)d\xi=f(\boldsymbol{x},t),\\[8pt]
    \mbox{for}~(\boldsymbol{x},t)\in\Omega\times\Omega_t,~ \mbox{with}\\[4pt]
    \mathcal{U}(\boldsymbol{x},0)=g(\boldsymbol{x})~\mbox{for}~\boldsymbol{x}\in \overline{\Omega},\\
    \mathcal{U}(\boldsymbol{x},t)=h(\boldsymbol{x},t)~\mbox{for}~(\boldsymbol{x},t)\in \partial\Omega\times\overline{\Omega}_t,
    \end{array}\right.
\end{equation}
 where $\alpha\in(0,1)$. $\Omega\subset \mathbb{R}^d$ ($d=1,2$) is an open and bounded domain with $\partial\Omega$ denotes its boundary, and $\Omega_t=(0,T]$. $\partial_t^\alpha$ denotes the fractional Caputo operator \cite{MillerAnintroduction1993} of order $\alpha$ with respect to $t$. The operator $\mathcal{L}$ is defined as:
\begin{equation*}
        \mathcal{L} \mathcal{U}(\boldsymbol{x},t):=-p(\boldsymbol{x})\Delta \mathcal{U}(\boldsymbol{x},t)+\boldsymbol{q}\cdot\nabla \mathcal{U}(\boldsymbol{x},t)+r(\boldsymbol{x})\mathcal{U}(\boldsymbol{x},t),
\end{equation*} 
with $\boldsymbol{q}=(q_1(\boldsymbol{x}),q_2(\boldsymbol{x}))$, $\nabla \mathcal{U}:=(\mathcal{U}_x,\mathcal{U}_y)$, and $\Delta \mathcal{U}:=\mathcal{U}_{xx}+\mathcal{U}_{yy}$. $p,q_1,q_2,r,g,h,f$ are sufficiently smooth functions with $r(\boldsymbol{x})\geq 0$, $p(\boldsymbol{x})\geq p_0>0$ for $\boldsymbol{x}\in\overline{\Omega}$. In particular, for 1D problem, when $\Omega\subset\mathbb{R}$, we have $\mathcal{L} \mathcal{U}(x,t):=-p(x)\mathcal{U}_{xx}(x,t)+q(x)\mathcal{U}_{x}(x,t)+r(x)\mathcal{U}(x,t)$. The kernel $\mathcal{K}$ is smooth and is considered to be positive and real-valued. $\mu$ is a fixed positive constant. Further, it is assumed that the solution and its temporal derivatives satisfy the following regularity conditions:
\begin{equation}\label{Pdoc1_52}
    \Big\lvert\dfrac{\partial^j\mathcal{U}}{\partial t^j}(\cdot,t)\Big\rvert\leq C(1+t^{\alpha-j}),~\mbox{for}~j=0,1,2,3,
\end{equation}
for all $t\in\Omega_t$. $C$ is some generic constants, which can take different values at different places throughout the manuscript. Also, note that the solution is sufficiently smooth in its spatial variables.

If, we consider $\mu=0$, then (\ref{Pdoc1_1}) becomes time-fractional sub-diffusion equations, which have been extensively studied in the literature \cite{HanLinearized2023,GraciaConvergence2018}; for that problems, it is well known that given smooth and compatible data, nevertheless the solution will exhibit a weak singularity at the initial time $t=0$. Despite the presence of such singularities of the temporal derivatives in the solution, many researchers make a priori assumption that higher-order temporal derivatives of the solution are smooth on the closed domain, to deal with the numerical analysis of finite difference techniques for solving them using Taylor expansions in their truncation error analyses. For instance, see the works discussed in \cite{GaoAfinite2012,LiFast2022,AlikhanovAhigh-order2021}. However, this approach is generally impractical for fractional-order differential models, as the fractional derivatives are nonlocal and involve weakly singular kernels. In the present research, we construct a hybrid numerical scheme, which is a combination of the non-uniform $L2$-$1_\sigma$ discretization and the multi-dimensional wavelet approximation to address the time singularity of the proposed problem. The non-uniform mesh in time is shown to be more effective in enhancing the rate of accuracy compared to the uniform mesh.

For a comprehensive study of fractional-order problems, including FDEs and FOIDEs with initial time singularities, we refer the readers to the works of Chen {\em et al.} \cite{ChenAnanalysis2019}, Santra {\em et al.} \cite{SantraHigherorder2023}, Babaei and Salehi \cite{BabaeiAnefficient2021}, and Dehghan {\em et al.} \cite{DehghanALegendre2018}. It is important to note that semi-analytical approaches such as the Adomian decomposition method \cite{SantraAnovelapproach2022}, the homotopy analysis method \cite{AbbasbandyOnconvergence2013}, and the variational iteration method \cite{HamoudModified2018} tend to be less accurate. These methods often require a lot of iterations to reach the desired accuracy, which makes the process more time-consuming. Several attempts have been made to develop accurate, stable, and high-order approximations for the fractional derivatives. Among them, the $L1$ discretization is a good approximation, which has been widely used in \cite{GraciaConvergence2018,SantraAnovel2022}, for fractional problems having unbounded time derivative at $t=0$. The method provides first-order convergence on any subdomain away from the origin, whereas it produces a $\mathcal{O}(N^{-\alpha})$ accurate solution over the entire region, where $N$ denotes the number of mesh intervals towards the temporal direction. This phenomenon is highlighted in \cite{SantraAnalysis2022,SantraAnalysis2021}. Even though the non-uniform mesh is more effective than the uniform mesh in capturing the initial layer, it fails to occur with second-order accuracy \cite{StynesErroranalysis2017}. Furthermore, considering the higher regularity assumption for the solution of the fractional problems, the $L1$ discretization gives $\mathcal{O}(N^{-(2-\alpha)})$ accuracy \cite{GaoAfinite2012}. In an attempt to develop high-order accuracy, Gao {\em et al.} \cite{GaoAnew2014} developed an efficient approximation called $L1$-$2$ scheme, which gives $\mathcal{O}(N^{-(3-\alpha)})$ temporal accuracy at $t_n~ (n\geq 2)$ but at $t_1$, it produces $\mathcal{O}(N^{-(2-\alpha)})$ convergence rate. In contrast, Alikhanov \cite{AlikhanovAnew2015} constructed the $L2$-$1_\sigma$ scheme on a uniform mesh that provides $\mathcal{O}(N^{-(3-\alpha)})$ accurate solution for all $t_n~(n\geq 1)$ for which the solution resides in $C^3(\overline{\Omega}_t)$, where $C^3(\overline{\Omega}_t)$ denotes the space of all functions whose third-order temporal derivatives exist and are continuous on $\overline{\Omega}_t$. However, the present work focuses on constructing a hybrid approach based on the $L2$-$1_\sigma$ scheme and the multi-dimensional Haar wavelets, which is designed on a non-uniform mesh in time to address the time singularity of the proposed model for $\Omega\subset\mathbb{R}^2$.

Wavelets are numerical concepts that allow one to represent a function in terms of basis functions, and wavelet-based numerical methods take advantage of the fact that functions with localized regions of sharp transitions are well compressed using wavelet decomposition \cite{BeheraMultilevel2015,SchneiderWavelet2010}. This property allows local grid refinement up to an arbitrarily small scale without a drastic increase in the number of collocation points; thus, high-resolution computations can be performed only in regions where sharp transitions occur. It is worth mentioning that in the last few decades, the Haar wavelet method has become a valuable tool for solving differential as well as integro-differential equations, and many authors have verified it \cite{PervaizHaar2020,Siraj-ul-IslamAcomparative2010}. Specifically, the Haar wavelet is more popular owing to its beneficial properties such as orthogonality, simple applicability, and compact support. Also, note that the basis functions are piecewise constant, for which it requires less effort to maintain the accuracy in the numerical approximation of the solution and its derivatives. In addition to this, the convergence of the Haar wavelet approximation demands less regularity compared to other wavelets like Hermite \cite{FaheemAhigh2022}, Bernoulli \cite{KeshavarzBernoulli2014}, Chebyshev \cite{LiSolving2010} wavelets that require a high regularity assumption on the solution to achieve the desired accuracy. The major contributions of the present study are highlighted as follows:

\begin{itemize}
    \item We design an efficient hybrid numerical scheme that combines the $L2$-$1_\sigma$ discretization and the multi-dimensional Haar wavelets with non-uniform temporal mesh to address a class of TFIPDEs in one and two dimensions to resolve the time singularity effectively. 

    \item The stability analysis of the proposed problem is carried out on a non-uniform mesh based on the $L^\infty$ norm. It helps to estimate the main theoretical convergence of the proposed hybrid scheme based on the $L^2$ norm. It also highlights the convergence on a uniformly distributed mesh.

    \item We confirm that the proposed scheme leads to a $\mathcal{O}(N^{-2})$ accurate solution in time on a non-uniform mesh with a suitable choice of grading parameter with unbounded time derivatives at $t=0$. In this case, the uniform mesh reduces to $\mathcal{O}(N^{-\alpha})$ accuracy.

    \item We discuss the truncation error estimates corresponding to the time-fractional operator, Volterra integral operator, and the spatial derivatives (when $\Omega\subset\mathbb{R}$), along with impact of these approximations on the error bounds. It can be noticed that the error bound of the integral operator is dominated by the error bound of the time-fractional operator, providing a temporal accuracy of $\mathcal{O}(N^{-2})$ over the space-time domain.
    
    \item Each of the theoretical investigations is validated through numerous test examples of type (\ref{Pdoc1_1}) for both $\Omega\subset\mathbb{R}/\mathbb{R}^2$. The results are compared with the results obtained by the $L1$ scheme, which provides a $\mathcal{O}(N^{-(2-\alpha)})$ temporal accuracy for a suitable choice of the grading parameter. In contrast, the proposed scheme leads to $\mathcal{O}(N^{-2})$ accuracy. It also highlights a higher rate of accuracy for a sufficiently smooth solution $\mathcal{U}\in C^3(\overline{\Omega}_t)$ even if the mesh is discretized uniformly.  

\end{itemize}

\noindent The rest of the manuscript is organized as follows.
We start with the temporal semi-discretization of the proposed problem on a non-uniform mesh in time in Section \ref{sec_Numerical_scheme}. The discretization of the time-fractional operator and the integral operator in time are also discussed. The approximation of the spatial derivatives for $\Omega\subset\mathbb{R}/\mathbb{R}^2$ are also demonstrated in this section. Specifically, when $\Omega\subset\mathbb{R}^2$, the multi-dimensional Haar wavelet decomposition is employed to approximate the spatial derivatives. The approximation is based on two-dimensional collocation points, which are distributed uniformly over $\Omega$. The convergence of this multi-dimensional approach is also examined. Section \ref{sec_Stability} is focused on the stability of the proposed TFIPDEs based on the $L^\infty$ norm for a suitably chosen non-uniform mesh. The convergence analysis is carried out in Section \ref{sec_Convergence}. It provides the truncation error analysis of the fractional derivative, integral operator, and the spatial derivatives and discusses its impact on the error bounds through several remarks. Further, it shows the theoretical error bounds of the proposed hybrid approach, which is a combination of $L2$-$1_\sigma$ scheme and the multi-dimensional Haar wavelets when $\Omega\subset\mathbb{R}^2$, illustrating a $\mathcal{O}(N^{-2})$ temporal accuracy on a non-uniform mesh with a suitably chosen grading parameter for solution having time singularity. It also highlights the theoretical convergences for a sufficiently smooth solution resides in $C^3(\overline{\Omega}_t)$. Each of the theoretical arguments is validated through numerous test examples in Section \ref{sec_example}. Also discusses how the proposed method can be used with the time-fractional Fokker-Planck equation and the fractional-order viscoelastic dynamics with weakly singular kernels. It also confirms the superiority of the proposed methodology over existing approaches. The manuscript ends with concluding remarks in Section \ref{sec_concl} demonstrating the main contribution of the present work and possible extension for future investigation.

%%%%%%%%%%%%%%%%%%%%%%%%%%%%%%%%%%%%%%%%%%%%%%%%%%%%%%%%%%%%%%%%%%%%%%%%%%%%%%%%%%%%%%%%%%%%%%%%%%%%%%%%%%%%%%%%%%%%%%%%%%%%%%%%%%%%%%%%%%%%%%%%%%%%%%%
\section{Numerical methods} \label{sec_Numerical_scheme}
The solution of the proposed problem (\ref{Pdoc1_1}) has an unbounded time derivative at $t=0$. To address this issue, a graded mesh is used in the temporal direction. Designing a uniform mesh towards spatial direction is sufficient as the proposed problem remains smooth in its spatial variables.

%%%%%%%%%%%%%%%%%%%%%%%%%%%%%%%%%%%%%%%%%%%%%%%%%%%%%%%%%%%%%%%%%%%%%%%%%%%%%%%%%%%%%%
\subsection{The temporal semi-discretization based on graded mesh}
Let $N\in\mathbb{N}$ be fixed, and define $t_n=T(n/N)^\nu$ for $n=0,1,\ldots, N$. Then, we have the graded mesh in time as $\varOmega_t^N:=\{t_n~:~n=0,1,\ldots, N\}$. $\nu\geq1$ is the grading parameter, in particular for $\nu\neq1$ will lead to a non-uniform mesh in time. However, for $\nu=1$ the mesh $\varOmega_t^N$ becomes uniform. Further, we define the step size $\tau_n=t_n-t_{n-1}$ for $n=1,2,\ldots, N$, which satisfies the following bounds:
\begin{equation}\label{Pdoc1_53}
\left\{
    \begin{array}{ll}
         \tau_1=TN^{-\nu},  \\
         \tau_n=T\Big(\dfrac{n}{N}\Big)^\nu-T\Big(\dfrac{n-1}{N}\Big)^\nu\leq CTN^{-\nu}(n-1)^{\nu-1},~n=2,3,\ldots,N.
    \end{array}\right.
\end{equation}
The Alikhanov $L2$-$1_\sigma$ scheme \cite{AlikhanovAnew2015} is used on the graded mesh to approximate the fractional operator $\partial_t^\alpha$ at $t=t_{n+\sigma}$, which can be described briefly as follows:
\begin{align}
    \nonumber\partial_t^\alpha \mathcal{U}(\cdot,t_{n+\sigma})\approx {}^{L2\mbox{-}1_\sigma}\mathcal{D}_N^\alpha\mathcal{U}^{n+\sigma}:=&\dfrac{1}{\Gamma(1-\alpha)}\Big( \sum_{j=1}^n \int_{t_{j-1}}^{t_j}(t_{n+\sigma}-\rho)^{-\alpha}\mathscr{P}'_{2,j}(\cdot,\rho)d\rho+\int_{t_n}^{t_{n+\sigma}}(t_{n+\sigma}-\rho)^{-\alpha}\mathscr{P}'_{1,j}(\cdot,\rho)d\rho\Big)\\[2pt]
    =&\Lambda_n^n\mathcal{U}^{n+1}-\Lambda_n^0\mathcal{U}^0-\sum_{j=1}^n (\Lambda_n^{j}-\Lambda_n^{j-1})\mathcal{U}^{j},~\mbox{for}~n=0,1,\ldots,N-1,\label{Pdoc1_2}
\end{align}
 where $\mathscr{P}_{2,j}(\cdot,\rho)$ denotes the quadratic polynomial that interpolates $\mathcal{U}(\cdot,\rho)$ at the points $t_{j-1},t_j$ and $t_{j+1}$. Whereas $\mathscr{P}_{1,j}(\cdot,\rho)$ is the linear polynomial that interpolates $\mathcal{U}(\cdot,\rho)$ at the points $t_{j},t_{j+1}$. Further, $t_{n+\sigma}=t_n+\sigma\tau_{n+1}$ for some $\sigma\in(0,1)$. The coefficients can be expressed as: $\Lambda_0^0=\tau_1^{-1}a_{0,0}$ and for $n=1,2,\ldots,N-1$, we have
 \begin{equation}\label{Pdoc1_22}
    \Lambda_n^{j}=\dfrac{1}{\tau_{j+1}}\begin{cases}
                    a_{n,0}-b_{n,0},  &j=0, \\
                    a_{n,j}+b_{n,j-1}-b_{n,j},~&1\leq j\leq n-1,\\
                    a_{n,n}+b_{n,n-1}, &j=n,
                    \end{cases}
\end{equation}
where $a_{n,n}=\dfrac{\sigma^{1-\alpha}}{\Gamma(2-\alpha)}\tau_{n+1}^{1-\alpha}$ for $n=0,1,\ldots,N-1$, and for $n=1,2,\ldots,N-1$ and $j=0,1,\ldots,n-1$, one has
\begin{align*}
    a_{n,j}=\dfrac{1}{\Gamma(1-\alpha)}\int_{t_j}^{t_{j+1}}(t_{n+\sigma}-\rho)^{-\alpha}d\rho,~~b_{n,j}=\dfrac{2}{(t_{j+2}-t_j)\Gamma(1-\alpha)}\int_{t_j}^{t_{j+1}}(t_{n+\sigma}-\rho)^{-\alpha}(\rho-t_{j+1/2})d\rho.
\end{align*}
The Volterra integral operator can be discretized as follows:
\begin{align}
    \nonumber\displaystyle{\int_{0}^{t_{n+\sigma}}}\mathcal{K}(\cdot,t_{n+\sigma}-\xi)\mathcal{U}(\cdot,\xi)\;d\xi=&\sum_{j=0}^{n-1}\displaystyle{\int_{t_j}^{t_{j+1}}}\mathcal{K}(\cdot,t_{n+\sigma}-\xi)\mathcal{U}(\cdot,\xi)\;d\xi+\displaystyle{\int_{t_n}^{t_{n+\sigma}}}\mathcal{K}(\cdot,t_{n+\sigma}-\xi)\mathcal{U}(\cdot,\xi)\;d\xi\\
    \nonumber=& \sum_{j=0}^{n-1}\dfrac{\tau_{j+1}}{2}\big[\mathcal{K}(\cdot,t_{n+\sigma}-t_{j+1})\mathcal{U}(\cdot,t_{j+1})+\mathcal{K}(\cdot,t_{n+\sigma}-t_j)\mathcal{U}(\cdot,t_j)\big]\\
    \nonumber&\hspace{3cm}+\dfrac{\sigma\tau_{n+1}}{2}\big[\mathcal{K}(\cdot,t_{n+\sigma}-t_n)\mathcal{U}(\cdot,t_n)+\mathcal{K}(\cdot,0)\mathcal{U}(\cdot,t_{n+\sigma})\big]\\
    \nonumber\approx&\sum_{j=0}^{n-1}\dfrac{\tau_{j+1}}{2}\big[\mathcal{K}(\cdot,t_{n+\sigma}-t_{j+1})\mathcal{U}^{j+1}+\mathcal{K}(\cdot,t_{n+\sigma}-t_j)\mathcal{U}^j\big]\\
    \nonumber&\hspace{3cm}+\dfrac{\sigma\tau_{n+1}}{2}\mathcal{K}(\cdot,t_{n+\sigma}-t_n)\mathcal{U}^n+\dfrac{\sigma\tau_{n+1}}{2}\mathcal{K}(\cdot,0)\big[\sigma\mathcal{U}^{n+1}+(1-\sigma)\mathcal{U}^n\big]\\
    :=&\mathscr{J}_N \mathcal{U}^{n+\sigma}.\label{Pdoc1_4}
\end{align}
Based on this discretization, one yields
\begin{equation}\label{Pdoc1_5}
    \left\{
    \begin{array}{ll}
    {}^{L2\mbox{-}1_\sigma}\mathcal{D}_N^\alpha\mathcal{U}(\cdot,t_{n+\sigma})+\mathcal{L}\mathcal{U}(\cdot,t_{n+\sigma})+\mu\mathscr{J}_N\mathcal{U}(\cdot,t_{n+\sigma})=f(\cdot,t_{n+\sigma})+{}^{(1)}\mathscr{R}^{n+\sigma} +{}^{(2)}\mathscr{R}^{n+\sigma},\\[4pt]
    \mathcal{U}(\cdot,t_{n+1})\rvert_{\partial\Omega}=h(\cdot,t_{n+1}),
    \end{array}\right.
\end{equation}
for $n=0,1,\ldots,N-1$ with $\mathcal{U}(\cdot,t_{0})=g(\cdot)$. The remainder terms are given by
\begin{equation}\label{Pdoc1_16}
    {}^{(1)}\mathscr{R}^{n+\sigma}=\Big(\partial_t^\alpha - {}^{L2\mbox{-}1_\sigma}\mathcal{D}_N^\alpha\Big)\mathcal{U}(\cdot,t_{n+\sigma}),~~{}^{(2)}\mathscr{R}^{n+\sigma}=\displaystyle{\int_{0}^{t_{n+\sigma}}}\mathcal{K}(\cdot,t_{n+\sigma}-\xi)\mathcal{U}(\cdot,\xi)\;d\xi-\mathscr{J}_N \mathcal{U}(\cdot,t_{n+\sigma}).
\end{equation}
Then, the temporal semi-discretization is obtained for $n=0,1,\ldots, N-1$, as:
\begin{equation}\label{Pdoc1_62}
    \left\{
    \begin{array}{ll}
    {}^{L2\mbox{-}1_\sigma}\mathcal{D}_N^\alpha\mathcal{U}(\cdot,t_{n+\sigma})+\mathcal{L}\mathcal{U}(\cdot,t_{n+\sigma})+\mu\mathscr{J}_N\mathcal{U}(\cdot,t_{n+\sigma})=f(\cdot,t_{n+\sigma}),\\[4pt]
    \mathcal{U}(\cdot,t_{n+1})\rvert_{\partial\Omega}=h(\cdot,t_{n+1}).
    \end{array}\right.
\end{equation}

%%%%%%%%%%%%%%%%%%%%%%%%%%%%%%%%%%%%%%%%%%%%%%%%%%%%%%%%%%%%%%%%%%%%%%%%%%%%%%%%%%%%%%%%%%%%%%%%%%%%%%%%%%%%%%%%%%%%%%%%%
\subsection{Approximation of the spatial derivatives for $\Omega\subset\mathbb{R}$}
In this section, we solve the semi-discrete problem (\ref{Pdoc1_62}) when $\Omega\subset\mathbb{R}$. Without loss of generality, let us take $\Omega=(0,L)$. The initial and boundary conditions are then given by 
\[\mathcal{U}(x,0)=g(x)~\mbox{for}~x\in [0,L],~~\mathcal{U}(0,t)=h_1(t)~\mbox{and}~\mathcal{U}(L,t)=h_2(t)~\mbox{for}~t\in\overline{\Omega}_t.\]
Further, note that in this case, we have $\mathcal{L} \mathcal{U}(x,t_{n+\sigma}):=-p(x)\mathcal{U}_{xx}(x,t_{n+\sigma})+q(x)\mathcal{U}_{x}(x,t_{n+\sigma})+r(x)\mathcal{U}(x,t_{n+\sigma})$. Let us define the mesh points in spatial direction as $x_m=m\Delta x$ for $m=0,1,\ldots,M$, where $\Delta x=L/M$ for some fixed $M\in\mathbb{N}$. Then. we have the fully discrete mesh as $\varOmega^{M,N}:=\{(x_m,t_n):~m=0,1,\ldots,M;~n=0,1,\ldots,N\}$. Hence, on the discrete domain, we have used the following approximation:
\begin{align}
    \nonumber\mathcal{U}_{xx}(x_m,t_{n+\sigma})&\approx\sigma\mathcal{U}_{xx}(x_m,t_{n+1})+(1-\sigma)\mathcal{U}_{xx}(x_m,t_n)\\
    &\approx\dfrac{\sigma}{(\Delta x)^2}\Big(\mathcal{U}_{m-1}^{n+1}-2\mathcal{U}_{m}^{n+1}+\mathcal{U}_{m+1}^{n+1}\Big)+\dfrac{1-\sigma}{(\Delta x)^2}\Big(\mathcal{U}_{m-1}^{n}-2\mathcal{U}_{m}^{n}+\mathcal{U}_{m+1}^{n}\Big)=: \delta_{\Delta x}^2\mathcal{U}_m^{n+\sigma},\label{Pdoc1_3}\\[6pt]
    \nonumber\mathcal{U}_{x}(x_m,t_{n+\sigma})&\approx\sigma\mathcal{U}_{x}(x_m,t_{n+1})+(1-\sigma)\mathcal{U}_{x}(x_m,t_n)\\
    &\approx\dfrac{\sigma}{2\Delta x}\Big(\mathcal{U}_{m+1}^{n+1}-\mathcal{U}_{m-1}^{n+1}\Big)+\dfrac{1-\sigma}{2\Delta x}\Big(\mathcal{U}_{m+1}^{n}-\mathcal{U}_{m-1}^{n}\Big)=: D_{\Delta x}^0\mathcal{U}_m^{n+\sigma},\label{Pdoc1_21}\\[6pt]
    \mathcal{U}(x_m,t_{n+\sigma})&\approx\sigma\mathcal{U}_m^{n+1}+(1-\sigma)\mathcal{U}_m^n.\label{Pdoc1_54}
\end{align}
Based on this approximation, equation (\ref{Pdoc1_5}) yields
\begin{equation}\label{Pdoc1_55}
    \left\{
    \begin{array}{ll}
    {}^{L2\mbox{-}1_\sigma}\mathcal{D}_N^\alpha\mathcal{U}(x_m,t_{n+\sigma})-p(x_m)\delta_{\Delta x}^2\mathcal{U}(x_m,t_{n+\sigma})+q(x_m)D_{\Delta x}^0\mathcal{U}(x_m,t_{n+\sigma})\\[4pt]
    \hspace{4cm}+r(x_m)\mathcal{U}(x_m,t_{n+\sigma})+\mu\mathscr{J}_N\mathcal{U}(x_m,t_{n+\sigma})=f(x_m,t_{n+\sigma})+\mathscr{R}_m^{n+\sigma},\\[4pt]
    \mbox{for}~m=1,2,\ldots,M-1;~n=0,1,\ldots,N-1,\\[4pt]
    \mathcal{U}(x_m,t_0)=g(x_m)~~\mbox{for}~m=0,1,\ldots,M,\\
    \mathcal{U}(x_0,t_{n+1})=h_1(t_{n+1})~\mbox{and}~\mathcal{U}(x_M,t_{n+1})=h_2(t_{n+1})~~\mbox{for}~n=0,1,\ldots,N-1.
    \end{array}\right.
\end{equation}
The remainder term $\mathscr{R}_m^{n+\sigma}$ is given by
\begin{equation}\label{Pdoc1_56}
    \mathscr{R}_m^{n+\sigma}={}^{(1)}\mathscr{R}_m^{n+\sigma}+{}^{(2)}\mathscr{R}_m^{n+\sigma}+{}^{(3)}\mathscr{R}_m^{n+\sigma}+{}^{(4)}\mathscr{R}_m^{n+\sigma},
\end{equation}
where ${}^{(1)}\mathscr{R}_m^{n+\sigma},~{}^{(2)}\mathscr{R}_m^{n+\sigma},~{}^{(3)}\mathscr{R}_m^{n+\sigma}$ and ${}^{(4)}\mathscr{R}_m^{n+\sigma}$ are occurred due to the approximation to the fractional operator, integral operator, and the second and first-order spatial derivatives, respectively. Hence, we have the fully discrete scheme for the one-dimensional case of (\ref{Pdoc1_1}) as:
\begin{equation}\label{Pdoc1_12}
    \left\{
    \begin{array}{ll}
        {}^{L2\mbox{-}1_\sigma}\mathcal{D}_N^\alpha\mathcal{U}_m^{n+\sigma}-p(x_m)\delta_{\Delta x}^2~\mathcal{U}_m^{n+\sigma}+q(x_m)D_{\Delta x}^0~\mathcal{U}_m^{n+\sigma}+r(x_m)\mathcal{U}_m^{n+\sigma}+\mu\mathscr{J}_N\mathcal{U}_m^{n+\sigma}=f(x_m,t_{n+\sigma}), \\[4pt]
	\mbox{for }m=1,2,\ldots,M-1;~n=0,1,\ldots,N-1,\\[4pt]
	\mathcal{U}_m^0=g(x_m)~~\mbox{for}~ m=0,1,\ldots,M,\\
	\mathcal{U}_0^{n+1}=h_1(t_{n+1})~\mbox{and}~\mathcal{U}_M^{n+1}=h_2(t_{n+1})~~\mbox{for}~ n=0,1,\ldots,N-1.
    \end{array}\right.
\end{equation}
A simple calculation yields the following implicit scheme:
\begin{equation}\label{Pdoc1_6}
    \left\{
    \begin{array}{ll}
	\mathscr{A}_{m-1}\mathcal{U}_{m-1}^{n+1}+\mathscr{B}_{m}\mathcal{U}_{m}^{n+1}+\mathscr{C}_{m+1}\mathcal{U}_{m+1}^{n+1}=\tilde{\mathscr{A}}_{m-1}\mathcal{U}_{m-1}^{n}+\tilde{\mathscr{B}}_{m}\mathcal{U}_{m}^{n}+\tilde{\mathscr{C}}_{m+1}\mathcal{U}_{m+1}^{n}+\mathscr{F}_m^n,\\[4pt]
	\mbox{for }m=1,2,\ldots,M-1;~n=0,1,\ldots,N-1,\\[4pt]
	\mathcal{U}_m^0=g(x_m)\;\; \mbox{for } m=0,1,\ldots,M,\\
	\mathcal{U}_0^{n+1}=h_1(t_{n+1})~\mbox{and}~\mathcal{U}_M^{n+1}=h_2(t_{n+1})~~\mbox{for}~ n=0,1,\ldots,N-1,
    \end{array}\right.
\end{equation}
where the coefficients are given by
\begin{equation*}
    \left\{
    \begin{array}{ll}
	\mathscr{A}_{m-1}=-\dfrac{q(x_m)\sigma}{2\Delta x}-\dfrac{p(x_m)\sigma}{(\Delta x)^2},\\ 
	\mathscr{B}_{m}=\Lambda_n^{n}+\dfrac{2p(x_m)\sigma}{(\Delta x)^2}+r(x_m)\sigma+\dfrac{\mu\sigma^2\tau_{n+1}}{2}\mathcal{K}(x_m,0),\\
	\mathscr{C}_{m+1}=\dfrac{q(x_m)\sigma}{2\Delta x}-\dfrac{p(x_m)\sigma}{(\Delta x)^2},\\
        \tilde{\mathscr{A}}_{m-1}=\dfrac{q(x_m)(1-\sigma)}{2\Delta x}+\dfrac{p(x_m)(1-\sigma)}{(\Delta x)^2},\\
	\tilde{\mathscr{B}}_{m}=-\dfrac{2p(x_m)(1-\sigma)}{(\Delta x)^2}-r(x_m)(1-\sigma)-\dfrac{\mu\tau_n}{2}\mathcal{K}(x_m,t_{n+\sigma}-t_n)-\dfrac{\mu\sigma\tau_{n+1}}{2}\mathcal{K}(x_m,t_{n+\sigma}-t_n)\\
        \hspace{1.1cm}-\dfrac{\mu\tau_{n+1}\sigma(1-\sigma)}{2}\mathcal{K}(x_m,0),\\
        \tilde{\mathscr{C}}_{m+1}=-\dfrac{q(x_m)(1-\sigma)}{2\Delta x}+\dfrac{p(x_m)(1-\sigma)}{(\Delta x)^2},
    \end{array}\right.
\end{equation*}
for $m=1,2,\ldots,M-1$; $n=0,1,\ldots,N-1$. Further, $\mathscr{F}_m^0=\Lambda_0^0\mathcal{U}^0+f(x_m,t_\sigma)$, $\mathscr{F}_m^1=\Lambda_1^0\mathcal{U}^0+(\Lambda_1^{1}-\Lambda_1^{0})\mathcal{U}^{1}-\dfrac{\mu\tau_1}{2}\mathcal{K}(x_m,t_{1+\sigma}-t_0)\mathcal{U}_m^0+f(x_m,t_{1+\sigma})$, and for $n=2,3,\ldots,N-1$, we have
\begin{align}
    \nonumber \mathscr{F}_m^n=&\Lambda_n^0\mathcal{U}_m^0+\sum_{j=1}^n (\Lambda_n^{j}-\Lambda_n^{j-1})\mathcal{U}_m^{j}-\dfrac{\mu\tau_n}{2}\mathcal{K}(x_m,t_{n+\sigma}-t_{n-1})\mathcal{U}_m^{n-1}\\
    &-\mu\sum_{j=0}^{n-2}\dfrac{\tau_{n+1}}{2}\Big[\mathcal{K}(x_m,t_{n+\sigma}-t_{j+1})\mathcal{U}_m^{j+1}+\mathcal{K}(x_m,t_{n+\sigma}-t_{j})\mathcal{U}_m^{j}\Big]+f(x_m,t_{n+\sigma}).\label{Pdoc1_20}
\end{align}
Notice that, at each time level $n$ for $n=0,1,\ldots,N-1$, the unknowns $\mathcal{U}_1^{n+1},\mathcal{U}_2^{n+1},\ldots,\mathcal{U}_{M-1}^{n+1}$ can be solved from the implicit scheme (\ref{Pdoc1_6}), which can be written as:
\[\mathbf{H}\boldsymbol{\mathcal{U}}^{n+1}=\mathbf{\tilde{H}}\boldsymbol{\mathcal{U}}^{n}+\pmb{\mathscr{F}}^n,\]
where $\boldsymbol{\mathcal{U}}^{n+1}=(\mathcal{U}_1^{n+1},\mathcal{U}_2^{n+1},\cdots,\mathcal{U}_{M-1}^{n+1})^T$, $\boldsymbol{\mathcal{U}}^{n}=(\mathcal{U}_1^{n},\mathcal{U}_2^{n},\cdots,\mathcal{U}_{M-1}^{n})^T$, $\pmb{\mathscr{F}}^n=(\mathscr{F}_1^n,\mathscr{F}_2^n,\cdots,\mathscr{F}_{M-1}^n)^T$, and the coefficient matrices $\mathbf{H},\mathbf{\tilde{H}}$ are defined as:
\begin{equation*}
    \mathbf{H}= \begin{pmatrix} 
        \mathscr{B}_1 & \mathscr{C}_2 &  &  & \\
        \mathscr{A}_1 & \mathscr{B}_2 & \mathscr{C}_3 &  &  \\
        & \ddots & \ddots & \ddots &\\
        &  & \mathscr{A}_{M-3} & \mathscr{B}_{M-2} & \mathscr{C}_{M-1}\\
        &  &  & \mathscr{A}_{M-2} & \mathscr{B}_{M-1}
    \end{pmatrix},~
    \mathbf{\tilde{H}}= \begin{pmatrix} 
	\tilde{\mathscr{B}}_1 & \tilde{\mathscr{C}}_2 &  &  & \\
	\tilde{\mathscr{A}}_1 & \tilde{\mathscr{B}}_2 & \tilde{\mathscr{C}}_3 &  &  \\
	& \ddots & \ddots & \ddots &\\
	&  & \tilde{\mathscr{A}}_{M-3} & \tilde{\mathscr{B}}_{M-2} & \tilde{\mathscr{C}}_{M-1}\\
	&  &  & \tilde{\mathscr{A}}_{M-2} & \tilde{\mathscr{B}}_{M-1}
    \end{pmatrix}.
\end{equation*}
The invertibility of the coefficient matrix $\mathbf{H}$ is ensured by the assumption that the coefficient function $p(x)$ satisfies the positivity condition $p(x)\geq p_0>0$. Additionally, the lower bound on the number of spatial partitions, denoted by $M$, is governed by the following nonrestrictive condition:
\begin{equation}\label{Nonrestrictive_assumption}
	\dfrac{L\|q\|_\infty}{2p_0}<M.
\end{equation} 
This condition is necessary to guarantee the invertibility of $\mathbf{H}$ in the proposed framework. The computational procedure for solving TFIPDEs (\ref{Pdoc1_1}) when $\Omega\subset\mathbb{R}$ is described in Algorithm \ref{Pdoc1_Alg1}.
 %%%%%%%%%%%%%%%%%%%%%%%%%%%%%%%%%%%%%%%%%%%%%%%%%%%%%%%%%%%%%%%%%%%%%%%%%%%%
 
\begin{algorithm}[ht]
{\small
\caption{ALGORITHM FOR THE SOLUTION OF (\ref{Pdoc1_1}) WHEN $\Omega\subset\mathbb{R}$}\label{Pdoc1_Alg1}
\begin{algorithmic}[1]
    \State \textbf{Input 1:} $\alpha$ \Comment{Order of the fractional operator}
    \State \textbf{Input 2:} $N$, $M$ \Comment{Number of mesh interval towards time and space}
    \State \textbf{Input 3:} $g,~h_1,~h_2$ \Comment{Initial and boundary conditions}
    \State Compute $\sigma=1-(\alpha/2)$ and the grading parameter $\nu$
    \State Compute $\Delta x=L/M$ and $x_m=m*\Delta x$ for $m=0,1,\ldots,M$
    \State Compute $t_n=T*(n/N)^\nu$ for $n=0,1,\ldots,N$
    \State Compute $p,q,r,f$, and $\mathcal{K}$  
    \State Compute $\Lambda_0^0$ and $\Lambda_n^{j}$ for $n=1,2,\ldots,N-1$, $j=0,1,\ldots,n$ given in (\ref{Pdoc1_2})
    \For{$n=0,1,\ldots,N-1$}
        \State Compute $\mathbf{H}$ and $\mathbf{\tilde{H}}$ 
        \State Compute the vector $\pmb{\mathscr{F}}^n$ using the formula given in (\ref{Pdoc1_20})
        \State Compute $\boldsymbol{\mathcal{U}}^{n+1}=\mathbf{H}^{-1}(\mathbf{\tilde{H}}\boldsymbol{\mathcal{U}}^{n}+\pmb{\mathscr{F}}^n)$
    \EndFor
    \State \textbf{Output:} Obtained discrete solution $\mathcal{U}_m^n$ for $m=1,2,\ldots,M-1,~n=1,2,\ldots,N$
\end{algorithmic}}
\end{algorithm}
%%%%%%%%%%%%%%%%%%%%%%%%%%%%%%%%%%%%%%%%%%%%%%%%%%%%%%%%%%%%%%%%%%%%%%%%%%%%%%%%

%%%%%%%%%%%%%%%%%%%%%%%%%%%%%%%%%%%%%%%%%%%%%%%%%%%%%%%%%%%%%%%%%%%%%%%%%%%%%%%%%%%%%%%%%%%%%%%%%%%%%%%%%%%%%%%%%%%%%%%%%%%%%%%%%%%%%%%%%%%%%%%%%%%%%
\subsection{Approximation of the spatial derivatives for $\Omega\subset\mathbb{R}^2$}
In this section, we discuss the approximation scheme for the given integro-PDEs (\ref{Pdoc1_1}) when $\Omega\subset\mathbb{R}^2$. The multi-dimensional Haar wavelet decomposition is used to approximate the spatial derivatives. In this case, let us define the initial and boundary conditions in explicit form as:
\begin{align*}
    \mathcal{U}(x,y,0)&=g(x,y)~\mbox{for}~(x,y)\in\overline{\Omega},\\
    \mathcal{U}(0,y,t)&=h_1(y,t),~\mathcal{U}(1,y,t)=h_2(y,t)~\mbox{for}~(y,t)\in\overline{\Omega}_y\times\overline{\Omega}_t,\\
    \mathcal{U}(x,0,t)&=h_3(x,t),~\mathcal{U}(x,1,t)=h_4(x,t)~\mbox{for}~(x,t)\in\overline{\Omega}_x\times\overline{\Omega}_t,
\end{align*}
where $\Omega=\Omega_x\times\Omega_y$ with $\Omega_x=\Omega_y=(0,1)$. Using (\ref{Pdoc1_2}) and (\ref{Pdoc1_4}) into (\ref{Pdoc1_62}), one has the semi-discrete form of the proposed problem (\ref{Pdoc1_1}) as:
\begin{equation}\label{Pdoc1_25}
    \left\{
    \begin{array}{ll}
    \Big(\Lambda_n^{n}+\dfrac{\mu\sigma^2\tau_{n+1}}{2}\mathcal{K}(x,y,0)\Big)\mathcal{U}^{n+1}(x,y)+\mathcal{L}\mathcal{U}^{n+\sigma}(x,y)=F^n(x,y),\\[6pt]
    \mathcal{U}^{n+\sigma}(0,y)=h_1(y,t_{n+\sigma}),~\mathcal{U}^{n+\sigma}(1,y)=h_2(y,t_{n+\sigma}),\\
    \mathcal{U}^{n+\sigma}(x,0)=h_3(x,t_{n+\sigma}),~\mathcal{U}^{n+\sigma}(x,1)=h_4(x,t_{n+\sigma}),
    \end{array}\right.
\end{equation}
for $n=0,1,\ldots, N-1 $, where $\mathcal{L}\mathcal{U}^{n+\sigma}(x,y)$ is given by:
\begin{align*}
    \mathcal{L} \mathcal{U}^{n+\sigma}(x,y):=-p(x,y)\Delta \mathcal{U}^{n+\sigma}(x,y)+\boldsymbol{q}(x,y)\cdot\nabla \mathcal{U}^{n+\sigma}(x,y)+r(x,y)\mathcal{U}^{n+\sigma}(x,y),
\end{align*}
and $F^0(x,y)=\Lambda_n^0\mathcal{U}^{0}(x,y)=\Lambda_n^0g(x,y)$, and for $n=1,2,\ldots, N-1$, $F^n(x,y)$ can be expressed as:
\begin{align*}
     F^n(x,y)=&\Lambda_n^0g(x,y)+\sum_{j=1}^n (\Lambda_n^{j}-\Lambda_n^{j-1})\mathcal{U}^{j}(x,y)-\mu\Big(\dfrac{\sigma\tau_{n+1}}{2}\mathcal{K}(x,y,t_{n+\sigma}-t_n)+\dfrac{\sigma(1-\sigma)\tau_{n+1}}{2}\mathcal{K}(x,y,0)\Big)\mathcal{U}^n(x,y)\\
     &-\mu\sum_{j=0}^{n-1}\dfrac{\tau_{j+1}}{2}\big[\mathcal{K}(x,y,t_{n+\sigma}-t_{j+1})\mathcal{U}^{j+1}(x,y)+\mathcal{K}(x,y,t_{n+\sigma}-t_j)\mathcal{U}^j(x,y)\big]+f(x,y,t_{n+\sigma}).
\end{align*}
It is important to note that the discretization given in (\ref{Pdoc1_25}) results in a truncation error of the form:
\begin{equation}\label{Pdoc1_61}
    \widehat{\mathscr{R}}^{n+\sigma}={}^{(1)}\mathscr{R}^{n+\sigma}+{}^{(2)}\mathscr{R}^{n+\sigma},
\end{equation}
where ${}^{(1)}\mathscr{R}^{n+\sigma}$ and ${}^{(2)}\mathscr{R}^{n+\sigma}$ are defined in (\ref{Pdoc1_16}).
%%%%%%%%%%%%%%%%%%%%%%%%%%%%%%%%%%%%%%%%%%%%%%%%%%%%%%%%%%%%%%%%%%%%%%%%%%%%%
\subsubsection{Haar wavelets and its properties}\label{subsec_Haar}
The Haar wavelet \cite{HaarZur1910} consists of a sequence of square-shaped functions that together form a wavelet family. The mother wavelet function $\psi(x)$, and it's scaling function $\phi(x)$ are defined as:
\begin{equation*}
    \begin{array}{ll}
         \psi(x)=\begin{cases}
                 1,~&\mbox{for }t\in\Big[0,\dfrac{1}{2}\Big),\\[4pt]
                 -1,~&\mbox{for }t\in\Big[\dfrac{1}{2},1\Big),\\[4pt]
                 0,~&\mbox{elsewhere},
                 \end{cases}
        ~~\mbox{and}~~
        \phi(x)=\begin{cases}
                 1,~&\mbox{for }t\in[0,1),\\[4pt]
                 0,~&\mbox{elsewhere}.
                 \end{cases}
    \end{array}
\end{equation*}
The corresponding basis functions can be generated using the dilation parameter $j$ and translation parameter $k$ as $\{\phi_k^j(x)=2^{j/2}\phi(2^jx-k)\}_{j,k\in\mathbb{Z}}$, $\{\psi_k^j(x)=2^{j/2}\psi(2^jx-k)\}_{j,k\in\mathbb{Z}}$ to form an orthonormal subfamily of the Hilbert space $L^2(\mathbb{R})$. Let us choose $i=m+k+1$, where $m=2^j$ for $j=0,1,\ldots,J$ and $k=0,1,\ldots,m-1$. Here $J$ is called the maximum level of resolution. Then, for $x\in[0,1]$, the $i^{\mbox{th}}$ Haar wavelet is defined as:
\begin{equation}\label{Pdoc1_36}
    \begin{array}{ll}
         \psi_i(x)=\begin{cases}
                 1,~&\mbox{for }x\in\Big[\zeta_1(i),\zeta_2(i)\Big),\\[4pt]
                 -1,~&\mbox{for }x\in\Big[\zeta_2(i),\zeta_3(i)\Big),\\[4pt]
                 0,~&\mbox{elsewhere},
                 \end{cases}
        ~~i=2,3,\ldots,
    \end{array}
\end{equation}
where $\zeta_1(i)=\dfrac{k}{m},~\zeta_2(i)=\dfrac{k+1/2}{m},~\zeta_3(i)=\dfrac{k+1}{m}$.  Further, the $n^{\mbox{th}}$, $n\in\mathbb{N}$ integration of the Haar wavelets can be written as:
\begin{equation}\label{Pdoc1_38}
    \mathcal{R}_{n,1}(x)=\dfrac{x^n}{n!},\text{~ for all ~}x\in[0,1),
\end{equation} 
and for $i=2,3,\ldots$,
\begin{equation}\label{Pdoc1_39}
    \begin{array}{ll}
         \mathcal{R}_{n,i}(x)=\dfrac{1}{n!}\begin{cases}
                 0,~&0\leq x<\zeta_1(i),\\
                 (x-\zeta_1(i))^n,~&\zeta_1(i)\leq x<\zeta_2(i),\\
                 (x-\zeta_1(i))^n-2(x-\zeta_2(i))^n,~&\zeta_2(i)\leq x<\zeta_3(i),\\
                 (x-\zeta_1(i))^n-2(x-\zeta_2(i))^n+(x-\zeta_3(i))^n,~&\zeta_3(i)\leq x<1.
                 \end{cases}
    \end{array}
\end{equation}
The following lemma shows the upper bounds of the integrands of the Haar wavelets that help to obtain the required error bounds described in Theorem \ref{Pdoc1_thm6}.
\begin{lemma}\label{Pdoc1_lem5}
(See Theorem 1 of \cite{WichailukkanaAconvergence2016}) The $n^{\mbox{th}}$ $(n\in\mathbb{N})$ integration of the Haar wavelets defined in (\ref{Pdoc1_38}) and (\ref{Pdoc1_39}) satisfies the following bounds.
\begin{equation*}
    \left\{
        \begin{array}{l}
             \Big\lvert\mathcal{R}_{n,1}(t)\Big\rvert\leq \dfrac{1}{n!},~n\geq1,  \\[6pt]
             \Big\lvert\mathcal{R}_{1,i}(t)\Big\rvert\leq \dfrac{1}{2^{j+1}},~i\geq2,\\[6pt]
             \Big\lvert\mathcal{R}_{n,i}(t)\Big\rvert\leq \mathscr{C}(n)\Big(\dfrac{1}{2^{j+1}}\Big)^2,~n\geq2,~i\geq2,
        \end{array}\right.
\end{equation*}
where $\mathscr{C}(n)=\dfrac{8}{3(\lfloor (n+1)/2\rfloor !)^2}$.
\end{lemma}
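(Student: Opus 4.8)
The plan is to reproduce the argument of \cite{WichailukkanaAconvergence2016} by treating the three estimates separately and exploiting the elementary geometry of the support recorded in (\ref{Pdoc1_36})--(\ref{Pdoc1_39}): writing $\eta:=\frac{1}{2^{j+1}}$, one has $\zeta_2(i)-\zeta_1(i)=\zeta_3(i)-\zeta_2(i)=\eta$ and $\zeta_3(i)-\zeta_1(i)=2\eta$. The first bound is immediate, since $t\in[0,1)$ forces $t^n\leq 1$ and hence $|\mathcal{R}_{n,1}(t)|=t^n/n!\leq 1/n!$. The second bound follows by reading off the piecewise-linear profile of $\mathcal{R}_{1,i}$ from (\ref{Pdoc1_39}): it vanishes off $[\zeta_1(i),\zeta_3(i))$, rises linearly to the value $\zeta_2(i)-\zeta_1(i)=\eta$ at $x=\zeta_2(i)$, and returns linearly to $0$ at $\zeta_3(i)$, so its maximum modulus is exactly $\eta$.

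For the principal case $n\geq 2$, $i\geq 2$, I would start from the Cauchy repeated-integration formula $\mathcal{R}_{n,i}(x)=\frac{1}{(n-1)!}\int_{0}^{x}(x-s)^{n-1}\psi_i(s)\,ds$, which is consistent with (\ref{Pdoc1_38})--(\ref{Pdoc1_39}) and with $\mathcal{R}_{n,i}'=\mathcal{R}_{n-1,i}$, and split the analysis over the three active subintervals. On the tail $x\in[\zeta_3(i),1)$ the whole wavelet lies inside the range of integration, so the vanishing zeroth moment $\int\psi_i=0$ lets me subtract the midpoint value and write $\mathcal{R}_{n,i}(x)=\frac{1}{(n-1)!}\int_{\zeta_1(i)}^{\zeta_3(i)}\big[(x-s)^{n-1}-(x-\zeta_2(i))^{n-1}\big]\psi_i(s)\,ds$; equivalently, the bracket in (\ref{Pdoc1_39}) is the scaled second central difference of $(x-\cdot)^n$ with step $\eta$, hence equals $\eta^2$ times a second derivative by the mean-value form of finite differences. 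Either route produces the crucial factor $\eta^2=\big(\tfrac{1}{2^{j+1}}\big)^2$, the residual integrand being controlled by $|s-\zeta_2(i)|\leq\eta$ and by $\sup_{s}(x-s)^{n-2}\leq 1$ on $[0,1)$.

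It then remains to dispose of the two interior pieces and to pin down the constant. On $[\zeta_1(i),\zeta_2(i))$ one has $\mathcal{R}_{n,i}(x)=(x-\zeta_1(i))^n/n!\leq \eta^n/n!\leq\eta^2/n!$, since $\eta<1$ and $n\geq 2$, while on $[\zeta_2(i),\zeta_3(i))$ the nonnegativity and monotonicity of $\mathcal{R}_{n,i}$ (which follow inductively from $\mathcal{R}_{n,i}'=\mathcal{R}_{n-1,i}\geq 0$) reduce the estimate to the endpoint value at $\zeta_3(i)$, again of order $\eta^n\leq\eta^2$. Collecting the three contributions yields a bound of the claimed form $\mathscr{C}(n)\big(\tfrac{1}{2^{j+1}}\big)^2$. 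I expect the main obstacle to be the bookkeeping that upgrades these separate $O(\eta^2)$ estimates to the precise constant $\mathscr{C}(n)=\frac{8}{3(\lfloor(n+1)/2\rfloor!)^2}$: this requires maximizing each polynomial piece exactly rather than through the crude bounds above, and the factorial $(\lfloor(n+1)/2\rfloor!)^2$ emerges naturally from an induction on $n$ carried out in pairs of integrations, each pass over a half-support of length $\eta$ contributing one factor $\eta$ together with one reciprocal factorial, which is exactly the computation performed in \cite{WichailukkanaAconvergence2016}.
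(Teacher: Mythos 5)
The paper does not actually prove this lemma: it is quoted verbatim as Theorem~1 of \cite{WichailukkanaAconvergence2016}, with no argument supplied. Your proposal therefore goes beyond what the paper offers, and its overall strategy is sound: the first two bounds are verified exactly as you describe (the triangle profile of $\mathcal{R}_{1,i}$ peaks at $\zeta_2(i)-\zeta_1(i)=2^{-(j+1)}$, and the three expressions in (\ref{Pdoc1_39}) do telescope to $0$ on $[\zeta_3(i),1)$ when $n=1$), and for $n\ge 2$ the identification of the tail expression $\frac{1}{n!}\big[(x-\zeta_1)^n-2(x-\zeta_2)^n+(x-\zeta_3)^n\big]$ as a second central difference with step $\eta=2^{-(j+1)}$ is exactly the right mechanism for extracting the factor $\eta^2$; the mean-value form gives $\lvert\mathcal{R}_{n,i}(x)\rvert\le \eta^2 n(n-1)\sup_\theta \theta^{n-2}/n! \le \eta^2/(n-2)!$ there, and your treatment of the two interior pieces via $\eta\le 1/2$ and the monotonicity coming from $\mathcal{R}_{n,i}'=\mathcal{R}_{n-1,i}\ge 0$ is also correct.

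The one caveat is the constant, and you flag it yourself, but it is worth being precise about what your sketch does and does not deliver. The crude tail bound $\eta^2/(n-2)!$ is \emph{not} always dominated by $\mathscr{C}(n)\eta^2$: for $n=3$ one has $1/(n-2)!=1$ while $\mathscr{C}(3)=2/3$, and for $n=5$ one has $1/6$ versus $\mathscr{C}(5)=2/27$. So as written your argument proves the three inequalities with \emph{some} constant depending only on $n$ (which is all that is used downstream in Theorem \ref{Pdoc1_thm6}, where only $\mathscr{C}(2)$ enters), but it does not establish the specific value $\mathscr{C}(n)=\frac{8}{3(\lfloor(n+1)/2\rfloor!)^2}$; that requires the paired-integration induction you allude to, which is carried out in the cited reference rather than reconstructed here. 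If you want the lemma exactly as stated, you either need to complete that induction or, as the paper does, simply cite it.
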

A real-valued function $z(x,y)\in L^2(\overline{\Omega})$ can be decomposed by using two-dimensional Haar wavelets as:
\begin{align*}
  z(x,y)= D_{1,1}\phi(x)\phi(y)+\sum_{i_2=2}^{\infty}D_{1,i_2}\phi(x)\psi_{i_2}(y)+\sum_{i_1=2}^{\infty}D_{i_1,1}\psi_{i_1}(x)\phi(y)+\sum_{i_1=2}^{\infty}\sum_{i_2=2}^{\infty}D_{i_1,i_2}\psi_{i_1}(x)\psi_{i_2}(y).
\end{align*}
Its numerical approximation is given by
\begin{align}
  \nonumber z(x,y)\approx z_{M_1M_2}(x,y):= & D_{1,1}\phi(x)\phi(y)+\sum_{i_2=2}^{2M_2}D_{1,i_2}\phi(x)\psi_{i_2}(y)+\sum_{i_1=2}^{2M_1}D_{i_1,1}\psi_{i_1}(x)\phi(y)+\sum_{i_1=2}^{2M_1}\sum_{i_2=2}^{2M_2}D_{i_1,i_2}\psi_{i_1}(x)\psi_{i_2}(y)\\
  &=\boldsymbol{\mathcal{H}}(x)^T\mathbf{D}\boldsymbol{\mathcal{H}}(y),\label{Pdoc1_42}
\end{align}
where $\boldsymbol{\mathcal{H}}(x)=(\phi(x),\psi_2(x),\cdots,\psi_{2M_1}(x))^T$, $\boldsymbol{\mathcal{H}}(y)=(\phi(y),\psi_2(y),\cdots,\psi_{2M_2}(y))^T$, and $\mathbf{D}$ is the 
matrix of unknown coefficients of order $(2M_1\times 2M_2)$, given by 
\begin{equation}\label{Pdoc1_40}
    \mathbf{D}=
    \begin{pmatrix} 
	  D_{1,1} & D_{1,2} & \cdots   &D_{1,2M_2} \\
		D_{2,1} & D_{2,2} & \cdots   &D_{2,2M_2}  \\
		\vdots& \vdots & \ddots &\vdots\\
		D_{2M_1,1} & D_{2M_1,2} & \cdots   &D_{2M_1,2M_2}
	\end{pmatrix}.
\end{equation}
Here, we take
$i_1=m_1+k_1+1$, with $m_1=2^{j_1}$ for $j_1=0,1,\ldots,J_1$ and $k_1=0,1,\ldots,m_1-1$. 
$i_2=m_2+k_2+1$, with $m_2=2^{j_2}$ for $j_2=0,1,\ldots,J_2$ and $k_2=0,1,\ldots,m_2-1$.
$J_1$ and $J_2$ represent the maximum level of resolution in the direction of $x$ and $y$, respectively. 
Further, $M_1=2^{J_1},~M_2=2^{J_2}$. 
In order to find the unknown matrix, we use the two-dimensional collocation points 
$\{(x_{l_1},y_{l_2})\}_{l_1=1,l_2=1}^{2M_1,2M_2}$ defined as:
\begin{equation}\label{Pdoc1_33}
    x_{l_1}=\dfrac{l_1-1/2}{2M_1},~y_{l_2}=\dfrac{l_2-1/2}{2M_2},~l_1=1,2,\ldots,2M_1,~l_2=1,2,\ldots,2M_2.
\end{equation}

%%%%%%%%%%%%%%%%%%%%%%%%%%%%%%%%%%%%%%%%%%%%%%%%%%%%%%%%%%%%%%%%%%%%%%%%%%%%%%%
\subsubsection{Approximation of the spatial derivatives based on two-dimensional Haar wavelets}
Let us assume $\mathscr{Q}(x,y)=\mathcal{U}_{xxyy}^{n+\sigma}(x,y)\in L^2(\overline{\Omega})$. Then, it can be decomposed as a finite sum of the two-dimensional Haar wavelets as:
\begin{align}
  \mathcal{U}_{xxyy}^{n+\sigma}(x,y)\approx \boldsymbol{\mathcal{H}}(x)^T\mathbf{D}\boldsymbol{\mathcal{H}}(y).\label{Pdoc1_26}
\end{align}
Integrating \eqref{Pdoc1_26} with respect to $y$ twice and using the boundary conditions given in \eqref{Pdoc1_25}, we obtain:
\begin{align}
  \mathcal{U}_{xx}^{n+\sigma}(x,y)\approx \boldsymbol{\mathcal{H}}(x)^T\mathbf{D}(\boldsymbol{\mathcal{R}}_2(y)-y\boldsymbol{\mathcal{R}}_2(1))+(1-y)\dfrac{\partial^2h_3}{\partial x^2}(x,t_{n+\sigma})+y\dfrac{\partial^2h_4}{\partial x^2}(x,t_{n+\sigma}),\label{Pdoc1_27}
\end{align}
where $\boldsymbol{\mathcal{R}}_2(y):=(\mathcal{R}_{2,1}(y),\mathcal{R}_{2,2}(y),\cdots,\mathcal{R}_{2,2M_2}(y))^T$. Again, integrating \eqref{Pdoc1_26} with respect to $x$ twice and then, using the boundary conditions, we yield
\begin{align}
  \mathcal{U}_{yy}^{n+\sigma}(x,y)\approx (\boldsymbol{\mathcal{R}}_2(x)-x\boldsymbol{\mathcal{R}}_2(1))^T\mathbf{D}\boldsymbol{\mathcal{H}}(y)+(1-x)\dfrac{\partial^2h_1}{\partial y^2}(y,t_{n+\sigma})+x\dfrac{\partial^2h_2}{\partial y^2}(y,t_{n+\sigma}),\label{Pdoc1_28}
\end{align}
with $\boldsymbol{\mathcal{R}}_2(x):=(\mathcal{R}_{2,1}(x),\mathcal{R}_{2,2}(x),\cdots,\mathcal{R}_{2,2M_1}(x))^T$. Further, the integration of \eqref{Pdoc1_27} with respect to $x$ twice, and the integration of \eqref{Pdoc1_28} with respect to $y$ once yields the following:
\begin{align}
  \nonumber \mathcal{U}_{x}^{n+\sigma}(x,y)\approx& (\boldsymbol{\mathcal{R}}_1(x)-\boldsymbol{\mathcal{R}}_2(1))^T\mathbf{D}(\boldsymbol{\mathcal{R}}_2(y)-y\boldsymbol{\mathcal{R}}_2(1))-h_1(y,t_{n+\sigma})+h_2(y,t_{n+\sigma})\\
  \nonumber &+(1-y)\Big(\dfrac{\partial h_3}{\partial x}(x,t_{n+\sigma})-h_3(1,t_{n+\sigma})+h_3(0,t_{n+\sigma})\Big)\\
  &+y\Big(\dfrac{\partial h_4}{\partial x}(x,t_{n+\sigma})-h_4(1,t_{n+\sigma})+h_4(0,t_{n+\sigma})\Big),\label{Pdoc1_29}\\[6pt]
  \nonumber \mathcal{U}_{y}^{n+\sigma}(x,y)\approx& (\boldsymbol{\mathcal{R}}_2(x)-x\boldsymbol{\mathcal{R}}_2(1))^T\mathbf{D}(\boldsymbol{\mathcal{R}}_1(y)-\boldsymbol{\mathcal{R}}_2(1))-h_3(x,t_{n+\sigma})+h_4(x,t_{n+\sigma})\\
  \nonumber &+(1-x)\Big(\dfrac{\partial h_1}{\partial y}(y,t_{n+\sigma})-h_1(1,t_{n+\sigma})+h_1(0,t_{n+\sigma})\Big)\\
  &+x\Big(\dfrac{\partial h_2}{\partial y}(y,t_{n+\sigma})-h_2(1,t_{n+\sigma})+h_2(0,t_{n+\sigma})\Big),\label{Pdoc1_30}\\[6pt]
  \nonumber \mathcal{U}^{n+\sigma}(x,y)\approx& (\boldsymbol{\mathcal{R}}_2(x)-x\boldsymbol{\mathcal{R}}_2(1))^T\mathbf{D}(\boldsymbol{\mathcal{R}}_2(y)-y\boldsymbol{\mathcal{R}}_2(1))+(1-x)h_1(y,t_{n+\sigma})+xh_2(y,t_{n+\sigma})\\
  \nonumber &+(1-y)\big(h_3(x,t_{n+\sigma})-h_3(0,t_{n+\sigma})-xh_3(1,t_{n+\sigma})+xh_3(0,t_{n+\sigma})\big)\\
  &+y\big(h_4(x,t_{n+\sigma})-h_4(0,t_{n+\sigma})-xh_4(1,t_{n+\sigma})+xh_4(0,t_{n+\sigma})\big),\label{Pdoc1_31}
\end{align}
where 
$\boldsymbol{\mathcal{R}}_1(x):=(\mathcal{R}_{1,1}(x),\mathcal{R}_{1,2}(x),\cdots,\mathcal{R}_{1,2M_1}(x))^T,~\boldsymbol{\mathcal{R}}_1(y):=(\mathcal{R}_{1,1}(y),\mathcal{R}_{1,2}(y),\cdots,\mathcal{R}_{1,2M_2}(y))^T$. Equation (\ref{Pdoc1_31}) is further approximated as 
\begin{equation}\label{Pdoc1_57}
    \mathcal{U}^{n+\sigma}(x,y)\approx \sigma\mathcal{U}^{n+1}(x,y)+(1-\sigma)\mathcal{U}^n(x,y).
\end{equation}
Hence, we have
\begin{align}
    \nonumber \mathcal{U}^{n+1}(x,y)\approx\mathcal{U}_{M_1M_2}^{n+1}(x,y)=& \dfrac{1}{\sigma}\Big[(\boldsymbol{\mathcal{R}}_2(x)-x\boldsymbol{\mathcal{R}}_2(1))^T\mathbf{D}(\boldsymbol{\mathcal{R}}_2(y)-y\boldsymbol{\mathcal{R}}_2(1))+(1-x)h_1(y,t_{n+\sigma})+xh_2(y,t_{n+\sigma})\\
  \nonumber &+(1-y)\big(h_3(x,t_{n+\sigma})-h_3(0,t_{n+\sigma})-xh_3(1,t_{n+\sigma})+xh_3(0,t_{n+\sigma})\big)\\
  &+y\big(h_4(x,t_{n+\sigma})-h_4(0,t_{n+\sigma})-xh_4(1,t_{n+\sigma})+xh_4(0,t_{n+\sigma})\big)\Big]-\Big(\dfrac{1-\sigma}{\sigma}\Big)\mathcal{U}^n(x,y).\label{Pdoc1_32}
\end{align}
Substituting \eqref{Pdoc1_27}-\eqref{Pdoc1_32} into \eqref{Pdoc1_25}, one can have a linear system at each time level and that can be solved by using the collocation points defined in \eqref{Pdoc1_33}. Then, after substituting the wavelet coefficients into \eqref{Pdoc1_32}, we obtain the desired solution of (\ref{Pdoc1_1}) at each time level. The algorithm for numerical computation of the proposed wavelet-based finite difference approximation is described in Algorithm \ref{Pdoc1_Alg2}.

%%%%%%%%%%%%%%%%%%%%%%%%%%%%%%%%%%%%%%%%%%%%%%%%%%%%%%%%%%%%%%%%%%%%%%%%%%%%%
\subsubsection{Convergence of the wavelet approximation}
In this segment, our objective is to show the convergence of the wavelet approximation and to derive an estimate for the error bound based on the $L^2$-norm. The orthogonality property presented below will be taken into account in establishing this estimate.
 \begin{equation}\label{Pdoc1_41}
     \int_0^1\psi_{i_1}(x)\psi_{i_1'}(x)~dx=\begin{cases}
                                            \dfrac{1}{2^{j_1}},~&\mbox{if }i_1=i_1',\\
                                            0,~&\mbox{if }i_1\neq i_1'
                                            \end{cases},~
    \int_0^1\psi_{i_2}(y)\psi_{i_2'}(y)~dy=\begin{cases}
                                            \dfrac{1}{2^{j_2}},~&\mbox{if }i_2=i_2',\\
                                            0,~&\mbox{if }i_2\neq i_2'
                                            \end{cases}.
 \end{equation}
The unknown coefficients in the wavelet expansion of $\mathscr{Q}(x,y)$, as expressed in (\ref{Pdoc1_26}), can be determined by using the orthogonality properties given in (\ref{Pdoc1_41}) as: 
\[D_{i_1,i_2}:=\displaystyle{\int_0^1\int_0^1}\mathscr{Q}(x,y)\psi_{i_1}(x)\psi_{i_2}(y)dxdy\]
with $\psi_{i_1}(x)=\phi(x),~\psi_{i_2}(y)=\phi(y)$ when $i_1=1,~i_2=1$, respectively. The following lemma demonstrates the bounds for the wavelet coefficients that will be used later in Theorem \ref{Pdoc1_thm6} to obtain the error bounds for the wavelet-based numerical solutions for two-dimensional TFIPDEs having time singularity.  
\begin{lemma}\label{Pdoc1_lem6}
Let $\mathscr{Q}(x,y)\in L^2(\overline{\Omega})$ be continuous such that $\lvert \mathscr{Q}\rvert,~\lvert \mathscr{Q}_x\rvert,~\lvert \mathscr{Q}_y\rvert,~\lvert \mathscr{Q}_{xy}\rvert\leq\mathscr{M}$, for all $(x,y)\in\overline{\Omega}$, where $\mathscr{M}$ is some positive constant. Then, the wavelet coefficients $\{D_{i_1,i_2}\}_{i_1=1,i_2=1}^{\infty,\infty}$ satisfy the following bounds: 
\begin{equation*}
    \left\{
    \begin{array}{ll}
        \Big\lvert D_{1,1}\Big\rvert\leq \mathscr{M},  &\\[4pt]
        \Big\lvert D_{1,i_2}\Big\rvert\leq \dfrac{\mathscr{M}}{2^{j_2+1}},~&\mbox{for}~i_2\geq 2,\\[4pt]
        \Big\lvert D_{i_1,1}\Big\rvert\leq \dfrac{\mathscr{M}}{2^{j_1+1}},~&\mbox{for}~i_1\geq 2,\\[4pt]
        \Big\lvert D_{i_1,i_2}\Big\rvert\leq \dfrac{\mathscr{M}}{(2^{j_1+1})(2^{j_2+1})},~&\mbox{for}~i_1\geq 2,~i_2\geq 2,
    \end{array}\right.
\end{equation*}
\end{lemma}
\begin{proof}
Applying the mean value theorem for integrals, we have
\begin{align*}
    \nonumber \Big\lvert D_{1,1}\Big\rvert=&\Big\lvert\int_0^1\int_0^1\mathscr{Q}(x,y)\phi(x)\phi(y)dxdy\Big\rvert=\Big\lvert\int_0^1\int_0^1\mathscr{Q}(x,y)dxdy\Big\rvert=\Big\lvert\int_0^1\mathscr{Q}(\rho_1,y)dy\Big\rvert\\
    &=\Big\lvert \mathscr{Q}(\rho_1,\rho_2)\int_0^1dy\Big\rvert=\Big\lvert \mathscr{Q}(\rho_1,\rho_2)\Big\rvert\leq\mathscr{M},~\mbox{for some}~\rho_1,~\rho_2\in(0,1).
\end{align*}
For $i_2\geq2$, following (\ref{Pdoc1_36}), and the mean value theorem for integrals yields 
\begin{align*}
    \nonumber D_{1,i_2}=&\int_0^1\int_0^1\mathscr{Q}(x,y)\phi(x)\psi_{i_2}(y)dxdy=\int_0^1\int_0^1\mathscr{Q}(x,y)\psi_{i_2}(y)dxdy=\int_0^1\mathscr{Q}(\rho_3,y)\psi_{i_2}(y)dy\\
    \nonumber=&\int_{k_2/2^{j_2}}^{(k_2+\frac{1}{2})/2^{j_2}}\mathscr{Q}(\rho_3,y)~dy-\int_{(k_2+\frac{1}{2})/2^{j_2}}^{(k_2+1)/2^{j_2}}\mathscr{Q}(\rho_3,y)~dy=\dfrac{1}{2^{j_2+1}}\big(\mathscr{Q}(\rho_3,\rho_4)-\mathscr{Q}(\rho_3,\rho_5)\big)\\
    \nonumber=&\dfrac{1}{2^{j_2+1}}(\rho_4-\rho_5)\dfrac{\partial \mathscr{Q}}{\partial y}\Big\rvert_{x=\rho_3,y=\rho_6},
\end{align*}
for some $\rho_3\in(0,1)$, $\rho_4\in\Big(\dfrac{k_2}{2^{j_2}},\dfrac{k_2+\frac{1}{2}}{2^{j_2}}\Big)$, $\rho_5\in\Big(\dfrac{k_2+\frac{1}{2}}{2^{j_2}},\dfrac{k_2+1}{2^{j_2}}\Big)$, and $\rho_6\in\big(\min\{\rho_4,\rho_5\},\max\{\rho_4,\rho_5\}\big)$. Hence,
\begin{equation*}
    \Big\lvert D_{1,i_2}\Big\rvert=\Big\lvert\dfrac{1}{2^{j_2+1}}(\rho_4-\rho_5)\dfrac{\partial \mathscr{Q}}{\partial y}\Big\rvert_{x=\rho_3,y=\rho_6}\Big\rvert\leq \dfrac{1}{2^{j_2+1}}\lvert\rho_4-\rho_5\rvert\Big\lvert\dfrac{\partial \mathscr{Q}}{\partial y}\Big\rvert_{x=\rho_3,y=\rho_6}\Big\rvert \leq \dfrac{\mathscr{M}}{2^{j_2+1}}.
\end{equation*}
Using similar arguments, one can prove $\Big\lvert D_{i_1,1}\Big\rvert\leq \dfrac{\mathscr{M}}{2^{j_1+1}},~\mbox{for}~i_1\geq 2$. Finally, for $i_1\geq2,~i_2\geq2$, we have
\begin{align*}
    \nonumber D_{i_1,i_2}=&\int_0^1\int_0^1\mathscr{Q}(x,y)\psi_{i_1}(x)\psi_{i_2}(y)dxdy\\[6pt]
    =&\int_0^1\Big(\int_{k_1/2^{j_1}}^{(k_1+\frac{1}{2})/2^{j_1}}\mathscr{Q}(x,y)dx-\int_{(k_1+\frac{1}{2})/2^{j_1}}^{(k_1+1)/2^{j_1}}\mathscr{Q}(x,y)dx\Big)\psi_{i_2}(y)dy\\[6pt]
    \nonumber =&\dfrac{1}{2^{j_1+1}}\int_0^1\big(\mathscr{Q}(\rho_7,y)-\mathscr{Q}(\rho_8,y)\big)\psi_{i_2}(y)dy=\dfrac{(\rho_7-\rho_8)}{2^{j_1+1}}\int_0^1\dfrac{\partial \mathscr{Q}}{\partial x}(\rho_9,y)\psi_{i_2}(y)dy\\[6pt]
    \nonumber =&\dfrac{(\rho_7-\rho_8)}{2^{j_1+1}}\Big(\int_{k_2/2^{j_2}}^{(k_2+\frac{1}{2})/2^{j_2}}\dfrac{\partial \mathscr{Q}}{\partial x}(\rho_9,y)dy-\int_{(k_2+\frac{1}{2})/2^{j_2}}^{(k_2+1)/2^{j_2}}\dfrac{\partial \mathscr{Q}}{\partial x}(\rho_9,y)dy\Big)\\[6pt]
    \nonumber =&\dfrac{(\rho_7-\rho_8)}{(2^{j_1+1})(2^{j_2+1})}\Big(\dfrac{\partial \mathscr{Q}}{\partial x}(\rho_9,\rho_{10})-\dfrac{\partial \mathscr{Q}}{\partial x}(\rho_9,\rho_{11})\Big)=\dfrac{(\rho_7-\rho_8)(\rho_{10}-\rho_{11})}{(2^{j_1+1})(2^{j_2+1})}\dfrac{\partial^2\mathscr{Q}}{\partial y\partial x}\Big\rvert_{x=\rho_9,y=\rho_{12}},
\end{align*}
for some $\rho_7\in\Big(\dfrac{k_1}{2^{j_1}},\dfrac{k_1+\frac{1}{2}}{2^{j_1}}\Big)$, $\rho_8\in\Big(\dfrac{k_1+\frac{1}{2}}{2^{j_1}},\dfrac{k_1+1}{2^{j_1}}\Big)$, $\rho_9\in\big(\min\{\rho_7,\rho_8\},\max\{\rho_7,\rho_8\}\big)$, $\rho_{10}\in\Big(\dfrac{k_2}{2^{j_2}},\dfrac{k_2+\frac{1}{2}}{2^{j_2}}\Big)$, $\rho_{11}\in\Big(\dfrac{k_2+\frac{1}{2}}{2^{j_2}},\dfrac{k_2+1}{2^{j_2}}\Big)$, and $\rho_{12}\in\big(\min\{\rho_{10},\rho_{11}\},\max\{\rho_{10},\rho_{11}\}\big)$. Taking the modulus on both sides of the above equation, we obtain:
\begin{align*}
    \Big\lvert D_{i_1,i_2}\Big\rvert&=\Big\lvert\dfrac{(\rho_7-\rho_8)(\rho_{10}-\rho_{11})}{(2^{j_1+1})(2^{j_2+1})}\dfrac{\partial^2\mathscr{Q}}{\partial y\partial x}\Big\rvert_{x=\rho_9,y=\rho_{12}}\Big\rvert\leq\dfrac{\lvert\rho_7-\rho_8\rvert\lvert\rho_{10}-\rho_{11}\rvert}{(2^{j_1+1})(2^{j_2+1})}\Big\lvert\dfrac{\partial^2\mathscr{Q}}{\partial y\partial x}\Big\rvert_{x=\rho_9,y=\rho_{12}}\Big\rvert \\[6pt]
    &\leq \dfrac{\mathscr{M}}{(2^{j_1+1})(2^{j_2+1})}.
\end{align*}
This completes the proof.
\end{proof}
At each time level $t_{n+1},~n=0,1,\ldots,N-1$, the error and its $L^2$-norm based on multi-dimensional wavelet approximation can be defined as:
\[\Big\|\mathcal{U}^{n+1}(x,y)-\mathcal{U}_{M_1M_2}^{n+1}(x,y)\Big\|_{L^2(\overline{\Omega})}=\Bigg\{\int_0^1\int_0^1\big\lvert\mathcal{U}^{n+1}(x,y)-\mathcal{U}_{M_1M_2}^{n+1}(x,y)\big\rvert^2dxdy\Bigg\}^{1/2},\]
where
\begin{align}
    \nonumber \mathcal{U}^{n+1}(x,y)-\mathcal{U}_{M_1M_2}^{n+1}(x,y)=& \dfrac{1}{\sigma}\Big[\sum_{i_2=2M_2+1}^{\infty}D_{1,i_2}(\mathcal{R}_{2,1}(x)-x\mathcal{R}_{2,1}(1))(\mathcal{R}_{2,i_2}(y)-y\mathcal{R}_{2,i_2}(1))\\
    \nonumber &~+\sum_{i_1=2M_1+1}^{\infty}D_{i_1,1}(\mathcal{R}_{2,i_1}(x)-x\mathcal{R}_{2,i_1}(1))(\mathcal{R}_{2,1}(y)-y\mathcal{R}_{2,1}(1))\\
    &~+\sum_{i_1=2M_1+1}^{\infty}\sum_{i_2=2M_2+1}^{\infty}D_{i_1,i_2}(\mathcal{R}_{2,i_1}(x)-x\mathcal{R}_{2,i_1}(1))(\mathcal{R}_{2,i_2}(y)-y\mathcal{R}_{2,i_2}(1))\Big]+\mathcal{O}(N^{-2}).\label{Pdoc1_43}
\end{align}
The remainder term $\mathcal{O}(N^{-2})$ arises from the approximation presented in (\ref{Pdoc1_57}). A rigorous proof of this will be discussed later in Section \ref{sec_Convergence}. Below, we establish an upper bound for this term in the following theorem.
\begin{theorem}\label{Pdoc1_thm6}
Assume that $\mathscr{Q}(x,y)\in L^2(\overline{\Omega})$ be continuous such that $\lvert \mathscr{Q}\rvert,~\lvert\mathscr{Q}_x\rvert,~\lvert\mathscr{Q}_y\rvert,~\lvert\mathscr{Q}_{xy}\rvert\leq\mathscr{M}$, the numerical solution (\ref{Pdoc1_32}) obtained by the two-dimensional Haar wavelets converges. Furthermore, the $L^2$-norm of the error satisfies the following bound:
\[\Big\|\mathcal{U}^{n+1}(x,y)-\mathcal{U}_{M_1M_2}^{n+1}(x,y)\Big\|_{L^2(\overline{\Omega})}\leq C[\mathcal{M}^{-3}+N^{-2}],\]
where $\mathcal{M}=\min\{M_1,M_2\}$ with $M_1=2^{J_1},~M_2=2^{J_2}$; $J_1,~J_2$ are the maximum level of resolutions towards $x$ and $y$ direction, respectively. More precisely, $\Big\lvert\mathcal{U}^{n+1}(x,y)-\mathcal{U}_{M_1M_2}^{n+1}(x,y)\Big\rvert\rightarrow0$ as $M_1\rightarrow\infty,~M_2\rightarrow\infty$. 
\end{theorem}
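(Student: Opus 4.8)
The plan is to exploit the fact that the numerical solution \eqref{Pdoc1_44} is obtained from the exact representation \eqref{Pdoc1_43} merely by truncating the double wavelet series, so that the two expressions can differ only through the tail of that series. First I would subtract \eqref{Pdoc1_44} from \eqref{Pdoc1_43} and observe that the $D_{1,1}$ term, all the boundary contributions involving $h_1,\dots,h_4$, and the term $-\big(\tfrac{1-\sigma}{\sigma}\big)\mathcal{U}^n(x,y)$ are \emph{identical} in both and therefore cancel exactly. Writing $g_i(x):=\mathcal{R}_{2,i}(x)-x\,\mathcal{R}_{2,i}(1)$ (and analogously $g_i(y)$), one is left with
\[
\mathcal{U}^{n+1}(x,y)-\mathcal{U}_{M_1M_2}^{n+1}(x,y)=\frac{1}{\sigma}\Big[\sum_{i_2=2M_2+1}^{\infty}D_{1,i_2}\,g_{1}(x)g_{i_2}(y)+\sum_{i_1=2M_1+1}^{\infty}D_{i_1,1}\,g_{i_1}(x)g_{1}(y)+\mathcal{S}\Big],
\]
where $\mathcal{S}$ denotes the part of the double sum $\sum_{i_1\ge2}\sum_{i_2\ge2}$ that is not captured by the truncation $i_1\le 2M_1,\ i_2\le 2M_2$. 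Thus the whole estimate reduces to controlling the $L^2$-norm of this tail, uniformly in $n$.

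Next I would take the $L^2(\bar{\mathcal{G}})$-norm, apply the triangle inequality term by term, and use the tensor-product structure to factor $\|g_{i_1}(x)g_{i_2}(y)\|_{L^2(\bar{\mathcal{G}})}=\|g_{i_1}\|_{L^2([0,1])}\|g_{i_2}\|_{L^2([0,1])}$. Two ingredients then enter. On the coefficient side, Lemma \ref{Pdoc1_lem6}, applied to $\mathscr{Q}=\mathcal{U}_{xxyy}^{n+\sigma}$ under the hypothesis $\lvert\mathscr{Q}\rvert,\lvert\mathscr{Q}_x\rvert,\lvert\mathscr{Q}_y\rvert,\lvert\mathscr{Q}_{xy}\rvert\le\widetilde{\mathscr{M}}$, gives $\lvert D_{i_1,i_2}\rvert\le\widetilde{\mathscr{M}}/(2^{j_1+1}2^{j_2+1})$ together with the corresponding one-index bounds $\widetilde{\mathscr{M}}/2^{j_2+1}$ and $\widetilde{\mathscr{M}}/2^{j_1+1}$. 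On the integral side, Lemma \ref{Pdoc1_lem5} gives $\lvert\mathcal{R}_{2,i}(t)\rvert\le\mathscr{C}(2)\,(2^{-(j+1)})^2$ for $i\ge2$, so that $\|g_{i}\|_{L^2}\le\|g_i\|_\infty\le 2\mathscr{C}(2)\,2^{-2(j+1)}$, while for the scaling-function index $i=1$ one uses $g_1(x)=\tfrac12 x(x-1)$, which is bounded independently of the resolution. Substituting these bounds rewrites every tail term as an explicit power of $2$ in the resolution levels $j_1,j_2$.

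Finally I would sum the resulting estimates as geometric series in the resolution levels. Combining the coefficient decay $2^{-(j+1)}$ with the second-integration decay $2^{-2(j+1)}$ yields a per-level factor behaving like $2^{-3(j+1)}$ in each active direction; summing the geometric tails over $j_1>J_1$ and $j_2>J_2$ then produces a bound of the form $C\,2^{-3\min\{J_1,J_2\}}=C\,\widehat{M}^{-3}$, uniform in $n$, which is exactly the claimed estimate and in particular forces $\big\lvert\mathcal{U}^{n+1}-\mathcal{U}_{M_1M_2}^{n+1}\big\rvert\to0$ as $\widehat{M}\to\infty$. The main obstacle, and the step demanding the most care, is precisely this summation: one must handle the two single sums and the double sum $\mathcal{S}$ separately, keep track of how many wavelets are present at each level $j$ (namely $2^{j}$), and check that after multiplying the coefficient and integral bounds the geometric series in $j_1,j_2$ collapse to the stated order $\widehat{M}^{-3}$ rather than to a weaker rate. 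A secondary point to verify is that the smoothness assumption $\mathscr{Q}=\mathcal{U}_{xxyy}^{n+\sigma}\in L^2(\bar{\mathcal{G}})$ with $\mathscr{Q},\mathscr{Q}_x,\mathscr{Q}_y,\mathscr{Q}_{xy}$ bounded—guaranteed here by the regularity class assumed for $\mathcal{U}$—holds at every time level, so that Lemma \ref{Pdoc1_lem6} is legitimately applicable throughout the induction over $n$.
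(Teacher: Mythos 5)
Your proposal is correct and follows essentially the same route as the paper: cancel the boundary and $\mathcal{U}^n$ terms common to \eqref{Pdoc1_43} and \eqref{Pdoc1_44}, bound the surviving wavelet-series tail using the coefficient decay of Lemma \ref{Pdoc1_lem6} together with the integrated-wavelet bounds of Lemma \ref{Pdoc1_lem5}, and sum the resulting geometric series in the resolution levels to obtain $C\widehat{M}^{-3}$. The only cosmetic difference is that you apply the triangle inequality directly to the $L^2$-norm of the tail and factor the tensor-product norms, whereas the paper squares the tail and bounds the six resulting cross-term sums $\mathcal{I}_1,\ldots,\mathcal{I}_6$ by $C\widehat{M}^{-6}$ before taking the square root.
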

\begin{proof}
\begin{align}
\Big\|\mathcal{U}^{n+1}(x,y)-\mathcal{U}_{M_1M_2}^{n+1}(x,y)\Big\|_{L^2}^2&=\int_0^1\int_0^1\big\lvert\mathcal{U}^{n+1}(x,y)-\mathcal{U}_{M_1M_2}^{n+1}(x,y)\big\rvert^2dxdy\leq\dfrac{1}{\sigma^2}\sum_{k=1}^6\mathcal{I}_k+\mathcal{O}(N^{-4}),\label{Pdoc1_45}
\end{align}
where
{\small
\begin{align*}
\mathcal{I}_1=&\sum_{i_2=2M_2+1}^{\infty}\sum_{i_2'=2M_2+1}^{\infty}\big\lvert D_{1,i_2}\big\rvert\big\lvert D_{1,i_2'}\big\rvert\int_0^1\big\lvert\mathcal{R}_{2,1}(x)-x\mathcal{R}_{2,1}(1)\big\lvert^2dx\int_0^1\big\lvert\mathcal{R}_{2,i_2}(y)-y\mathcal{R}_{2,i_2}(1)\big\rvert\big\lvert\mathcal{R}_{2,i_2'}(y)-y\mathcal{R}_{2,i_2'}(1)\big\rvert dy,\\
\mathcal{I}_2=&\sum_{i_1=2M_1+1}^{\infty}\sum_{i_1'=2M_1+1}^{\infty}\big\lvert D_{i_1,1}\big\rvert\big\lvert D_{i_1',1}\big\rvert\int_0^1\big\lvert\mathcal{R}_{2,i_1}(x)-x\mathcal{R}_{2,i_1}(1)\big\rvert\big\lvert\mathcal{R}_{2,i_1'}(x)-x\mathcal{R}_{2,i_1'}(1)\big\rvert dx\int_0^1\big\lvert\mathcal{R}_{2,1}(y)-y\mathcal{R}_{2,1}(1)\big\lvert^2dy,\\
\nonumber\mathcal{I}_3=&\sum_{i_1=2M_1+1}^{\infty}\sum_{i_2=2M_2+1}^{\infty}\sum_{i_1'=2M_1+1}^{\infty}\sum_{i_2'=2M_2+1}^{\infty}\big\lvert D_{i_1,i_2}\big\rvert\big\lvert D_{i_1',i_2'}\big\rvert\int_0^1\big\lvert\mathcal{R}_{2,i_1}(x)-x\mathcal{R}_{2,i_1}(1)\big\rvert\big\lvert\mathcal{R}_{2,i_1'}(x)-x\mathcal{R}_{2,i_1'}(1)\big\rvert dx\\
&~~~~~~~~~~~\times\int_0^1\big\lvert\mathcal{R}_{2,i_2}(y)-y\mathcal{R}_{2,i_2}(1)\big\rvert\big\lvert\mathcal{R}_{2,i_2'}(y)-y\mathcal{R}_{2,i_2'}(1)\big\rvert dy,\\
\mathcal{I}_4=& 2\sum_{i_2=2M_2+1}^{\infty}\sum_{i_1'=2M_1+1}^{\infty}\big\lvert D_{1,i_2}\big\rvert\big\lvert D_{i_1',1}\big\rvert\int_0^1\big\lvert\mathcal{R}_{2,1}(x)-x\mathcal{R}_{2,1}(1)\big\rvert\big\lvert\mathcal{R}_{2,i_1'}(x)-x\mathcal{R}_{2,i_1'}(1)\big\rvert dx\\
&~~~~~~~~~~~\times\int_0^1\big\lvert\mathcal{R}_{2,i_2}(y)-y\mathcal{R}_{2,i_2}(1)\big\rvert\big\lvert\mathcal{R}_{2,1}(y)-y\mathcal{R}_{2,1}(1)\big\rvert dy,\\
\nonumber\mathcal{I}_5=&2\sum_{i_2=2M_2+1}^{\infty}\sum_{i_1'=2M_1+1}^{\infty}\sum_{i_2'=2M_2+1}^{\infty}\big\lvert D_{1,i_2}\big\rvert\big\lvert D_{i_1',i_2'}\big\rvert\int_0^1\big\lvert\mathcal{R}_{2,1}(x)-x\mathcal{R}_{2,1}(1)\big\rvert\big\lvert\mathcal{R}_{2,i_1'}(x)-x\mathcal{R}_{2,i_1'}(1)\big\rvert dx\\
&~~~~\times\int_0^1\big\lvert\mathcal{R}_{2,i_2}(y)-y\mathcal{R}_{2,i_2}(1)\big\rvert\big\lvert\mathcal{R}_{2,i_2'}(y)-y\mathcal{R}_{2,i_2'}(1)\big\rvert dy,\\
\nonumber\mathcal{I}_6=&2\sum_{i_1=2M_1+1}^{\infty}\sum_{i_1'=2M_1+1}^{\infty}\sum_{i_2'=2M_2+1}^{\infty}\big\lvert D_{i_1,1}\big\rvert\big\lvert D_{i_1',i_2'}\big\rvert\int_0^1\big\lvert\mathcal{R}_{2,i_1}(x)-x\mathcal{R}_{2,i_1}(1)\big\rvert\big\lvert\mathcal{R}_{2,i_1'}(x)-x\mathcal{R}_{2,i_1'}(1)\big\rvert dx\\
&~~~~\times\int_0^1\big\lvert\mathcal{R}_{2,1}(y)-y\mathcal{R}_{2,1}(1)\big\rvert\big\lvert\mathcal{R}_{2,i_2'}(y)-y\mathcal{R}_{2,i_2'}(1)\big\rvert dy,
\end{align*}
} 
Applying Lemma \ref{Pdoc1_lem5} and Lemma \ref{Pdoc1_lem6}, one has
{\small
\begin{align}
    \nonumber\mathcal{I}_1\leq& \sum_{i_2=2M_2+1}^{\infty}\sum_{i_2'=2M_2+1}^{\infty}\dfrac{\mathscr{M}^2}{2^{j_2+1}2^{j_2'+1}}\Bigg[\int_0^1\Big\lvert\mathcal{R}_{2,1}(x)\Big\rvert^2dx+\int_0^1 x^2\Big\lvert\mathcal{R}_{2,1}(1)\Big\rvert^2dx\\
    \nonumber&~~~+\int_0^1 2\lvert x\rvert\Big\lvert\mathcal{R}_{2,1}(x)\Big\rvert\Big\lvert\mathcal{R}_{2,1}(1)\Big\rvert dx\Bigg] \Bigg[\int_0^1\Big\lvert\mathcal{R}_{2,i_2}(y)\Big\rvert\Big\lvert\mathcal{R}_{2,i_2'}(y)\Big\rvert dy+\int_0^1 \lvert y\rvert\Big\lvert\mathcal{R}_{2,i_2}(y)\Big\rvert\Big\lvert\mathcal{R}_{2,i_2'}(1)\Big\rvert dy\\
    \nonumber &~~~+\int_0^1 \lvert y\rvert\Big\lvert\mathcal{R}_{2,i_2}(1)\Big\rvert\Big\lvert\mathcal{R}_{2,i_2'}(y)\Big\rvert dy+\int_0^1 y^2\Big\lvert\mathcal{R}_{2,i_2}(1)\Big\rvert\Big\lvert\mathcal{R}_{2,i_2'}(1)\Big\rvert dy\Bigg]\\
    \nonumber\leq& \sum_{j_2=J_2+1}^{\infty}\sum_{j_2'=J_2+1}^{\infty}\dfrac{\mathscr{M}^2}{2^{j_2+1}2^{j_2'+1}}\Big[\dfrac{1}{(2!)^2}+\dfrac{1}{3(2!)^2}+\dfrac{1}{(2!)^2}\Big]\Big[\dfrac{[\mathscr{C}(2)]^2}{(2^{j_2+1})^2(2^{j_2'+1})^2}+\dfrac{[\mathscr{C}(2)]^2}{(2^{j_2+1})^2(2^{j_2'+1})^2}+\dfrac{[\mathscr{C}(2)]^2}{3(2^{j_2+1})^2(2^{j_2'+1})^2}\Big]\\
    \nonumber=& \dfrac{49\mathscr{M}^2[\mathscr{C}(2)]^2}{3\cdot3\cdot (2!)^2}\sum_{j_2=J_2+1}^{\infty}\dfrac{1}{(2^{j_2+1})^3}\sum_{j_2'=J_2+1}^{\infty}\dfrac{1}{(2^{j_2'+1})^3}= \dfrac{49\mathscr{M}^2[\mathscr{C}(2)]^2}{3\cdot3\cdot (2!)^2\cdot8\cdot8}\sum_{j_2=J_2+1}^{\infty}\dfrac{1}{8^{j_2}}\sum_{j_2'=J_2+1}^{\infty}\dfrac{1}{8^{j_2'}}\\
    =& \dfrac{49\mathscr{M}^2[\mathscr{C}(2)]^2}{3\cdot3\cdot (2!)^2\cdot8\cdot8\cdot8^{J_2+1}\cdot8^{J_2+1}}\sum_{j_2=0}^{\infty}\dfrac{1}{8^{j_2}}\sum_{j_2'=0}^{\infty}\dfrac{1}{8^{j_2'}}=\dfrac{\mathscr{M}^2[\mathscr{C}(2)]^2}{3\cdot3\cdot (2!)^2\cdot8\cdot8\cdot8^{J_2}\cdot8^{J_2}}
    \leq CM_2^{-6}\leq C\mathcal{M}^{-6},\label{Pdoc1_46}
\end{align}}
where $\mathcal{M}=\min\{M_1,M_2\}$. Proceeding in a similar way, one can obtain the following bounds:
\begin{equation}\label{Pdoc1_47}
\left\{
\begin{array}{ll}
    \mathcal{I}_2\leq&  CM_1^{-6}\leq C\mathcal{M}^{-6},\\
    \mathcal{I}_3\leq& CM_1^{-6}M_2^{-6}\leq C\mathcal{M}^{-12}\leq C\mathcal{M}^{-6},\\
    \mathcal{I}_4\leq& CM_1^{-3}M_2^{-3}\leq C\mathcal{M}^{-6},\\
    \mathcal{I}_5\leq& CM_1^{-3}M_2^{-6}\leq C\mathcal{M}^{-9}\leq C\mathcal{M}^{-6},\\
    \mathcal{I}_6\leq& CM_1^{-6}M_2^{-3}\leq C\mathcal{M}^{-9}\leq C\mathcal{M}^{-6}.
    \end{array}\right.
\end{equation}
Substituting (\ref{Pdoc1_46}) and (\ref{Pdoc1_47}) into (\ref{Pdoc1_45}), and then, taking the square root of both sides, one can obtain the desired error bound.
% \begin{align}
%     \nonumber\Big\|\mathcal{U}^{n+1}(x,y)-\mathcal{U}_{M_1M_2}^{n+1}(x,y)\Big\|_{L^2}^2\leq& C\widehat{M}^{-6}\\
%     \nonumber\Rightarrow\Big\|\mathcal{U}^{n+1}(x,y)-\mathcal{U}_{M_1M_2}^{n+1}(x,y)\Big\|_{L^2}\leq& C\widehat{M}^{-3},
% \end{align}
\end{proof}

%%%%%%%%%%%%%%%%%%%%%%%%%%%%%%%%%%%%%%%%%%%%%%%%%%%%%%%%%%%%%%%%%%%%%%%%%%%%
\begin{algorithm}[ht]
{\small
\caption{ALGORITHM FOR THE SOLUTION OF (\ref{Pdoc1_1}) WHEN $\Omega\subset\mathbb{R}^2$}\label{Pdoc1_Alg2}
\begin{algorithmic}[1]
    \State \textbf{Input 1:} $\alpha$ \Comment{Order of the fractional operator}
    \State \textbf{Input 2:} $N$ \Comment{Number of mesh interval towards time}
    \State \textbf{Input 2:} $J_1,~J_2$ \Comment{Maximum level of resolution towards $x,~y$}
    \State \textbf{Input 4:} $g,~h_1,~h_2,~h_3,~h_4$ \Comment{Initial and boundary conditions}
    \State Compute $\sigma=1-(\alpha/2)$ and the grading parameter $\nu$
    \State Compute $t_n=T*(n/N)^\nu$ for $n=0,1,\ldots,N$
    \State Compute $M_1=2^{J_1},~M_2=2^{J_2}$
    \State Compute $\{(x_{l_1},y_{l_2})\}_{l_1=1,l_2=1}^{2M_1,2M_2}$ using (\ref{Pdoc1_33}) \Comment{The collocation points}
    \State Compute $p,~q_1,~q_2,~r,~f$ and the kernel $\mathcal{K}$ using collocation points  
    \State Compute $\Lambda_0^0$ and $\Lambda_n^{j}$ for $n=1,2,\ldots,N-1$, $j=0,1,\ldots,n$ given in (\ref{Pdoc1_2})
    \For{$n=0,1,\ldots,N-1$} 
        \State Compute the operational matrices corresponding to $\mathcal{U}_{xx}^{n+\sigma},~\mathcal{U}_{yy}^{n+\sigma}$ using (\ref{Pdoc1_27}), (\ref{Pdoc1_28})
        \State Compute the operational matrices corresponding to $\mathcal{U}_{x}^{n+\sigma},~\mathcal{U}_{y}^{n+\sigma}$ using (\ref{Pdoc1_29}), (\ref{Pdoc1_30})
        \State Compute the operational matrices corresponding to $\mathcal{U}^{n+\sigma},~\mathcal{U}^{n+1}$ using (\ref{Pdoc1_31}), (\ref{Pdoc1_32})
        \State Compute the right-hand side vector by utilizing the source term $f$
        \State Compute the unknown wavelet coefficients $\{D_{i_1,i_2}\}_{i_1=1,i_2=1}^{2M_1,2M_2}$ by solving the linear system
        \State Put the wavelet coefficients in to (\ref{Pdoc1_32}) to compute $\boldsymbol{\mathcal{U}}^{n+1}$
    \EndFor
    \State \textbf{Output:} Obtained the complete solution $\mathcal{U}^{n+1}$ for $n=0,1,\ldots,N-1$
\end{algorithmic}}
\end{algorithm}

%%%%%%%%%%%%%%%%%%%%%%%%%%%%%%%%%%%%%%%%%%%%%%%%%%%%%%%%%%%%%%%%%%%%%%%%%%%%%%%%%%%%%%%%%%%%%%%%%%%%%%%%%%%%%%%%%%%%%%%%%%%%%%%%%%%%%%%%%%%%%%%%%%%%%%%
\section{Stability analysis for TFIPDEs based on graded mesh} \label{sec_Stability}
In this section, we discuss the stability of the proposed scheme based on non-uniform mesh in time, i.e., $\nu\neq1$. Let $\{\mathcal{U}_m^n\}_{m=0,n=0}^{M,N}$ is the mesh function corresponding to a continuous function $\mathcal{U}:{\varOmega}^{M,N}\rightarrow \mathbb{R}$. Then we define the discrete maximum norm as
\[\|\mathcal{U}\|_\infty:=\max_{(x_m,t_n)\in{\varOmega}^{M,N}}\lvert\mathcal{U}(x_m,t_n)\rvert\;\;\mbox{and }\;\|\mathcal{U}^n\|_\infty:=\max_{0\leq m\leq M}\lvert\mathcal{U}_m^n\rvert.\] 
The stability result will be established later in Theorem \ref{Pdoc1_thm1} based on the abovementioned norm. The following technical lemma will be helpful in establishing the stability estimate.
\begin{lemma}\label{Pdoc1_lem1}
(See Lemma 4 of \cite{ChenError2019})
    Define the local mesh ratio $\eta_j:=\tau_{j+1}/\tau_j$ for $j=1,2,\ldots,N-1$. Let $1\geq\sigma\geq1-\alpha/2$. Then one has the following:
    \begin{enumerate}
        \item $\Lambda_n^0>\dfrac{1}{t_{n+\sigma}^\alpha\Gamma(1-\alpha)}>0$ for $n\geq0$.
        \item $(2\sigma-1)\Lambda_1^1-\sigma\Lambda_1^0>0$ and $\Lambda_n^1>\Lambda_n^0$ for $n\geq1$.
        \item If $\eta_{j-1}^2(\eta_{j-1}+1)\geq\dfrac{\eta_j}{\eta_j+1}$ for some $j\in\{2,3,\ldots,n\}$, with $n\geq2$, then $\Lambda_n^j>\Lambda_n^{j-1}$.
        \item If $\eta_{n-1}^2\Big(2-(1/\sigma)+\eta_n(\eta_n+2)\Big)\geq\dfrac{\eta_n(\eta_n+1)}{\eta_{n-1}+1}$ for some $n\in\{2,3,\ldots,N\}$, then $(2\sigma-1)\Lambda_n^n-\sigma\Lambda_n^{n-1}>0$.
    \end{enumerate}
\end{lemma}
\noindent The discrete problem (\ref{Pdoc1_12}) can be rewritten as:
\begin{equation}\label{Pdoc1_51}
    \left\{
    \begin{array}{ll}
	\Big[\Lambda_n^{n}+\dfrac{2p(x_m)\sigma}{(\Delta x)^2}+r(x_m)\sigma+\dfrac{\mu\sigma^2\tau_{n+1}}{2}\mathcal{K}(x_m,0)\Big]\mathcal{U}_m^{n+1}=\Lambda_n^0\mathcal{U}_m^0+\displaystyle{\sum_{j=1}^n} (\Lambda_n^{j}-\Lambda_n^{j-1})\mathcal{U}_m^{j}\\
        \hspace{2.5cm}+\Big[\dfrac{p(x_m)\sigma}{(\Delta x)^2}+\dfrac{q(x_m)\sigma}{2\Delta x}\Big]\mathcal{U}_{m-1}^{n+1}+\Big[\dfrac{p(x_m)\sigma}{(\Delta x)^2}-\dfrac{q(x_m)\sigma}{2\Delta x}\Big]\mathcal{U}_{m+1}^{n+1}\\[6pt]
        \hspace{2.5cm}+\Big[\dfrac{p(x_m)(1-\sigma)}{(\Delta x)^2}+\dfrac{q(x_m)(1-\sigma)}{2\Delta x}\Big]\mathcal{U}_{m-1}^{n}+\Big[\dfrac{p(x_m)(1-\sigma)}{(\Delta x)^2}-\dfrac{q(x_m)(1-\sigma)}{2\Delta x}\Big]\mathcal{U}_{m+1}^{n}\\[6pt]
        \hspace{2.5cm}-\Big[\dfrac{2p(x_m)(1-\sigma)}{(\Delta x)^2}+r(x_m)(1-\sigma)+\dfrac{\mu\tau_n}{2}\mathcal{K}(x_m,t_{n+\sigma}-t_n)+\dfrac{\mu\sigma\tau_{n+1}}{2}\mathcal{K}(x_m,t_{n+\sigma}-t_n)\\[6pt]
        \hspace{2.5cm}+\dfrac{\mu\tau_{n+1}\sigma(1-\sigma)}{2}\mathcal{K}(x_m,0)\Big]\mathcal{U}_m^n-\dfrac{\mu\tau_n}{2}\mathcal{K}(x_m,t_{n+\sigma}-t_{n-1})\mathcal{U}_m^{n-1}\\[6pt]
        \hspace{2.5cm}-\mu\displaystyle{\sum_{j=0}^{n-2}}\dfrac{\tau_{n+1}}{2}\Big[\mathcal{K}(x_m,t_{n+\sigma}-t_{j+1})\mathcal{U}_m^{j+1}+\mathcal{K}(x_m,t_{n+\sigma}-t_{j})\mathcal{U}_m^{j}\Big]+f(x_m,t_{n+\sigma}),\\[6pt]
	\mbox{for }m=1,2,\ldots,M-1;~n=0,1,\ldots,N-1,\\
	\mathcal{U}_m^0=g(x_m)\;\; \mbox{for } m=0,1,\ldots,M,~
	\mathcal{U}_0^{n+1}=h_1(t_{n+1}) \mbox{ and } \mathcal{U}_M^{n+1}=h_2(t_{n+1})\;\; \mbox{for } n=0,1,\ldots,N-1.
    \end{array}\right.
\end{equation}
The following theorem demonstrates the stability estimate of the proposed numerical approximation in favor of the TFIPDEs (\ref{Pdoc1_1}), which will be used later in Theorem \ref{Pdoc1_thm2} to show the main convergence result.
\begin{theorem}\label{Pdoc1_thm1}
    Let us assume that $1\geq\sigma\geq1-\alpha/2$. Based on the conditions given in Lemma \ref{Pdoc1_lem1}, the solution of the discrete problem (\ref{Pdoc1_12}) satisfies the following inequality:
    \[\big\|\mathcal{U}^{n+1}\big\|_\infty\leq \big\|\mathcal{U}^0\big\|_\infty+t_{n+\sigma}^\alpha\Gamma(1-\alpha)\max_{0\leq n\leq N-1}\big\|f^{n+\sigma}\big\|_\infty,~n=0,1,\ldots,N-1.\]
\end{theorem}
\begin{proof}
From Lemma \ref{Pdoc1_lem1}, we have $\Lambda_n^{n}>\Lambda_{n}^{n-1}>\cdots>\Lambda_n^{0}> \mathscr{C}>0$, for $n=0,1,\ldots,N-1$ with $\mathscr{C}=\dfrac{1}{t_{n+\sigma}^\alpha\Gamma(1-\alpha)}$. For any fixed $n\in\{0,1,\cdots,N-1\}$, choose an $m^*$ such that $\|w^{n+1}\|_\infty=\lvert w_{m^*}^{n+1}\rvert$. Then,  (\ref{Pdoc1_51}) yields 
\begin{equation*}
	\begin{array}{ll}
	\Big[\Lambda_n^{n}+\dfrac{2p(x_{m^*})\sigma}{(\Delta x)^2}+r(x_{m^*})\sigma+\dfrac{\mu\sigma^2\tau_{n+1}}{2}\mathcal{K}(x_{m^*},0)\Big]\mathcal{U}_{m^*}^{n+1}=\Lambda_n^0\mathcal{U}_{m^*}^0+\displaystyle{\sum_{j=1}^n} (\Lambda_n^{j}-\Lambda_n^{j-1})\mathcal{U}_{m^*}^{j}\\
        \hspace{2.5cm}+\Big[\dfrac{p(x_{m^*})\sigma}{(\Delta x)^2}+\dfrac{q(x_{m^*})\sigma}{2\Delta x}\Big]\mathcal{U}_{m^*-1}^{n+1}+\Big[\dfrac{p(x_{m^*})\sigma}{(\Delta x)^2}-\dfrac{q(x_{m^*})\sigma}{2\Delta x}\Big]\mathcal{U}_{m^*+1}^{n+1}\\[6pt]
        \hspace{2.5cm}+\Big[\dfrac{p(x_{m^*})(1-\sigma)}{(\Delta x)^2}+\dfrac{q(x_{m^*})(1-\sigma)}{2\Delta x}\Big]\mathcal{U}_{m^*-1}^{n}+\Big[\dfrac{p(x_{m^*})(1-\sigma)}{(\Delta x)^2}-\dfrac{q(x_{m^*})(1-\sigma)}{2\Delta x}\Big]\mathcal{U}_{m^*+1}^{n}\\[6pt]
        \hspace{2.5cm}-\Big[\dfrac{2p(x_{m^*})(1-\sigma)}{(\Delta x)^2}+r(x_{m^*})(1-\sigma)+\dfrac{\mu\tau_n}{2}\mathcal{K}(x_{m^*},t_{n+\sigma}-t_n)+\dfrac{\mu\sigma\tau_{n+1}}{2}\mathcal{K}(x_{m^*},t_{n+\sigma}-t_n)\\[6pt]
        \hspace{2.5cm}+\dfrac{\mu\tau_{n+1}\sigma(1-\sigma)}{2}\mathcal{K}(x_{m^*},0)\Big]\mathcal{U}_{m^*}^n-\dfrac{\mu\tau_n}{2}\mathcal{K}(x_{m^*},t_{n+\sigma}-t_{n-1})\mathcal{U}_{m^*}^{n-1}\\[6pt]
        \hspace{2.5cm}-\mu\displaystyle{\sum_{j=0}^{n-2}}\dfrac{\tau_{n+1}}{2}\Big[\mathcal{K}(x_{m^*},t_{n+\sigma}-t_{j+1})\mathcal{U}_{m^*}^{j+1}+\mathcal{K}(x_{m^*},t_{n+\sigma}-t_{j})\mathcal{U}_{m^*}^{j}\Big]+f(x_{m^*},t_{n+\sigma}).
	\end{array}
\end{equation*}
Keeping in mind that $1\geq\sigma\geq1-\alpha/2$, $r\geq 0$, and the kernel $\mathcal{K}\geq0$ together with the condition defined in (\ref{Nonrestrictive_assumption}) and the choice of $m^*$ yields
\begin{equation*}
    \begin{array}{ll}
        \Big[\Lambda_n^{n}+\dfrac{2p(x_{m^*})\sigma}{(\Delta x)^2}\Big]\big\|\mathcal{U}^{n+1}\big\|_\infty\leq \Lambda_n^{0}\big\|\mathcal{U}^{0}\big\|_\infty+\displaystyle{\sum_{j=1}^n} [\Lambda_n^{j}-\Lambda_{n}^{j-1}]\big\|\mathcal{U}^{j}\big\|_\infty+\Big[\dfrac{p(x_{m^*})\sigma}{(\Delta x)^2}+\dfrac{q(x_{m^*})\sigma}{2\Delta x}\Big]\big\|\mathcal{U}^{n+1}\big\|_\infty\\[8pt]
        \hspace{5cm}+\Big[\dfrac{p(x_{m^*})\sigma}{(\Delta x)^2}-\dfrac{q(x_{m^*})\sigma}{2\Delta x}\Big]\big\|\mathcal{U}^{n+1}\big\|_\infty+\Big[\dfrac{p(x_{m^*})(1-\sigma)}{(\Delta x)^2}+\dfrac{q(x_{m^*})(1-\sigma)}{2\Delta x}\Big]\big\|\mathcal{U}^{n}\big\|_\infty\\[8pt]
        \hspace{5cm}+\Big[\dfrac{p(x_{m^*})(1-\sigma)}{(\Delta x)^2}-\dfrac{q(x_{m^*})(1-\sigma)}{2\Delta x}\Big]\big\|\mathcal{U}^{n}\big\|_\infty-\Big[\dfrac{2p(x_{m^*})(1-\sigma)}{(\Delta x)^2}\Big]\big\|\mathcal{U}^n\big\|_\infty+\big\|f^{n+\sigma}\big\|_\infty.
    \end{array}
\end{equation*}
The given condition $\Lambda_n^{n}>\Lambda_{n}^{n-1}>\cdots>\Lambda_n^{0}\geq \mathscr{C}>0$ enables us to have
\begin{align}
    \nonumber \Lambda_n^{n}\big\|\mathcal{U}^{n+1}\big\|_\infty&\leq \displaystyle{\sum_{j=1}^n} [\Lambda_n^{j}-\Lambda_{n}^{j-1}]\big\|\mathcal{U}^{j}\big\|_\infty+\Lambda_n^{0}\Big[\big\|\mathcal{U}^{0}\big\|_\infty+t_{n+\sigma}^\alpha\Gamma(1-\alpha)\big\|f^{n+\sigma}\big\|_\infty\Big]\\
    &\leq \displaystyle{\sum_{j=1}^n} [\Lambda_n^{j}-\Lambda_{n}^{j-1}]\big\|\mathcal{U}^{j}\big\|_\infty+\Lambda_n^{0}\Big[\big\|\mathcal{U}^{0}\big\|_\infty+t_{n+\sigma}^\alpha\Gamma(1-\alpha)\max_{0\leq n\leq N-1}\big\|f^{n+\sigma}\big\|_\infty\Big]. \label{Pdoc1_14}
\end{align}
Let us take $\mathscr{D}=\big\|\mathcal{U}^0\big\|_\infty+t_{n+\sigma}^\alpha\Gamma(1-\alpha)\displaystyle{\max_{0\leq n\leq N-1}}\big\|f^{n+\sigma}\big\|_\infty$. We now use mathematical induction to prove this theorem. For $n=0$, the theorem is automatically satisfied from (\ref{Pdoc1_14}). Let us assume that the inequality holds for $n=1,2,\ldots,k-1$, i.e. $\big\|\mathcal{U}^{n+1}\big\|_\infty\leq \mathscr{D},~n=1,\ldots,k-1$. Now, from (\ref{Pdoc1_14}), we have 
\begin{equation*}
    \begin{array}{ll}
        \Lambda_k^{k}\big\|\mathcal{U}^{k+1}\big\|_\infty\leq \displaystyle{\sum_{j=1}^k}(\Lambda_k^{j}-\Lambda_{k}^{j-1})\big\|\mathcal{U}^j\big\|_\infty+\Lambda_k^{0}\mathscr{D} \leq \displaystyle{\sum_{j=1}^k}(\Lambda_k^{j}-\Lambda_{k}^{j-1})\mathscr{D}+\Lambda_k^{0}\mathscr{D} =\Lambda_k^{k}\mathscr{D},
    \end{array}
\end{equation*}
which implies
\[\big\|\mathcal{U}^{k+1}\big\|_\infty\leq \mathscr{D}=\big\|\mathcal{U}^0\big\|_\infty+t_{n+\sigma}^\alpha\Gamma(1-\alpha)\displaystyle{\max_{0\leq n\leq N-1}}\big\|f^{n+\sigma}\big\|_\infty.\]
Hence, the theorem is proved. 
\end{proof}

%%%%%%%%%%%%%%%%%%%%%%%%%%%%%%%%%%%%%%%%%%%%%%%%%%%%%%%%%%%%%%%%%%%%%%%%%%%%%%%%%%%%%%%%%%%%%%%%%%%%%%%%%%%%%%%%%%%%%%%%%%%%%%%%%%%%%%%%%%
\section{Convergence analysis}\label{sec_Convergence}
In this section, first, we estimate the truncation error bounds for the approximation of the fractional operator, Volterra integral operator, and spatial derivatives. Subsequently, we achieve the main convergence results for the given TFIPDEs (\ref{Pdoc1_1}) for both the cases, i.e., $\Omega\subset\mathbb{R}$ and $\Omega\subset\mathbb{R}^2$. To begin the analysis, we start with an elementary interpolation error bound given in the following lemma.
\begin{lemma}\label{Pdoc1_lem3}
    For any $\phi(t)\in C^2(\Omega_t)$, and $n=0,1,\ldots,N-1$, one yields
    \[\lvert \phi(t_{n+\sigma})-\sigma\phi(t_{n+1})-(1-\sigma)\phi(t_n)\rvert\leq CN^{-2}.\]
\end{lemma}
\begin{proof}
Using the Taylor series expansion, we have
\begin{align*}
   \phi(t_{n+1})=&\phi(t_{n+\sigma})+(1-\sigma)\tau_{n+1}\phi'(t_{n+\sigma})+\dfrac{(1-\sigma)^2\tau_{n+1}^2}{2}\phi''(t_{n+\sigma})+\cdots,\\
   \phi(t_n)=&\phi(t_{n+\sigma})-\sigma\tau_{n+1}\phi'(t_{n+\sigma})+\dfrac{\sigma^2\tau_{n+1}^2}{2}\phi''(t_{n+\sigma})-\cdots.
\end{align*}
Consequently, it yields
\[\sigma\phi(t_{n+1})+(1-\sigma)\phi(t_n)=\phi(t_{n+\sigma})+\mathcal{O}(\tau_{n+1}^2).\]
Using the bounds given in (\ref{Pdoc1_53}), we obtain
\[\lvert \phi(t_{n+\sigma})-\sigma\phi(t_{n+1})-(1-\sigma)\phi(t_n)\rvert\leq CN^{-2}.\]
\end{proof}
The following lemma describes the truncation error bound for the discretization of the fractional operator based on non-uniform mesh for solution having time singularity.
\begin{lemma}\label{Pdoc1_lem2}
(See Eq. (3.6) of \cite{SinghAfully2024})
Let $\mathcal{U}(\cdot,t)\in C(\overline{\Omega}_t)\cap C^3(\Omega_t)$. For $n=0,1,\ldots, N-1$, the discretization error of the fractional Caputo operator on a temporally graded mesh satisfies the following bound:
\[\Big\lvert {}^{(1)}\mathscr{R}^{n+\sigma}\Big\rvert\leq Ct_{n+\sigma}^{-\alpha}N^{-\min\{\nu\alpha,3-\alpha\}}.\]    
\end{lemma}
Now, we discuss the truncation error bound for the integral operator based on the discretization given in (\ref{Pdoc1_4}) on a non-uniform mesh.
\begin{lemma}\label{Pdoc1_lem4}
    If the solution of the proposed problem meets the regularity condition defined in (\ref{Pdoc1_52}), then the discretization error corresponding to the Volterra integral operator on a temporally graded mesh satisfies the following bound:
    \[\Big\lvert {}^{(2)}\mathscr{R}^{n+\sigma}\Big\rvert\leq CN^{-\min\{2,\nu(\alpha+1)\}}~\mbox{for}~n=0,1,\ldots,N-1.\]
\end{lemma}
\begin{proof}
The discretization of the integral operator discussed in (\ref{Pdoc1_4}) yields the following:
\begin{align}
    \nonumber\Big\lvert {}^{(2)}\mathscr{R}^{n+\sigma}\Big\rvert=&\Bigg\lvert \displaystyle{\int_{0}^{t_{n+\sigma}}}\mathcal{K}(\cdot,t_{n+\sigma}-\xi)\mathcal{U}(\cdot,\xi)\;d\xi-\mathscr{J}_N \mathcal{U}(\cdot,t_{n+\sigma})\Bigg\rvert\\
    \nonumber\leq& \underbrace{\Bigg\lvert \displaystyle{\int_{0}^{t_{n}}}\mathcal{K}(\cdot,t_{n+\sigma}-\xi)\mathcal{U}(\cdot,\xi)\;d\xi-\sum_{j=0}^{n-1}\dfrac{\tau_{j+1}}{2}\big[\mathcal{K}(\cdot,t_{n+\sigma}-t_{j+1})\mathcal{U}^{j+1}+\mathcal{K}(\cdot,t_{n+\sigma}-t_j)\mathcal{U}^j\big]\Bigg\rvert}_{=:{}^{(2)}\widetilde{\mathscr{R}}^{n+\sigma}}\\
    &+\underbrace{\Bigg\lvert \displaystyle{\int_{t_n}^{t_{n+\sigma}}}\mathcal{K}(\cdot,t_{n+\sigma}-\xi)\mathcal{U}(\cdot,\xi)\;d\xi-\dfrac{\sigma\tau_{n+1}}{2}\mathcal{K}(\cdot,t_{n+\sigma}-t_n)\mathcal{U}^n-\dfrac{\sigma\tau_{n+1}}{2}\mathcal{K}(\cdot,0)\big[\sigma\mathcal{U}^{n+1}+(1-\sigma)\mathcal{U}^n\big]\Bigg\rvert}_{=:{}^{(2)}\widehat{\mathscr{R}}^{n+\sigma}}.\label{Pdoc1_58}
\end{align}
From Lemma 4.6 of \cite{SantraHigherorder2023}, we have
\begin{equation}\label{Pdoc1_59}
    \Big\lvert {}^{(2)}\widetilde{\mathscr{R}}^{n+\sigma}\Big\rvert\leq CN^{-\min\{2,\nu(\alpha+1)\}}.
\end{equation}
Now, using Lemma \ref{Pdoc1_lem3}, one can have
\begin{align}
    \nonumber \Big\lvert {}^{(2)}\widehat{\mathscr{R}}^{n+\sigma}\Big\rvert\leq& \Bigg\lvert \displaystyle{\int_{t_n}^{t_{n+\sigma}}}\mathcal{K}(\cdot,t_{n+\sigma}-\xi)\mathcal{U}(\cdot,\xi)\;d\xi-\Big(\dfrac{\sigma\tau_{n+1}}{2}\mathcal{K}(\cdot,t_{n+\sigma}-t_n)\mathcal{U}^n+\dfrac{\sigma\tau_{n+1}}{2}\mathcal{K}(\cdot,0)\mathcal{U}^{n+\sigma}\Big)\Bigg\rvert +\mathcal{O}(N^{-2})\\
    \nonumber=& \Bigg\lvert \displaystyle{\int_{t_n}^{t_{n+\sigma}}}\mathcal{K}(\cdot,t_{n+\sigma}-\xi)\mathcal{U}(\cdot,\xi)\;d\xi-\dfrac{1}{2}\int_{t_n}^{t_{n+\sigma}}\Big(\mathcal{K}(\cdot,t_{n+\sigma}-t_n)\mathcal{U}^n+\mathcal{K}(\cdot,0)\mathcal{U}^{n+\sigma}\Big)d\xi\Bigg\rvert +\mathcal{O}(N^{-2})\\
    \nonumber=& \dfrac{1}{2}\Bigg\lvert \displaystyle{\int_{t_n}^{t_{n+\sigma}}}\Big(\omega(\xi)-\omega(t_n)\Big)d\xi+\displaystyle{\int_{t_n}^{t_{n+\sigma}}}\Big(\omega(\xi)-\omega(t_{n+\sigma})\Big)d\xi\Bigg\rvert +\mathcal{O}(N^{-2}),
\end{align}
where the function $\omega(t)=\mathcal{K}(\cdot,t_{n+\sigma}-t)\mathcal{U}(\cdot,t)$. Using triangle inequality and the Lagrange mean value theorem, we reach at
\begin{align}
    \nonumber \Big\lvert {}^{(2)}\widehat{\mathscr{R}}^{n+\sigma}\Big\rvert\leq& \dfrac{1}{2}\lvert \xi-t_n\rvert\lvert\omega'(\xi_1)\rvert\int_{t_n}^{t_{n+\sigma}}d\xi+\dfrac{1}{2}\lvert \xi-t_{n+\sigma}\rvert\lvert\omega'(\xi_2)\rvert\int_{t_n}^{t_{n+\sigma}}d\xi+\mathcal{O}(N^{-2})\\
    \nonumber\leq& \dfrac{\sigma}{2}\tau_{n+1}^2\lvert\omega'(\xi_1)\rvert+\dfrac{\sigma}{2}\tau_{n+1}^2\lvert\omega'(\xi_2)\rvert+\mathcal{O}(N^{-2}),
\end{align}
where $\xi_1,\xi_2\in(t_n,t_{n+\sigma})$. Now, applying the bounds given in (\ref{Pdoc1_53}), we have
\begin{equation}\label{Pdoc1_60}
    \Big\lvert {}^{(2)}\widehat{\mathscr{R}}^{n+\sigma}\Big\rvert\leq CN^{-2}+\mathcal{O}(N^{-2})\leq CN^{-2}.
\end{equation}
Finally, the bounds given in (\ref{Pdoc1_59}) and (\ref{Pdoc1_60}) are used in (\ref{Pdoc1_58}) to obtain the desired result.
\end{proof}
The following lemma shows the discretization error of the spatial derivatives when $\Omega\subset\mathbb{R}$.
\begin{lemma}\label{Pdoc1_lem7}
    The discretization of the spatial derivatives, discussed in (\ref{Pdoc1_3}) and (\ref{Pdoc1_21}), satisfies the following error bounds:
    \[\Big\lvert{}^{(3)}\mathscr{R}_m^{n+\sigma}\Big\rvert\leq C(N^{-2}+M^{-2}),~~ \Big\lvert {}^{(4)}\mathscr{R}_m^{n+\sigma}\Big\rvert\leq C(N^{-2}+M^{-2}),\]
    for $n=0,1,\ldots,N-1$, $m=1,2,\ldots,M-1$.
\end{lemma}
\begin{proof}
    The spatial discretizations are the well-known second-order central difference approximations, and then the proof follows from Lemma \ref{Pdoc1_lem3}. 
\end{proof}
The error equation corresponding to TFIPDEs for one-dimensional case (i.e., for $\Omega\subset\mathbb{R}$) can be obtained by subtracting (\ref{Pdoc1_12}) from (\ref{Pdoc1_55}), and which can be expressed as follows:
\begin{equation}\label{Pdoc1_15}
    \left\{
    \begin{array}{ll}
	{}^{L2\mbox{-}1_\sigma}\mathcal{D}_N^\alpha e_m^{n+\sigma}-p(x_m)\delta_{\Delta x}^2~e_m^{n+\sigma}+q(x_m)D_{\Delta x}^0e_m^{n+\sigma}+ r(x_m)e_m^{n+\sigma}
	+\mu\mathscr{J}_N e_m^{n+\sigma}=\mathscr{R}_m^{n+\sigma}, \\[4pt]
	\mbox{for }m=1,2,\ldots,M-1;~n=0,1,\ldots,N-1,\\[4pt]
	e_m^0=0\;\; \mbox{for } m=0,1,\ldots,M,\\
	e_0^{n+1}=e_M^{n+1}=0\;\; \mbox{for } n=0,1,\ldots,N-1,
    \end{array}\right.
\end{equation}
where $\mathscr{R}_m^{n+\sigma}$ is the remainder term defined in (\ref{Pdoc1_56}), and $e_m^{n+\sigma}=\mathcal{U}(x_m,t_{n+\sigma})-\mathcal{U}_m^{n+\sigma}$.

%%%%%%%%%%%%%%%%%%%%%%%%%%%%%%%%%%%%%%%%%%%%%%%%%%%%%%%%%%%%%%%%%%%%%%%%%%%%%%%%%%%%%%%%%%%
\subsection{Error bounds}

\begin{theorem}\label{Pdoc1_thm2}
Assume that $1\geq\sigma\geq1-\alpha/2$ and let the solution of the given TFIPDE (\ref{Pdoc1_1}) satisfies the regularity condition given in (\ref{Pdoc1_52}). Then, for $\Omega\subset\mathbb{R}$, if $\{\mathcal{U}(x_m,t_n)\}_{m=0,n=0}^{M,N}$ and $\{\mathcal{U}_m^n\}_{m=0,n=0}^{M,N}$ be the exact and numerical solutions of (\ref{Pdoc1_1}) by using the discrete scheme (\ref{Pdoc1_12}), respectively. Then, one has the following error bound:
\[\|e\|_\infty\leq C\big(N^{-\min\{\nu\alpha,2\}}+M^{-2}\big).\]
\end{theorem}
\begin{proof}
The solution of the discrete problem (\ref{Pdoc1_15}) satisfies the stability estimate given in Theorem \ref{Pdoc1_thm1}. Hence, for $n=0,1,\ldots,N-1$, we have the following bound:
\begin{align*}
    \big\|e^{n+1}\big\|_\infty\leq& \big\|e^0\big\|_\infty+t_{n+\sigma}^\alpha\Gamma(1-\alpha)\max_{0\leq n\leq N-1}\Big\|\mathscr{R}^{n+\sigma}\Big\|_\infty\\[6pt]
    \leq& t_{n+\sigma}^\alpha\Gamma(1-\alpha)\max_{0\leq n\leq N-1}\Big\|{}^{(1)}\mathscr{R}^{n+\sigma}\Big\|_\infty+t_{n+\sigma}^\alpha\Gamma(1-\alpha)\max_{0\leq n\leq N-1}\Big\|{}^{(2)}\mathscr{R}^{n+\sigma}\Big\|_\infty\\
    &+t_{n+\sigma}^\alpha\Gamma(1-\alpha)\max_{0\leq n\leq N-1}\Big\|{}^{(3)}\mathscr{R}^{n+\sigma}\Big\|_\infty+t_{n+\sigma}^\alpha\Gamma(1-\alpha)\max_{0\leq n\leq N-1}\Big\|{}^{(4)}\mathscr{R}^{n+\sigma}\Big\|_\infty
\end{align*}
Using Lemmas \ref{Pdoc1_lem2}, \ref{Pdoc1_lem4}, and \ref{Pdoc1_lem7}, we get
\begin{align*}
    \big\|e^{n+1}\big\|_\infty\leq CN^{-\min\{\nu\alpha,3-\alpha\}}+Ct_{n+\sigma}^\alpha N^{-\min\{2,\nu(\alpha+1)\}}
    +Ct_{n+\sigma}^\alpha (N^{-2}+M^{-2})+Ct_{n+\sigma}^\alpha (N^{-2}+M^{-2}).
\end{align*}
Notice that $t_{n+\sigma}^\alpha\leq T$ for $\alpha\in(0,1)$ and any $n\in\{0,1,\ldots,N-1\}$. Hence, we obtain
\begin{align*}
    \|e\|_\infty=\max_{0\leq n\leq N-1}\big\|e^{n+1}\big\|_\infty\leq C\big(N^{-\min\{\nu\alpha,2\}}+M^{-2}\big).
\end{align*}
\end{proof}
\begin{remark}\label{Pdoc1_remark1}
In the realm of TFIPDEs with $\Omega\subset\mathbb{R}$, Theorem \ref{Pdoc1_thm2} demonstrates that the proposed methodology can achieve an optimal convergence rate of $\mathcal{O}(N^{-2})$ over time on a non-uniform mesh, provided the grading parameter is appropriately selected (specifically, $\nu\geq 2/\alpha$) for a solution exhibiting time singularity at $t=0$. The convergence remains unchanged when a uniform mesh is used, i.e., $\nu=1$. In this context, the accuracy reduces to $\mathcal{O}(N^{-\alpha})$.
\end{remark}
The following theorem demonstrates the error bounds for temporal semi-discretization of the proposed problem for the two-dimensional case (i.e., for $\Omega\subset\mathbb{R}^2$) based on a non-uniform mesh, which is graded towards the origin. The bounds will be used later in Theorem \ref{Pdoc1_thm7} to obtain the main convergence results when $\Omega\subset\mathbb{R}^2$.
\begin{theorem}\label{Pdoc1_thm5}
Assume that the solution of the proposed TFIPDE with $\Omega\subset\mathbb{R}^2$ satisfies the regularity condition (\ref{Pdoc1_52}) and let $1\geq\sigma\geq1-\alpha/2$. Then, for $n=0,1,\ldots, N-1$, the solution of the semi-discretized problem (\ref{Pdoc1_25}) satisfies the following error bound:
    \[\Big\lvert\mathcal{U}(x,y,t_{n+1})-\mathcal{U}^{n+1}(x,y)\Big\rvert\leq CN^{-\min\{\nu\alpha,2\}}.\]
\end{theorem}
\begin{proof}
From Lemmas \ref{Pdoc1_lem2} and \ref{Pdoc1_lem4}, it can be confirmed that the remainder term depicted in (\ref{Pdoc1_61}) satisfies the following bound:
\[\Big\lvert\widehat{\mathscr{R}}^{n+\sigma}\Big\rvert\leq \Big\lvert{}^{(1)}\mathscr{R}^{n+\sigma}\Big\rvert+\Big\lvert{}^{(2)}\mathscr{R}^{n+\sigma}\Big\rvert\leq Ct_{n+\sigma}^{-\alpha}N^{-\min\{\nu\alpha,3-\alpha\}}+CN^{-\min\{2,\nu(\alpha+1)\}}.\]
With the stability estimate given in Theorem \ref{Pdoc1_thm1} and the hypothesis presented in Theorem \ref{Pdoc1_thm2} together with the above inequality, one can obtain the desired bound.  
\end{proof}
The following theorem illustrates the convergence of the fully discrete solution to the TFIPDEs (\ref{Pdoc1_1}) in two dimensions (i,e., for $\Omega\subset\mathbb{R}^2$), which is based on $L2$-$1_\sigma$ scheme on a graded mesh in time and the multi-dimensional Haar wavelets.
\begin{theorem}\label{Pdoc1_thm7}
The solution to the given TFIPDEs (\ref{Pdoc1_1}) when $\Omega\subset\mathbb{R}^2$ satisfies the following error bound:
    \[\Big\|\mathcal{U}(x,y,t_{n+1})-\mathcal{U}_{M_1M_2}^{n+1}(x,y)\Big\|_{L^2(\overline{\Omega})}\leq C\big(N^{-\min\{\nu\alpha,2\}}+\mathcal{M}^{-3}\big),\]
where $\mathcal{M}=\min\{M_1,M_2\}$.
\end{theorem}
\begin{proof}
By using the triangle inequality and utilizing Theorems \ref{Pdoc1_thm6} and \ref{Pdoc1_thm5}, the required error bound is derived as follows:
\begin{align*}
    \Big\|\mathcal{U}(x,y,t_{n+1})-\mathcal{U}_{M_1M_2}^{n+1}(x,y)\Big\|_{L^2(\overline{\Omega})}&\leq \Big\|\mathcal{U}(x,y,t_{n+1})-\mathcal{U}^{n+1}(x,y)\Big\|_{L^2(\overline{\Omega})}+\Big\|\mathcal{U}^{n+1}(x,y)-\mathcal{U}_{M_1M_2}^{n+1}(x,y)\Big\|_{L^2(\overline{\Omega})}\\
    &\leq \Big\|\mathcal{U}(x,y,t_{n+1})-\mathcal{U}^{n+1}(x,y)\Big\|_{\infty}+\Big\|\mathcal{U}^{n+1}(x,y)-\mathcal{U}_{M_1M_2}^{n+1}(x,y)\Big\|_{L^2(\overline{\Omega})}\\
    &\leq C\big(N^{-\min\{\nu\alpha,2\}}+\mathcal{M}^{-3}\big).
\end{align*}
\end{proof}
\begin{remark}\label{Pdoc1_remark2}
From Theorem \ref{Pdoc1_thm7}, it is observed that the wavelet-based discrete $L2$-$1_\sigma$ scheme achieves an optimal rate of convergence of $\mathcal{O}(N^{-2}+\mathcal{M}^{-3})$ in the space-time domain for $\nu\geq2/\alpha$. In particular, it leads to second-order temporal accuracy for that specific choice of the grading parameter. However, on uniformly distributed temporal mesh, the accuracy of the proposed scheme reduces to $\mathcal{O}(N^{-\alpha})$. In this case, it is noted that the spatial domain $\Omega\subset\mathbb{R}^2$.
\end{remark}
\begin{remark}\label{Pdoc1_remark3}
({\bf Non-smooth solution having $\mathbf{\mathcal{U}\in C(\overline{\Omega}_t)\cap C^3(\Omega_t)}$}) From Theorems \ref{Pdoc1_thm2} $\&$ \ref{Pdoc1_thm7} and the discussion in Remarks \ref{Pdoc1_remark1} $\&$ \ref{Pdoc1_remark2}, it is evident that the proposed scheme achieves a temporal accuracy of $\mathcal{O}(N^{-2})$ for the given TFIPDEs with an unbounded time derivative at $t=0$ if one takes the grading parameter $\nu\geq2/\alpha$. In contrast, the $L1$ scheme achieves at most $\mathcal{O}(N^{-(2-\alpha)})$ temporal accuracy. However, in this case, both the schemes lead to $\mathcal{O}(N^{-\alpha})$ accurate solution on a uniform mesh. The application of the $L1$ scheme to address time singularity in a fractional-order problem can be found in \cite{SantraHigherorder2023,StynesErroranalysis2017}.
\end{remark}
\begin{remark}\label{Pdoc1_remark4}
({\bf Smooth solution having $\mathbf{\mathcal{U}\in C^3(\overline{\Omega}_t)}$}) 
Given that the solution to the proposed problem is adequately smooth, with $\mathcal{U}\in C^3(\overline{\Omega}_t)$, the proposed approach achieves $\mathcal{O}(N^{-2})$ accuracy in time, even under uniform mesh discretization.  The analysis of the $L2$-$1_\sigma$ scheme on a time-fractional sub-diffusion equation with sufficiently smooth solution can be found in \cite{AlikhanovAnew2015}. In contrast, the $L1$ scheme leads to $\mathcal{O}(N^{-(2-\alpha)})$ temporal accuracy, which varies for different values of $\alpha$. In particular, it produces almost 1st-order convergent rate when $\alpha$ tends to one, whereas the proposed scheme achieves 2nd-order accuracy globally for all $\alpha\in(0,1)$. A widespread application of the $L1$ scheme to deal with the fractional-order problems without time singularity can be found in \cite{GaoAfinite2012,LinFinite2007} and references therein.
\end{remark}

%%%%%%%%%%%%%%%%%%%%%%%%%%%%%%%%%%%%%%%%%%%%%%%%%%%%%%%%%%%%%%%%%%%%%%%%%%%%%%%%%%%%%%%%%%%%%%%%%%%%%%%%%%%%%%%%%%%%%%%%%%%%%%%%%%%%%%%%%%%%%%%%%%%%%

\section{Results and discussion}\label{sec_example}
\noindent To show the effectiveness and high accuracy of the proposed method, in this section, we consider numerous test examples of one and two-dimensional TFIPDEs having known and unknown exact solutions with bounded and unbounded time derivatives at $t=0$ for special cases. The results are compared with the results obtained by the $L1$ scheme. Several tests are performed and the results are shown in the shape of figures and tables. All the experiments are done with $\sigma=1-\alpha/2$. The domains are defined as: $\Omega_t=(0,1]$, $\Omega=(0,1)$ if $\Omega\subset\mathbb{R}$ and $\Omega=(0,1)\times(0,1)$ if $\Omega\subset\mathbb{R}^2$.
\begin{example}\label{example1}
Let $\alpha\in(0,1)$. Consider the following TFIPDE that has a known exact solution with $\mathcal{U}\in C(\overline{\Omega}_t)/C^4(\overline{\Omega}_t)$ for special cases.
\begin{equation*}
    \left\{
    \begin{array}{ll}
    \partial_t^\alpha \mathcal{U}(x,t)-(1+x)\mathcal{U}_{xx}+\mathcal{U}(x,t)+\displaystyle{\int_0^t} \mathcal{K}(x,t-\xi)\mathcal{U}(x,\xi)d\xi=f(x,t),~(x,t)\in\Omega\times\Omega_t, \\[4pt]
    \mbox{with initial and boundary conditions:}\\[4pt]
    \mathcal{U}(x,0)=g(x)\;\; \mbox{for}~ x\in \overline{\Omega},~
    \mathcal{U}(0,t)=0,~ \mathcal{U}(1,t)=0\;\; \mbox{for}~ t\in \overline{\Omega}_t,
    \end{array}\right.
\end{equation*}
\end{example}
\noindent where the kernel $\mathcal{K}(x,t-\xi)=(t-\xi)\sin x$ and $g(x)=\sin 2\pi x$. The source term $f(x,t)$ is given by
\begin{align*}
    f(x,t)=&-\Gamma(\alpha+1)\sin 2\pi x +4\pi^2(1-t^{\alpha})(1+x)\sin 2\pi x+(1-t^{\alpha})\sin 2\pi x\\
  &+\Big(\dfrac{t^2}{2}-\dfrac{t^{\alpha+2}}{(\alpha+1)(\alpha+2)}\Big)\sin x\sin 2\pi x.
\end{align*}
The exact solution for Example \ref{example1} is then given by $\mathcal{U}(x,t)=(1-t^\alpha)\sin 2\pi x$. It can be noticed that the analytical behaviour of the present problem has an unbounded time derivative at the initial time $t=0$ due to which an initial layer occurs in the neighbourhood of $t=0$. See the impact of the fractional operator on the sharpness of the layer in Figure \ref{Ex1a_fig_fig1}. If $\{\mathcal{U}(x_m,t_n)\}_{m=0,n=0}^{M,N}$ denotes the exact solution and $\{\mathcal{U}_m^n\}_{m=0,n=0}^{M,N}$ is the numerical solution of Example \ref{example1} by using the proposed scheme (\ref{Pdoc1_12}), then the computed error $\widehat{E}_{M,N}$ and the corresponding temporal rate of convergence $\widehat{P}_{M,N}$ are estimated as: 
\begin{align}
\widehat{E}_{M,N}=\displaystyle{\max_{(x_m,t_n)\in\varOmega^{M,N}}}\Big\lvert\mathcal{U}(x_m,t_n)-\mathcal{U}_m^n\Big\rvert,~~\widehat{P}_{M,N}=\log_2\Big(\dfrac{\widehat{E}_{M,N}}{\widehat{E}_{M,2N}}\Big).\label{Pdoc1_18}
\end{align}
From Table \ref{Ex1a_table1}, it can be confirmed that for a non-uniform mesh with a suitably chosen grading parameter (here we fix $\nu=2/\alpha$), the proposed scheme can lead to a higher-order accuracy of $\mathcal{O}(N^{-2})$, which is superior to $L1$ scheme that gives $\mathcal{O}(N^{-(2-\alpha)})$ accuracy, as theoretically proved in Theorem \ref{Pdoc1_thm2} and further discussion in Remarks \ref{Pdoc1_remark1} \& \ref{Pdoc1_remark3}. In contrast, it reduces to $\mathcal{O}(N^{-\alpha})$ accuracy on a uniformly distributed mesh (i.e., for $\nu=1$) for the solution having initial time singularity for both schemes. See the computational results in Table \ref{Ex1a_table2}.

If we take $g(x)=\sin \pi x$ and the source function $f(x,t)$ is given by:
\begin{align*}
    f(x,t)=&\dfrac{\Gamma(\alpha+5)}{24}t^4\sin \pi x +\pi^2(1+t^{\alpha+4})(1+x)\sin \pi x+(1+t^{\alpha+4})\sin \pi x\\
    &+\Big(\dfrac{t^2}{2}+\dfrac{t^{\alpha+6}}{(\alpha+5)(\alpha+6)}\Big)\sin x\sin \pi x.
\end{align*}
Then, the exact solution for Example \ref{example1} is $\mathcal{U}(x,t)=(1+t^{\alpha+4})\sin \pi x$, which lies in $C^{4}(\overline{\Omega}_t)$. The proposed scheme leads to $\mathcal{O}(N^{-2})$ accurate solution even if the mesh is discretized uniformly (i.e., for $\nu=1$), as theoretically discussed in Remark \ref{Pdoc1_remark4}. Also, see the computational efficiency in Tables \ref{Ex1b_table1} \& \ref{Ex1b_table2}. It also highlights that the proposed scheme gives second-order accuracy for any value of $\alpha$ whereas, the $L1$ scheme exhibits almost first-order accuracy for $\alpha$ tends to one (see Table \ref{Ex1b_table2}). The theoretical convergence can also be confirmed from Figure \ref{Ex1b_fig_fig1}(a), which illustrates the maximum error for different values of $\alpha$ and $M=N$. Further, see the log-log plot in Figure \ref{Ex1b_fig_fig1}(b), which proves that the computational result matches the theoretical rate of convergence.
	
%%%%%%%%%%%%%%%%%%%%%%%%%%%%%%%%%%%%%%%%%%%%%%%%%%%%%%%%%%%%%%%%%%%%%%%%%%%%%%%%%%%%%%%%%%%%%%%%%%%%%%%%%%

\begin{figure}[ht]
	\begin{subfigure}{.5\textwidth}
		\centering
		% include first image
		\includegraphics[width=0.95\linewidth]{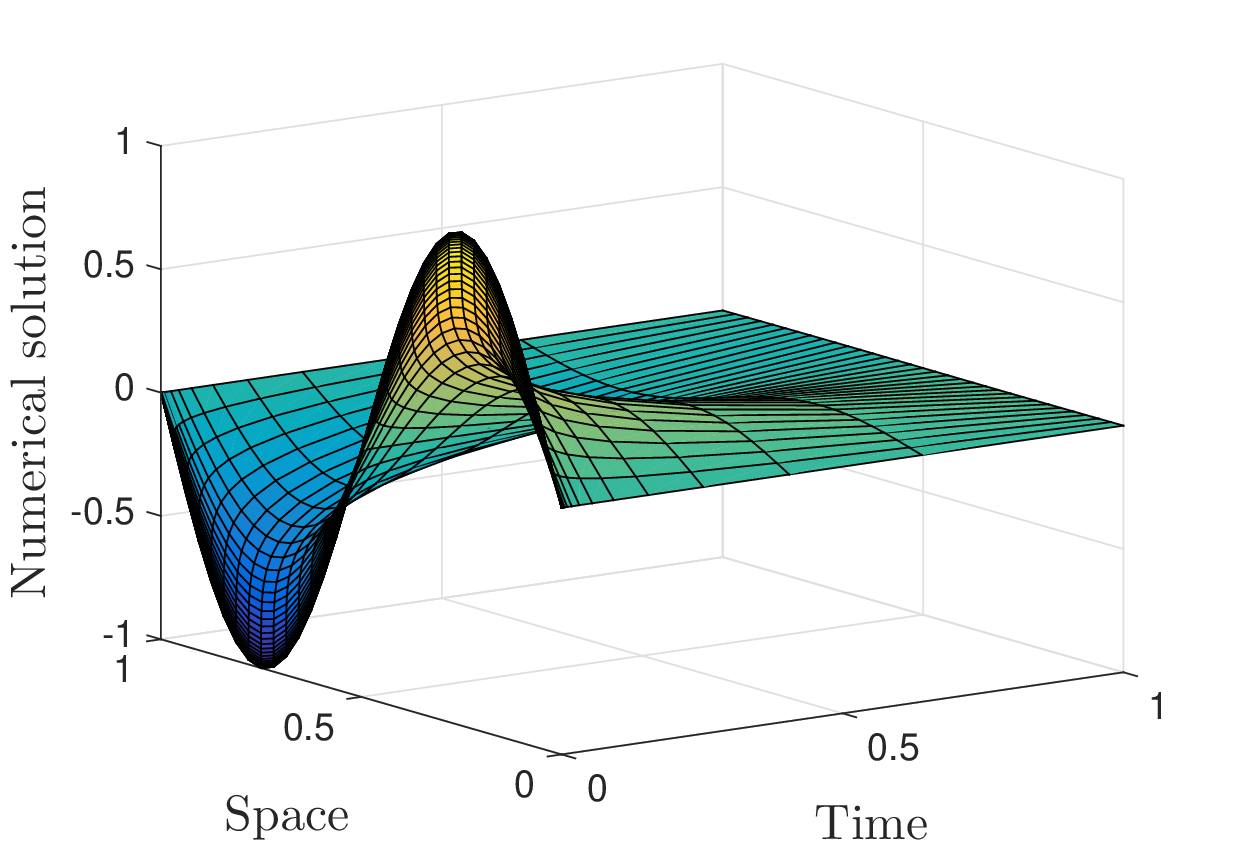}
		\caption{$\alpha=0.2$.}
		\label{Ex1a_Fig1}
	\end{subfigure}
	\begin{subfigure}{.5\textwidth}
		\centering
		% include second image
		\includegraphics[width=0.95\linewidth]{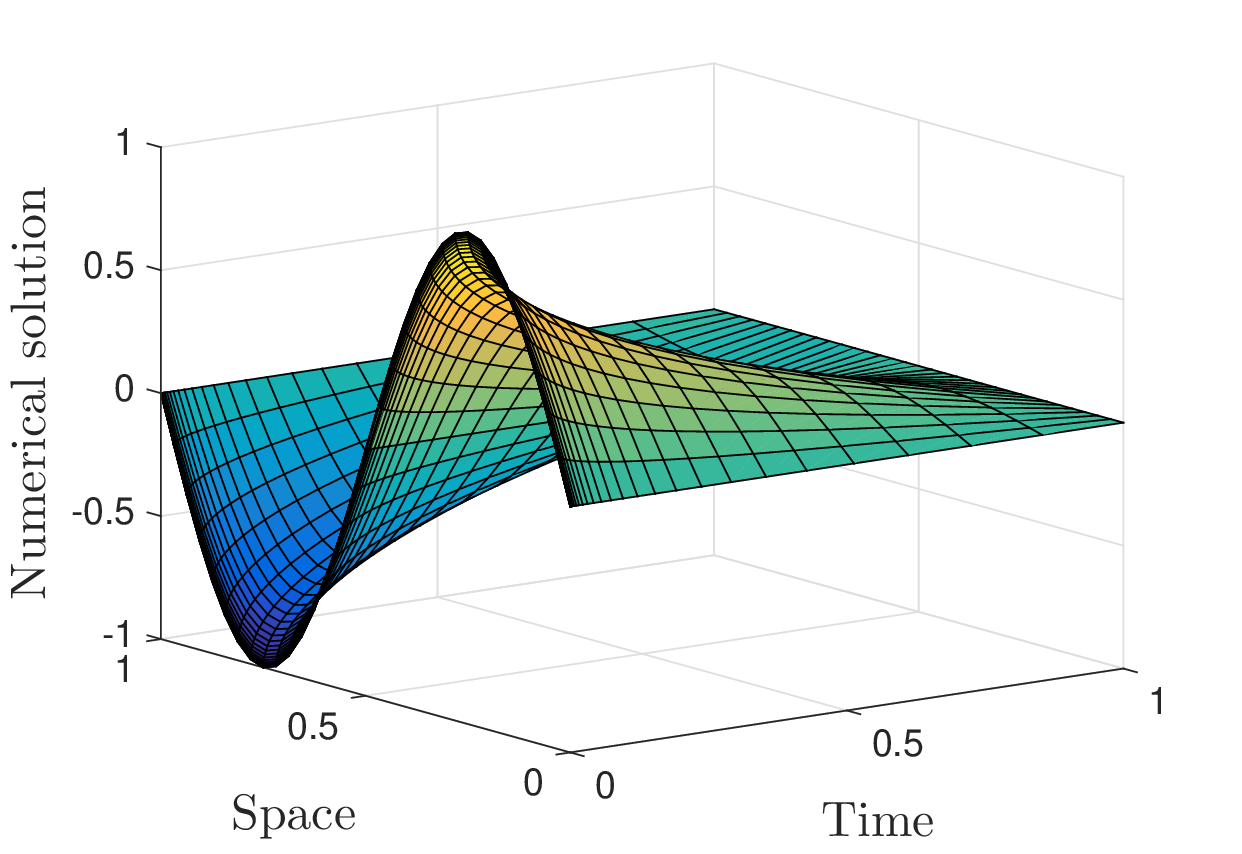}
		\caption{$\alpha=0.5$.}
		\label{Ex1a_Fig2}
	\end{subfigure}
	\caption{\small{Layer behaviour for Example \ref{example1} having solution with unbounded time derivatives at $t=0$ for $M=N=32$ on graded mesh with $\nu=(3-\alpha)/\alpha$.}}
	\label{Ex1a_fig_fig1}
\end{figure}

 %%%%%%%%%%%%%%%%%%%%%%%%%%%%%%%%%%%%%%%%%%%%%%%%%%%%%%%%%%%%%%%%%%%%%%%%%%%%%%%%%%%%%%%%%%%%%%%%%%%%%%%%%

\begin{table}[ht]
	\caption {Maximum error (Max-Error) and rate of convergence (RoC) with fixed $M=1500$ for Example \ref{example1} with solution {\em having time singularity (graded mesh $\nu=2/\alpha$)}.}\label{Ex1a_table1}
	\begin{center}
		\begin{tabular}{llllllll}
			\hline
			$\alpha$ & $N$ & \multicolumn{2}{l}{$L2$-$1_\sigma$ scheme} &&& \multicolumn{2}{l}{$L1$ scheme} \\ \cline{3-8}
			&  & Max-Error & RoC &&& Max-Error & RoC \\ \hline
			\multirow{4}{*}{$0.2$} 
                &32    & 6.6577e-4  & 1.9444  &&& 4.1215e-4  & 1.4017 \\
			&64    & 1.7298e-4  & 1.9719  &&& 1.5599e-4  & 1.5489 \\ 	
			&128   & 4.4095e-5  & 1.9857  &&& 5.3313e-5  & 1.6024 \\ 
			&256   & 1.1134e-5  &         &&& 1.7558e-5  &        \\\hline
			\multirow{4}{*}{$0.4$} 
                &32    & 4.6167e-4  & 1.9797  &&& 5.5038e-4  & 1.3610 \\
			&64    & 1.1705e-4  & 1.9897  &&& 2.1427e-4  & 1.4626 \\ 	
			&128   & 2.9472e-5  & 1.9947  &&& 7.7747e-5  & 1.5057 \\ 
			&256   & 7.3949e-6  &         &&& 2.7379e-5  &        \\\hline
                \multirow{4}{*}{$0.8$} 
                &32    & 1.2055e-4  & 1.9957  &&& 4.7026e-4  & 1.0439 \\
			&64    & 3.0227e-5  & 1.9985  &&& 2.2808e-4  & 1.1156 \\ 	
			&128   & 7.5647e-6  & 1.9452  &&& 1.0526e-4  & 1.1482 \\ 
			&256   & 1.9644e-6  &         &&& 4.7492e-5  &        \\\hline
		\end{tabular}
	\end{center}
\end{table}

% \begin{table}[ht]
% {\small
% 	\caption {Maximum error (Max-Error) and rate of convergence (RoC) for Example \ref{example1} with solution {\em having time singularity (graded mesh $\nu=(3-\alpha)/\alpha$)}.}\label{Ex1a_table1}
% 	\begin{center}
% 		\begin{tabular}{llllllll}
% 			\hline
% 			$\alpha$ & $N/M$ & \multicolumn{2}{l}{$L2$-$1_\sigma$ scheme} &&& \multicolumn{2}{l}{$L1$ scheme} \\ \cline{3-8}
% 			&  & Max-Error & RoC &&& Max-Error & RoC \\ \hline
% 			\multirow{4}{*}{$0.3$} 
%                 &32/16    & 1.0097e-2  & 2.0134  &&& 1.0630e-2  & 2.0001 \\
% 			&64/32    & 2.5007e-3  & 2.0053  &&& 2.6572e-3  & 1.9938 \\ 	
% 			&128/64   & 6.2289e-4  & 2.0024  &&& 6.6715e-4  & 1.9910 \\ 
% 			&256/128  & 1.5546e-4  &         &&& 1.6783e-4  &        \\\hline
% 			\multirow{4}{*}{$0.6$} 
%                 &32/16    & 1.0804e-2  & 2.0093  &&& 1.1167e-2  & 1.9922 \\
% 			&64/32    & 2.6837e-3  & 2.0027  &&& 2.8070e-3  & 1.9811 \\ 	
% 			&128/64   & 6.6967e-4  & 2.0009  &&& 7.1099e-4  & 1.9690 \\ 
% 			&256/128  & 1.6731e-4  &         &&& 1.8161e-4  &        \\\hline
%                 \multirow{4}{*}{$0.9$} 
%                 &32/16    & 1.1857e-2  & 2.0089  &&& 1.1839e-2  & 1.9829 \\
% 			&64/32    & 2.9461e-3  & 2.0023  &&& 2.9951e-3  & 1.9701 \\ 	
% 			&128/64   & 7.3535e-4  & 2.0006  &&& 7.6443e-4  & 1.9472 \\ 
% 			&256/128  & 1.8376e-4  &         &&& 1.9823e-4  &        \\\hline
% 		\end{tabular}
% 	\end{center}}
% \end{table}

%%%%%%%%%%%%%%%%%%%%%%%%%%%%%%%%%%%%%%%%%%%%%%%%%%%%%%%%%%%%%%%%%%%%%%%%%%%%%%%%%%%%%%%%%%%%%%%%%%%%%%%%%

\begin{table}[ht]
	\caption {Maximum error (Max-Error) and rate of convergence (RoC) with fixed $M=1500$ for Example \ref{example1} with solution {\em having time singularity (uniform mesh $\nu=1$)}.}\label{Ex1a_table2}
	\begin{center}
		\begin{tabular}{llllllll}
			\hline
			$\alpha$ &  & \multicolumn{2}{l}{$L2$-$1_\sigma$ scheme} &&& \multicolumn{2}{l}{$L1$ scheme} \\ \cline{2-4} \cline{6-8}
			& $N$ & Max-Error & RoC && $N$ & Max-Error & RoC \\ \hline
			\multirow{4}{*}{$0.2$} 
                &4   & 6.4910e-2  & 0.2097  && 256  & 3.2807e-3  & 0.0193 \\
			&8   & 5.6130e-2  & 0.2083  && 512  & 3.2372e-3  & 0.0216 \\ 	
			&16  & 4.8584e-2  & 0.2089  && 1024 & 3.1891e-3  & 0.0241 \\ 
			&32  & 4.2035e-2  &         && 2048 & 3.1362e-3  &        \\\hline
			\multirow{4}{*}{$0.4$} 
			&4   & 7.9125e-2  & 0.4206  && 256  & 4.1287e-3  & 0.0953 \\ 	
			&8   & 5.9113e-2  & 0.4263  && 512  & 3.8647e-3  & 0.1134 \\ 
			&16  & 4.3990e-2  & 0.4350  && 1024 & 3.5724e-3  & 0.1334 \\
                &32  & 3.2538e-2  &         && 2048 & 3.2568e-3  &        \\\hline
                \multirow{4}{*}{$0.8$} 
			&4   & 3.2528e-2  & 0.9001  && 256  & 1.1142e-3  & 0.5693 \\ 	
			&8   & 1.7430e-2  & 0.9768  && 512  & 7.5088e-4  & 0.6466 \\ 
			&16  & 8.8561e-3  & 1.1075  && 1024 & 4.7964e-4  & 0.6875 \\
                &32  & 4.1100e-3  &         && 2048 & 2.9782e-4  &        \\\hline
		\end{tabular}
	\end{center}
\end{table}

%%%%%%%%%%%%%%%%%%%%%%%%%%%%%%%%%%%%%%%%%%%%%%%%%%%%%%%%%%%%%%%%%%%%%%%%%%%%%%%%%%%%%%%%%%%%%%%%%%%%%%%%%

\begin{figure}[ht]
	\begin{subfigure}{.5\textwidth}
		\centering
		% include first image
		\includegraphics[width=0.95\linewidth]{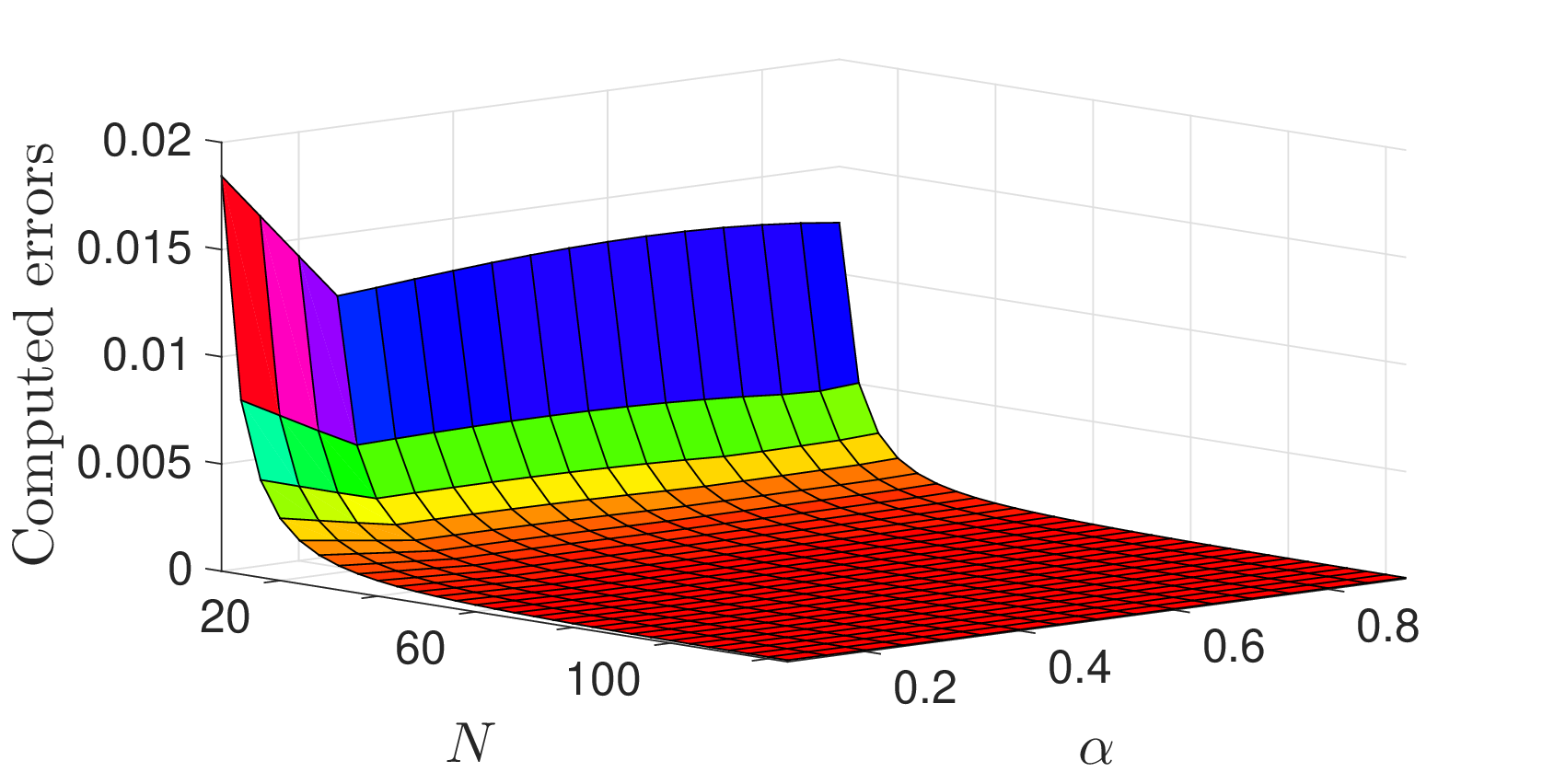}
		\caption{Computed errors.}
		\label{Ex1b_Fig1}
	\end{subfigure}
	\begin{subfigure}{.5\textwidth}
		\centering
		% include second image
		\includegraphics[width=0.95\linewidth]{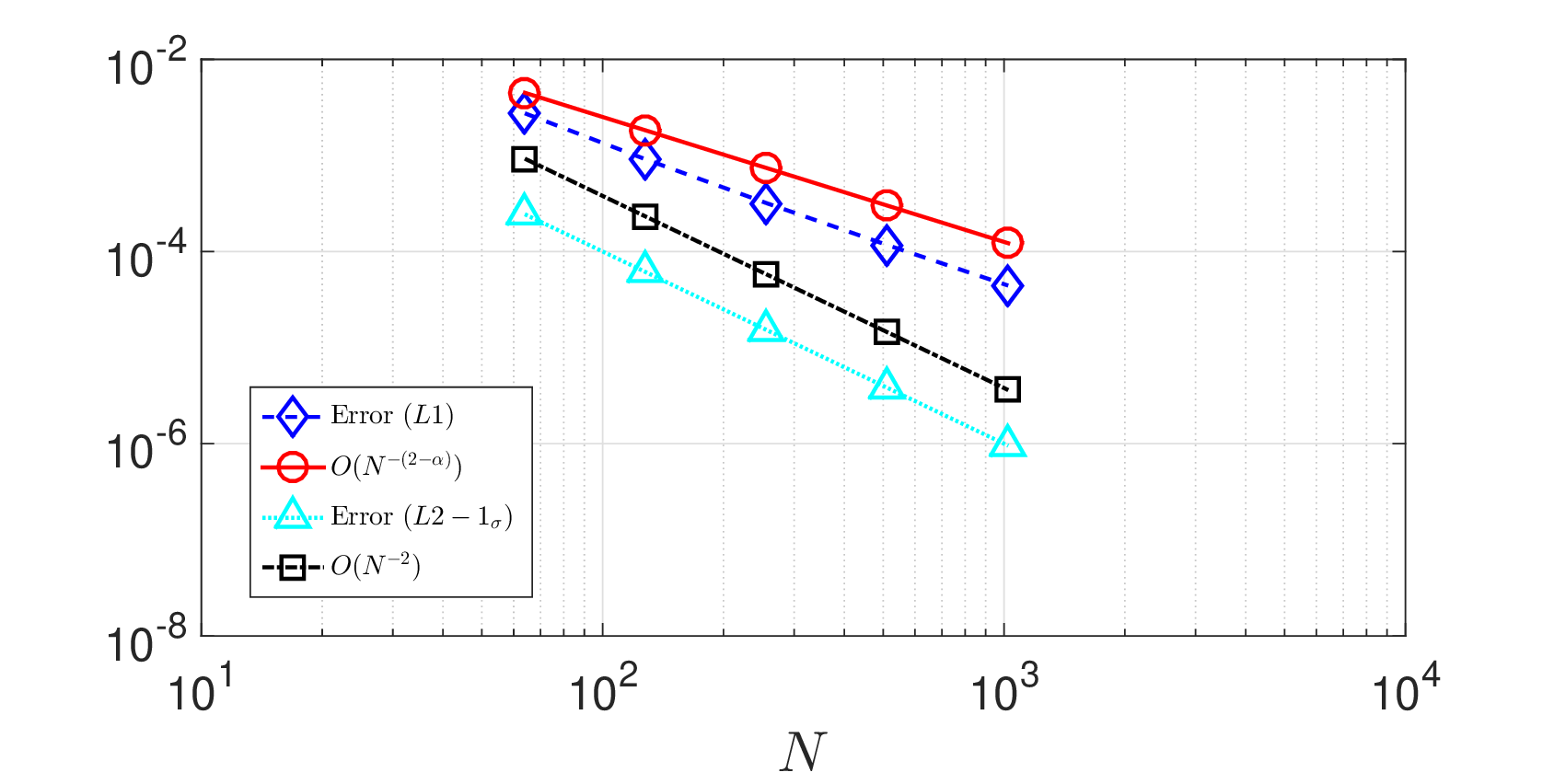}
		\caption{Log-log plot with $\alpha=0.7$.}
		\label{Ex1b_Fig2}
	\end{subfigure}
	\caption{\small{Error and log-log plots for Example \ref{example1}.}}
	\label{Ex1b_fig_fig1}
\end{figure}
	
%%%%%%%%%%%%%%%%%%%%%%%%%%%%%%%%%%%%%%%%%%%%%%%%%%%%%%%%%%%%%%%%%%%%%%%%%%%%%%%%%%%%%%%%%%%%%%%%%%%%%%%%%
 
\begin{table}[ht]
	\caption {Maximum error (Max-Error) and rate of convergence (RoC) based on $L2$-$1_\sigma$ scheme with fixed $M=1000$ for Example \ref{example1} having solution $\mathcal{U}\in C^4(\overline{\Omega}_t)$ {\em(uniform mesh $\nu=1$)}.}\label{Ex1b_table1}
	\begin{center}
		\begin{tabular}{lllllllll}
			\hline
			$N$ & \multicolumn{2}{l}{$\alpha=0.2$} && \multicolumn{2}{l}{$\alpha=0.5$}  && \multicolumn{2}{l}{$\alpha=0.8$} \\ \cline{2-9}
			 & Max-Error & RoC && Max-Error & RoC && Max-Error & RoC \\ \hline
			16   & 2.1106e-3  & 1.9787 && 5.1199e-3 & 1.9856 && 7.5374e-3 & 1.9952 \\ 
			32   & 5.3550e-4  & 2.0001 && 1.2928e-3 & 1.9987 && 1.8907e-3 & 2.0032 \\ 	
			64   & 1.3387e-4  & 2.0414 && 3.2351e-4 & 2.0171 && 4.7161e-4 & 2.0150 \\ 
			128  & 3.2521e-5  & 2.2008 && 7.9926e-5 & 2.0782 && 1.1668e-4 & 2.0551 \\ 
			256  & 7.0739e-6  &        && 1.8928e-5 &        && 2.8078e-5 &        \\ \hline
		\end{tabular}
	\end{center}
\end{table}

%%%%%%%%%%%%%%%%%%%%%%%%%%%%%%%%%%%%%%%%%%%%%%%%%%%%%%%%%%%%%%%%%%%%%%%%%%%%%%%%%%%%%%%%%%%%%%%%%%%%%%%%%

\begin{table}[ht]
	\caption {Maximum error (Max-Error) and rate of convergence (RoC) with fixed $M=1000$ for Example \ref{example1} having solution $\mathcal{U}\in C^4(\overline{\Omega}_t)$ {\em (uniform mesh $\nu=1$)}.}\label{Ex1b_table2}
	\begin{center}
		\begin{tabular}{llllllll}
			\hline
			$\alpha$ & $N$ & \multicolumn{2}{l}{$L2$-$1_\sigma$ scheme} &&& \multicolumn{2}{l}{$L1$ scheme} \\ \cline{3-8}
			&  & Max-Error & RoC &&& Max-Error & RoC \\ \hline
			\multirow{5}{*}{$0.3$} 
			&16   & 3.1525e-3  & 1.9803  &&& 8.3736e-4  & 1.5728 \\ 	
			&32   & 7.9896e-4  & 1.9978  &&& 2.8149e-4  & 1.5959 \\ 
			&64   & 2.0004e-4  & 2.0266  &&& 9.3123e-5  & 1.5874 \\ 
			&128  & 4.9095e-5  & 2.1291  &&& 3.0989e-5  & 1.5138 \\ 
			&256  & 1.1223e-5  &         &&& 1.0852e-5  &        \\ \hline
			\multirow{5}{*}{$0.9$} 
			&16   & 8.1421e-3  & 1.9975  &&& 2.0193e-2  & 1.0539 \\ 	
			&32   & 2.0391e-3  & 2.0038  &&& 9.7258e-3  & 1.0757 \\ 
			&64   & 5.0844e-4  & 2.0140  &&& 4.6143e-3  & 1.0869 \\ 
			&128  & 1.2588e-4  & 2.0505  &&& 2.1723e-3  & 1.0923 \\ 
			&256  & 3.0387e-5  &         &&& 1.0188e-3  &        \\ \hline
		\end{tabular}
	\end{center}
\end{table}

%%%%%%%%%%%%%%%%%%%%%%%%%%%%%%%%%%%%%%%%%%%%%%%%%%%%%%%%%%%%%%%%%%%%%%%%%%%%%%%%%%%%%%%%%%%%%%%%%%%%%%%%%%%%%%%%%%%%%%%%%%%%%%%%%

\begin{example}\label{example2}
Consider another test problem having a known exact solution with $\mathcal{U}\in C^3(\overline{\Omega}_t)$.
\begin{equation*}
    \left\{
    \begin{array}{ll}
    \partial_t^\alpha \mathcal{U}(x,t)-\mathcal{U}_{xx}+\displaystyle{\int_0^t} x(t-\xi)\mathcal{U}(x,\xi)d\xi=f(x,t),~(x,t)\in\Omega\times\Omega_t, \\[4pt]
    \mbox{with initial and boundary conditions:}\\[4pt]
    \mathcal{U}(x,0)=0\;\; \mbox{for}~ x\in \overline{\Omega},~
    \mathcal{U}(0,t)=t+t^{\alpha+3},~ \mathcal{U}(1,t)=0\;\; \mbox{for}~ t\in \overline{\Omega}_t,
    \end{array}\right.
\end{equation*}
\end{example}
\noindent where $\alpha\in(0,1)$. $f(x,t)$ is given by: 
\[f(x,t)=(1-x^2)\Big(\dfrac{t^{1-\alpha}}{\Gamma(2-\alpha)}+\dfrac{1}{6}\Gamma(\alpha+4)t^3\Big)+2(t+t^{\alpha+3})+x(1-x^2)\Big(\dfrac{t^3}{6}+\dfrac{t^{\alpha+5}}{(\alpha+4)(\alpha+5)}\Big).\] 
The exact solution for Example \ref{example2} is $\mathcal{U}(x,t)=(1-x^2)(t+t^{\alpha+3})$. Notice that the solution of the present problem lies in $C^{3}(\overline{\Omega}_t)$. Here, we calculate the error and the rate of convergence in a similar way as defined in 
(\ref{Pdoc1_18}). For the present scenario, it can be observed that the proposed scheme (\ref{Pdoc1_12}) not only gives higher order accuracy (see log-log plot in Figure \ref{Ex2_fig_fig1}(b)) but also the obtained errors are less (see maximum error plot in Figure \ref{Ex2_fig_fig1}(a)) compared to the $L1$ scheme even if the mesh is discretized uniformly. The computational efficiency can also be confirmed from Table \ref{Ex2_table1}, which demonstrates that the proposed scheme can lead to a higher-order accuracy in time possibly $\mathcal{O}(N^{-2})$ for a sufficiently smooth solution $\mathcal{U}\in C^3(\overline{\Omega}_t)$.

%%%%%%%%%%%%%%%%%%%%%%%%%%%%%%%%%%%%%%%%%%%%%%%%%%%%%%%%%%%%%%%%%%%%%%%%%%%%%%%%%%%%%%%%%%%%%%%%%%%%%%%%%%

\begin{figure}[ht]
	\begin{subfigure}{.5\textwidth}
		\centering
		% include first image
		\includegraphics[width=0.95\linewidth]{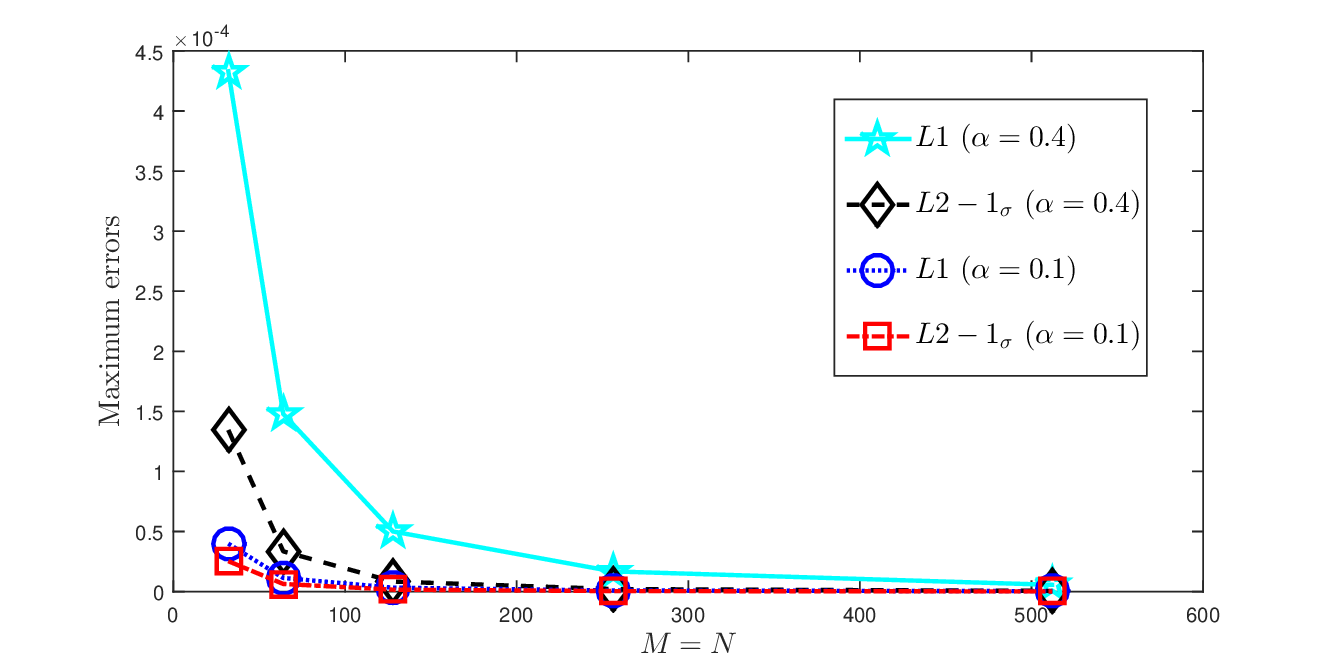}
		\caption{Maximum errors.}
		\label{Ex2_Fig1}
	\end{subfigure}
	\begin{subfigure}{.5\textwidth}
		\centering
		% include second image
		\includegraphics[width=0.95\linewidth]{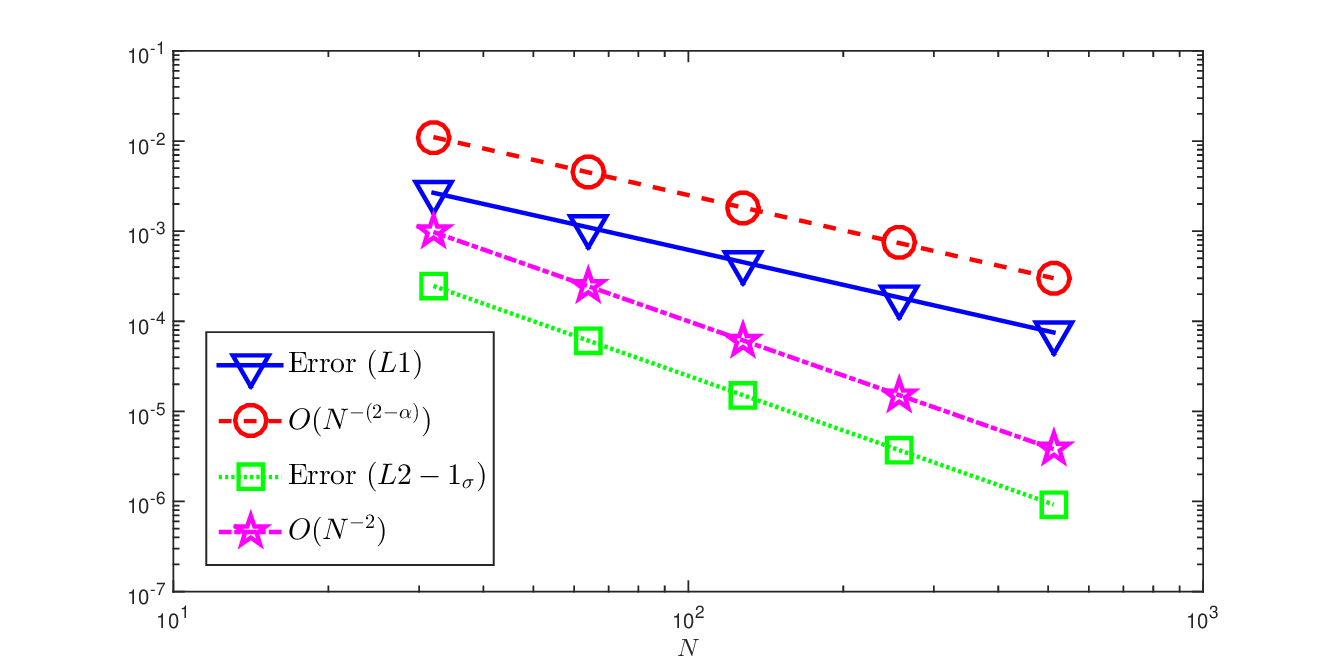}
		\caption{Log-log plot with $\alpha=0.7$.}
		\label{Ex2_Fig2}
	\end{subfigure}
	\caption{\small{Comparison between $L1$ and $L2$-$1_\sigma$ schemes for Example \ref{example2}.}}
	\label{Ex2_fig_fig1}
\end{figure}

%%%%%%%%%%%%%%%%%%%%%%%%%%%%%%%%%%%%%%%%%%%%%%%%%%%%%%%%%%%%%%%%%%%%%%%%%%%%%%%%%%%%%%%%%%%%%%%%%%%%%%%%

\begin{table}[ht]
{\small
	\caption {Maximum error (Max-Error) and rate of convergence (RoC) with fixed $M=1000$ for Example \ref{example2} having solution $\mathcal{U}\in C^3(\overline{\Omega}_t)$ {\em (uniform mesh $\nu=1$)}.}\label{Ex2_table1}
	\begin{center}
		\begin{tabular}{llllllll}
			\hline
			$\alpha$ & $N$ & \multicolumn{2}{l}{$L2$-$1_\sigma$ scheme} &&& \multicolumn{2}{l}{$L1$ scheme} \\ \cline{3-8}
			&  & Max-Error & RoC &&& Max-Error & RoC \\\hline
			\multirow{5}{*}{$0.2$} 
			&32   & 6.0786e-5  & 2.0014  &&& 1.0164e-4  & 1.7219 \\ 
			&64   & 1.5182e-5  & 2.0020  &&& 3.0811e-5  & 1.7361 \\ 	
			&128  & 3.7901e-6  & 2.0018  &&& 9.2490e-6  & 1.7469 \\ 
			&256  & 9.4637e-7  & 2.0012  &&& 2.7556e-6  & 1.7555 \\ 
			&512  & 2.3639e-7  &         &&& 8.1611e-7  &        \\ \hline
			\multirow{5}{*}{$0.5$} 
			&32   & 1.7173e-4  & 2.0094  &&& 8.1771e-4  & 1.4612 \\ 
			&64   & 4.2654e-5  & 2.0086  &&& 2.9699e-4  & 1.4740 \\ 	
			&128  & 1.0600e-5  & 2.0071  &&& 1.0691e-4  & 1.4823 \\ 
			&256  & 2.6369e-6  & 2.0055  &&& 3.8264e-5  & 1.4879 \\ 
			&512  & 6.5669e-7  &         &&& 1.3643e-5  &        \\ \hline
			\multirow{5}{*}{$0.8$} 
			&32   & 2.8372e-4  & 2.0160  &&& 4.6527e-3  & 1.1799 \\ 
			&64   & 7.0147e-5  & 2.0159  &&& 2.0537e-3  & 1.1889 \\ 	
			&128  & 1.7345e-5  & 2.0149  &&& 9.0082e-4  & 1.1939 \\ 
			&256  & 4.2918e-6  & 2.0135  &&& 3.9378e-4  & 1.1966 \\ 
			&512  & 1.0629e-6  &         &&& 1.7181e-4  &        \\ \hline
		\end{tabular}
	\end{center}}
\end{table}

%%%%%%%%%%%%%%%%%%%%%%%%%%%%%%%%%%%%%%%%%%%%%%%%%%%%%%%%%%%%%%%%%%%%%%%%%%%%%%%%%%%%%%%%%%%%%%%%%%%%%%%%%%%%%%%%%%%%%%%%%%%%%%%%%%%%%%%%%%%%%%%%%%%%%

Below, we present another example of the type (\ref{Pdoc1_1}) for which the exact solution is unknown. The double mesh principle \cite{SantraAnalysis2021} is used to calculate the error and the order of convergence, which works as follows: 
Suppose $\mathcal{U}_m^n$ be the numerical solution for $m=0,1,\ldots, M$ and $n=0,1,\ldots, N$. Now consider a fine mesh $\big\{(x_{m},t_{n/2})\;\lvert\;m=0,1,\ldots, M \mbox{ and}\; n=0,1,\ldots, 2N\big\}$, and let $\mathcal{V}_{m}^{n/2}$ be the corresponding computed solution by using the same scheme. Then the error and the corresponding temporal rate of convergence are estimated as:
\begin{align}
    \tilde{E}_{M,N}:=\max_{(x_m,t_n)\in\varOmega^{M,N}}\Big\lvert\mathcal{U}_m^n-\mathcal{V}_m^n\Big\rvert,~\tilde{P}_{M,N}:=\log_2\Bigg(\dfrac{\tilde{E}_{M,N}}{\tilde{E}_{M,2N}}\Bigg).\label{Pdoc1_19}
\end{align}
\begin{example}\label{example3}
Consider the following TFIPDE with an unknown exact solution.
\begin{equation*}
\left\{
    \begin{array}{ll}
    \partial_t^\alpha \mathcal{U}(x,t)-\mathcal{U}_{xx}+x\mathcal{U}(x,t)+\displaystyle{\int_0^t} \mathcal{K}(x,t-\xi)\mathcal{U}(x,\xi)d\xi=xt^{\alpha},~(x,t)\in\Omega\times\Omega_t, \\[4pt]
    \mbox{with initial and boundary conditions:}\\[4pt]
    \mathcal{U}(x,0)=0\;\; \mbox{for}~ x\in \overline{\Omega},~
    \mathcal{U}(0,t)=\mathcal{U}(1,t)=0\;\; \mbox{for}~ t\in \overline{\Omega}_t,
    \end{array}\right.
\end{equation*}
\end{example}
\noindent where $\alpha\in(0,1)$, and the kernel $\mathcal{K}=e^{x(t-\xi)},~\xi\in[0,t],~(x,t)\in\overline{\Omega}\times\overline{\Omega}_t$.
The nature of the solution to this example is displayed graphically in Figure \ref{Ex3_fig_fig1}. It clearly indicates that the solution of such fractional-order integro-partial differential equation has a mild singularity at $t=0$ for which the graded mesh-based $L2$-$1_\sigma$ scheme will be more effective than the uniform mesh. Since the exact solution is unknown, we calculate the error and the rate of convergence by using the formula depicted in (\ref{Pdoc1_19}), and the data are tabulated in Table \ref{Ex3_table1}. It can be confirmed that the proposed scheme leads to a higher-order accuracy possibly $\mathcal{O}(N^{-2})$ on a non-uniform mesh with a suitably chosen grading parameter, in this case, it is $\nu\geq2/\alpha$. See the theoretical arguments given in Theorem \ref{Pdoc1_thm2} and further elaboration in Remark \ref{Pdoc1_remark1}. In contrast, it reduces to $\mathcal{O}(N^{-\alpha})$ accurate solution on a uniformly distributed mesh, but, computationally, we are getting a better approximation for higher values of $\alpha$.

%%%%%%%%%%%%%%%%%%%%%%%%%%%%%%%%%%%%%%%%%%%%%%%%%%%%%%%%%%%%%%%%%%%%%%%%%%%%%%%%%%%%%%%%%%%%%%%%%%%%%%%%%

 \begin{figure}[ht]
	\begin{subfigure}{.5\textwidth}
		\centering
		% include first image
		\includegraphics[width=0.95\linewidth]{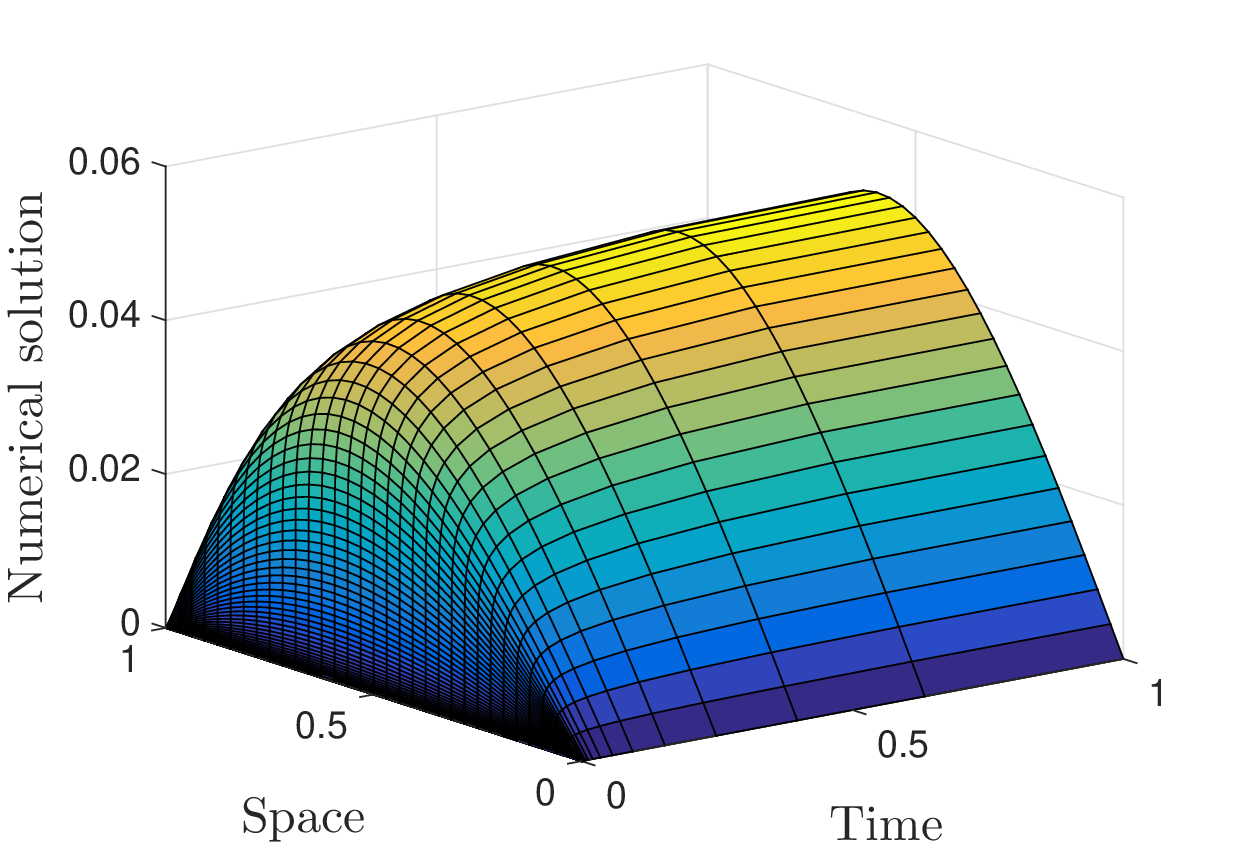}
		\caption{$\alpha=0.1$.}
		\label{Ex3_Fig1}
	\end{subfigure}
	\begin{subfigure}{.5\textwidth}
		\centering
		% include second image
		\includegraphics[width=0.95\linewidth]{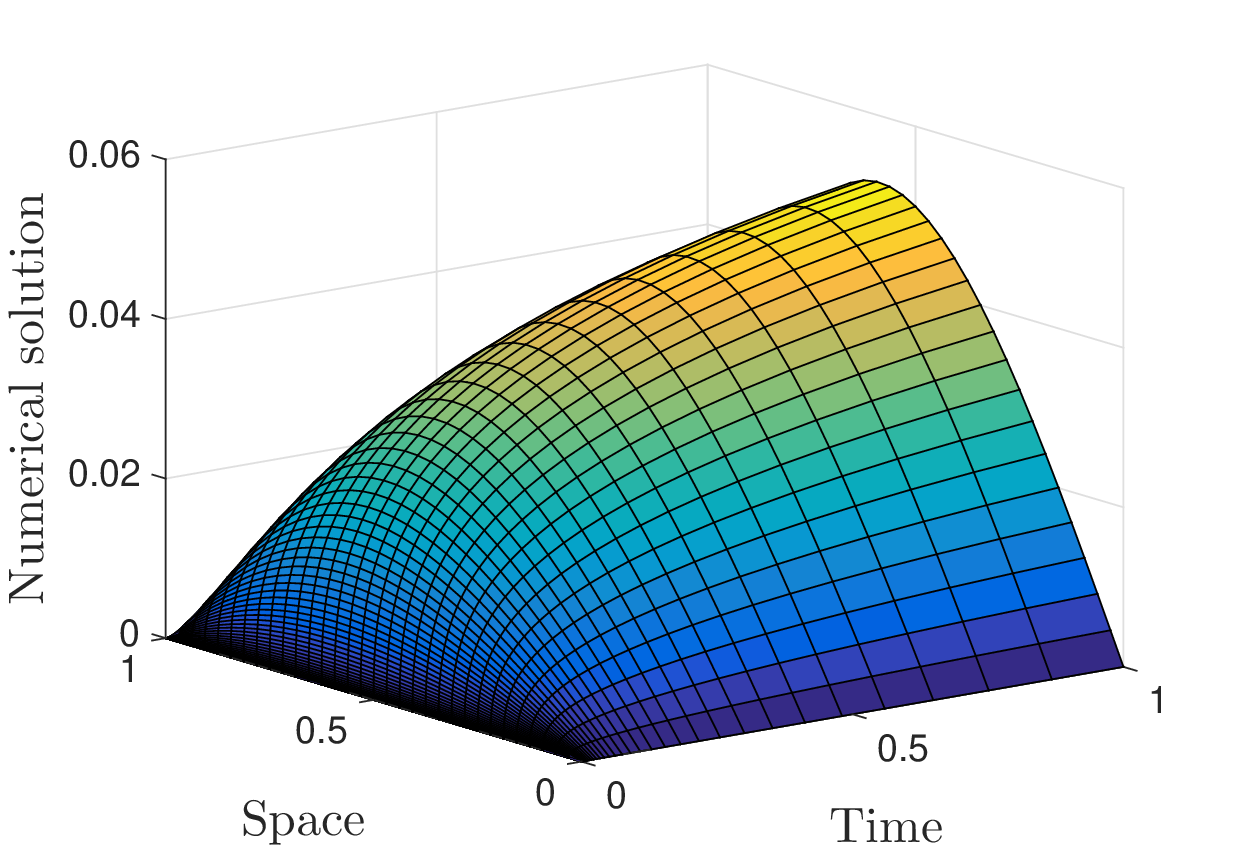}
		\caption{$\alpha=0.3$.}
		\label{Ex3_Fig2}
	\end{subfigure}
	\caption{\small{Layer behaviour for Example \ref{example3} with $M=32,~N=64$.}}
	\label{Ex3_fig_fig1}
\end{figure}
    
%%%%%%%%%%%%%%%%%%%%%%%%%%%%%%%%%%%%%%%%%%%%%%%%%%%%%%%%%%%%%%%%%%%%%%%%%%%%%%%%%%%%%%%%%%%%%%%%%%%%%%%%%

\begin{table}[ht]
	\caption {Maximum error (Max-Error) and rate of convergence (RoC) with fixed $M=1000$ based on $L2$-$1_\sigma$ scheme for Example \ref{example3}.}\label{Ex3_table1}
	\begin{center}
		\begin{tabular}{llllllllllll}
			\hline
			$\alpha$ & $N$ & \multicolumn{6}{c}{Graded mesh} &&& \multicolumn{2}{c}{Uniform mesh} \\ \cline{3-8} \cline{11-12}
                         &     & \multicolumn{2}{c}{$\nu=2/\alpha$} &&& \multicolumn{2}{c}{$\nu=(3-\alpha)/\alpha$} &&& \multicolumn{2}{c}{$\nu=1$}\\\cline{3-8} \cline{11-12}
			&  & Max-Error & RoC &&& Max-Error & RoC &&& Max-Error & RoC \\ \hline
			\multirow{4}{*}{$0.2$}  
			&32  & 4.0357e-5  & 1.9063  &&& 7.3962e-5  & 1.8646 &&& 1.6517e-3  & 0.3105 \\ 	
			&64  & 1.0766e-5  & 1.9512  &&& 2.0309e-5  & 1.9289 &&& 1.3318e-3  & 0.3193 \\ 
			&128 & 2.7842e-6  & 1.9746  &&& 5.3337e-6  & 1.9631 &&& 1.0674e-3  & 0.3303 \\ 
                &256 & 7.0839e-7  &         &&& 1.3680e-6  &        &&& 8.4893e-4  &        \\ \hline
			\multirow{4}{*}{$0.4$} 
			&32  & 2.6764e-5  & 1.9634  &&& 4.3962e-5  & 1.9484 &&& 7.0826e-4  & 0.7889 \\ 	
			&64  & 6.8628e-6  & 1.9808  &&& 1.1391e-5  & 1.9734 &&& 4.0992e-4  & 0.8172 \\ 
			&128 & 1.7386e-6  & 1.9897  &&& 2.9008e-6  & 1.9859 &&& 2.3264e-4  & 0.8325 \\ 
                &256 & 4.3778e-7  &         &&& 7.3234e-7  &        &&& 1.3064e-4  &        \\ \hline
			\multirow{4}{*}{$0.6$} 
			&32  & 1.6741e-5  & 1.9812  &&& 2.3719e-5  & 1.9757 &&& 1.2291e-4  & 1.2085 \\ 	
			&64  & 4.2403e-6  & 1.9897  &&& 6.0307e-6  & 1.9870 &&& 5.3184e-5  & 1.1834 \\ 
			&128 & 1.0677e-6  & 1.9940  &&& 1.5213e-6  & 1.9926 &&& 2.3417e-5  & 1.1753 \\ 
                &256 & 2.6804e-7  &         &&& 3.8228e-7  &        &&& 1.0369e-5  &        \\ \hline
		\end{tabular}
	\end{center}
\end{table}
%%%%%%%%%%%%%%%%%%%%%%%%%%%%%%%%%%%%%%%%%%%%%%%%%%%%%%%%%%%%%%%%%%%%%%%%%%%%%%%%%%%%%%%%%%%%%%%%%%%%%%%%%%%%%%%%%%%

The following example shows the strong reliability of the wavelet-based numerical approximation combined with graded mesh-based $L2$-$1_\sigma$ scheme in the context of the given TFIPDEs (\ref{Pdoc1_1}) with $\Omega\subset\mathbb{R}^2$, whose solution has an unbounded time derivative at $t=0$. Also, note that the convergence is unaltered for a sufficiently smooth solution even if the mesh is discretized uniformly.  The maximum error $\mathscr{E}_N^{M_1M_2}$ ($L^\infty$-error) and the $L^2$-error $\widetilde{\mathscr{E}}_N^{M_1M_2}$ can be estimated as:
\begin{equation}\label{Pdoc1_49}
\left\{
\begin{array}{ll}
    \mathscr{E}_N^{M_1M_2}=&\displaystyle{\max_{1\leq n\leq N}\max_{1\leq l_1\leq 2M_1}\max_{1\leq l_2\leq 2M_2}}\Big\lvert\mathcal{U}(x_{l_1},y_{l_2},t_n)-\mathcal{U}_{M_1M_2}^n(x_{l_1},y_{l_2})\Big\rvert,\\[6pt]
    \widetilde{\mathscr{E}}_N^{M_1M_2}=&\Bigg[\dfrac{1}{2M_1\times 2M_2\times (N+1)}\displaystyle{\sum_{l_1=1}^{2M_1}\sum_{l_2=1}^{2M_2}\sum_{n=0}^{N}}\Big\lvert\mathcal{U}(x_{l_1},y_{l_2},t_n)-\mathcal{U}_{M_1M_2}^n(x_{l_1},y_{l_2})\Big\rvert^2\Bigg]^{1/2}.
\end{array}\right.    
\end{equation} 
The corresponding temporal rate of convergence can be calculated by the usual way as:
\begin{align}
    \mathscr{R}_N^{M_1M_2}=\log_2\Bigg(\dfrac{\mathscr{E}_N^{M_1M_2}}{\mathscr{E}_{2N}^{M_1M_2}}\Bigg),~\widetilde{\mathscr{R}}_N^{M_1M_2}=\log_2\Bigg(\dfrac{\widetilde{\mathscr{E}}_N^{M_1M_2}}{\widetilde{\mathscr{E}}_{2N}^{M_1M_2}}\Bigg).\label{Pdoc1_50}
\end{align} 
%%%%%%%%%%%%%%%%%%%%%%%%%%%%%%%%%%%%%%%%%%%%%%%%%%%%%%%%%%%%%%%%%%%%%%%%%%%%%%%%%%%%%%%%%%%%%%%%%%%%%%%%%%
 \begin{example}\label{example4}
Consider the following two-dimensional TFIPDE having known exact solution with bounded/unbounded time derivative at $t=0$.
\begin{equation*}
    \left\{
    \begin{array}{ll}
    \partial_t^\alpha \mathcal{U}(x,y,t)-\mathcal{U}_{xx}-\mathcal{U}_{yy}+\mathcal{U}_{x}+\displaystyle{\int_0^t} xy(t-\xi)\mathcal{U}(x,y,\xi)d\xi=f(x,y,t),~(x,y,t)\in\Omega\times\Omega_t, \\[4pt]
    \mbox{with initial and boundary conditions:}\\[4pt]
    \mathcal{U}(x,y,0)=xy(x-1)(y-1)\;\; \mbox{for}~ (x,y)\in \overline{\Omega},\\
    \mathcal{U}(0,y,t)=\mathcal{U}(1,y,t)=0\;\; \mbox{for}~ (y,t)\in [0,1]\times\overline{\Omega}_t,\\
    \mathcal{U}(x,0,t)=\mathcal{U}(x,1,t)=0\;\; \mbox{for}~ (x,t)\in [0,1]\times\overline{\Omega}_t,
    \end{array}\right.
\end{equation*}
\end{example}  
\noindent where $\alpha\in(0,1)$. The source function $f$ is given by:
 \begin{align*}
     f(x,y,t)=&\Gamma(\alpha+1)xy(x-1)(y-1)-2(1+t^{\alpha})(x^2+y^2-x-y)\\
     &+(1+t^{\alpha})(2x-1)(y^2-y)+\Big(\dfrac{t^2}{2}+\dfrac{t^{\alpha+2}}{(\alpha+1)(\alpha+2)}\Big)x^2y^2(x-1)(y-1).
\end{align*}
The exact solution for Example \ref{example4} is then $\mathcal{U}=(1+t^{\alpha})xy(x-1)(y-1)$. To solve the model, the graded mesh-based $L2$-$1_\sigma$ scheme is used to make it in a semi-discrete form as described in (\ref{Pdoc1_25}). Then, the two-dimensional Haar wavelet is applied to approximate the solution at each time level. Notice that the solution has an unbounded time derivative at $t=0$, which reduces the convergence rate on a uniformly distributed temporal mesh (see Table \ref{Ex4a_table2}). In contrast, the non-uniform mesh with a suitably chosen grading parameter ($\nu\geq2/\alpha$) leads to a higher-order accuracy, possibly $\mathcal{O}(N^{-2})$, which is evident from Table \ref{Ex4a_table1} illustrating the error and the rate of convergence based on $L^2$- \& $L^\infty$-norm. Also, see the theoretical explanation in Theorem \ref{Pdoc1_thm7} and further discussion in Remarks \ref{Pdoc1_remark2} \& \ref{Pdoc1_remark3}.
 
If the source function is chosen as:
\begin{align*}
    f(x,y,t)=&\dfrac{1}{6}\Gamma(\alpha+4)xy(x-1)(y-1)t^3-2(1+t^{\alpha+3})(x^2+y^2-x-y)\\
     &+(1+t^{\alpha+3})(2x-1)(y^2-y)+\Big(\dfrac{t^2}{2}+\dfrac{t^{\alpha+5}}{(\alpha+4)(\alpha+5)}\Big)x^2y^2(x-1)(y-1).
\end{align*}
Then, the exact solution for Example \ref{example4} is given by $\mathcal{U}=(1+t^{\alpha+3})xy(x-1)(y-1)$, which lies in $C^{3}(\overline{\Omega}_t)$. In this scenario, the proposed scheme can lead to second-order accuracy even if the temporal mesh is discretized uniformly, as theoretically discussed in Remark \ref{Pdoc1_remark4}. See the computational results in Table \ref{Ex4b_table1}.

%%%%%%%%%%%%%%%%%%%%%%%%%%%%%%%%%%%%%%%%%%%%%%%%%%%%%%%%%%%%%%%%%%%%%%%%%%%%%%%%%%%%%%%%%%%%%%%%%%%%%%%%%

\begin{table}[ht]
\caption {Error ($L^2$ \& $L^\infty$) and rate of convergence (RoC) based on $L2$-$1_\sigma$ scheme with $2M_1=2M_2=32$ for Example \ref{example4} with solution {\em having time singularity (graded mesh $\nu=2/\alpha,~(3-\alpha)/\alpha$)}.}\label{Ex4a_table1}
	\begin{center}
		\begin{tabular}{llllllllllllll}
			\hline
			$\alpha$ & $N$ & \multicolumn{5}{c}{$\nu=2/\alpha$} &&& \multicolumn{5}{c}{$\nu=(3-\alpha)/\alpha$} \\ \cline{3-7}\cline{10-14}
                 &  & $L^2$-error & RoC    && $L^\infty$-error & RoC &&& $L^2$-error & RoC    && $L^\infty$-error & RoC\\ \hline
			\multirow{4}{*}{$0.25$}  
			&5  & 3.3418e-4 & 1.6510 && 1.0641e-3 & 1.7221 &&& 4.4063e-4  & 1.6093  && 1.6585e-3  & 1.5882 \\ 	
			&10 & 1.0641e-4 & 1.7902 && 3.2253e-4 & 1.8575 &&& 1.4443e-4  & 1.7757  && 5.5160e-4  & 1.7900 \\ 
			&20 & 3.0767e-5 & 1.8740 && 8.9004e-5 & 1.9247 &&& 4.2179e-5  & 1.8723  && 1.5951e-4  & 1.8908 \\ 
                &40 & 8.3936e-6 &        && 2.3443e-5 &        &&& 1.1520e-5  &         && 4.3013e-5  &        \\ \hline
			\multirow{4}{*}{$0.55$} 
			&5  & 2.4785e-4 & 1.8876 && 7.0088e-4 & 1.9165 &&& 3.0452e-4  & 1.8913  && 9.9772e-4  & 1.8840 \\ 	
			&10 & 6.6985e-5 & 1.9218 && 1.8566e-4 & 1.9547 &&& 8.2084e-5  & 1.9355  && 2.7031e-4  & 1.9396 \\ 
			&20 & 1.7679e-5 & 1.9390 && 4.7896e-5 & 1.9742 &&& 2.1459e-5  & 1.9566  && 7.0464e-5  & 1.9668 \\ 
                &40 & 4.6108e-6 &        && 1.2190e-5 &        &&& 5.5286e-6  &         && 1.8026e-5  &        \\ \hline
			\multirow{4}{*}{$0.85$} 
			&5  & 7.7559e-5 & 1.9752 && 2.1285e-4 & 1.9651 &&& 8.3332e-5  & 1.9846 && 2.4252e-4  & 1.9602 \\ 	
			&10 & 1.9726e-5 & 1.9495 && 5.4515e-5 & 1.9795 &&& 2.1057e-5  & 1.9686 && 6.2326e-5  & 1.9777 \\ 
			&20 & 5.1070e-6 & 1.9271 && 1.3823e-5 & 1.9876 &&& 5.3802e-6  & 1.9578 && 1.5824e-5  & 1.9865 \\ 
                &40 & 1.3429e-6 &        && 3.4856e-6 &        &&& 1.3850e-6  &        && 3.9934e-6  &        \\ \hline
		\end{tabular}
	\end{center}
\end{table}
%%%%%%%%%%%%%%%%%%%%%%%%%%%%%%%%%%%%%%%%%%%%%%%%%%%%%%%%%%%%%%%%%%%%%%%%%%%%%%%%%%%%%%%%%%%%%%%%%%%%%%%%%

\begin{table}[ht]
\caption {Error ($L^2$ \& $L^\infty$) and rate of convergence (RoC) based on $L2$-$1_\sigma$ scheme with $2M_1=2M_2=32$ for Example \ref{example4} with solution {\em having time singularity (uniform mesh $\nu=1$)}.}\label{Ex4a_table2}
	\begin{center}
		\begin{tabular}{llllllll}
			\hline
			$\alpha$ & $N$ & $L^2$-error & RoC &&& $L^\infty$-error & RoC \\ \hline
			\multirow{4}{*}{$0.1$}  
			&5  & 4.4966e-4  & 0.5668  &&& 2.0422e-3  & 0.1313 \\ 	
			&10 & 3.0358e-4  & 0.5908  &&& 1.8645e-3  & 0.1261 \\ 
			&20 & 2.0157e-4  & 0.6069  &&& 1.7084e-3  & 0.1261 \\ 
                &40 & 1.3235e-4  &         &&& 1.5654e-3  &        \\ \hline
			\multirow{4}{*}{$0.3$} 
			&5  & 7.6773e-4  & 0.8322  &&& 3.4665e-3  & 0.3990 \\ 	
			&10 & 4.3122e-4  & 0.8805  &&& 2.6289e-3  & 0.4196 \\ 
			&20 & 2.3423e-4  & 0.9217  &&& 1.9653e-3  & 0.4458 \\ 
                &40 & 1.2365e-4  &         &&& 1.4430e-3  &        \\ \hline
			\multirow{4}{*}{$0.5$} 
			&5  & 6.3659e-4  & 1.1704 &&& 2.8479e-3   & 0.74083 \\ 	
			&10 & 2.8284e-4  & 1.2899 &&& 1.7042e-3   & 0.83734 \\ 
			&20 & 1.1567e-4  & 1.4477 &&& 9.5379e-4   & 0.97652 \\ 
                &40 & 4.2408e-5  &        &&& 4.8472e-4   &         \\ \hline
		\end{tabular}
	\end{center}
\end{table}

%%%%%%%%%%%%%%%%%%%%%%%%%%%%%%%%%%%%%%%%%%%%%%%%%%%%%%%%%%%%%%%%%%%%%%%%%%%%%%%%%%%%%%%%%%%%%%%%%%%%%%%%%
	
\begin{table}[ht]
	\caption {Error ($L^2$ \& $L^\infty$) and rate of convergence (RoC) based on $L2$-$1_\sigma$ scheme with $2M_1=2M_2=32$ for Example \ref{example4} having solution $\mathcal{U}\in C^3(\overline{\Omega}_t)$ {\em (uniform mesh $\nu=1$)}.}\label{Ex4b_table1}
	\begin{center}
		\begin{tabular}{llllllll}
			\hline
			$\alpha$ & $N$ & $L^2$-error & RoC &&& $L^\infty$-error & RoC \\ \hline
			\multirow{4}{*}{$0.4$}  
			&5  & 4.0744e-4 & 2.012 &&& 1.3910e-3 & 1.962 \\ 	
			&10 & 1.0101e-4 & 2.010 &&& 3.5696e-4 & 1.984 \\ 
			&20 & 2.5082e-5 & 2.007 &&& 9.0253e-5 & 1.993 \\ 
                &40 & 6.2403e-6 &       &&& 2.2668e-5 &       \\ \hline
			\multirow{4}{*}{$0.8$} 
			&5  & 7.5992e-4 & 2.060 &&& 2.7544e-3 & 1.980 \\ 	
			&10 & 1.8226e-4 & 2.038 &&& 6.9799e-4 & 1.995 \\ 
			&20 & 4.4364e-5 & 2.024 &&& 1.7513e-4 & 2.000 \\ 
                &40 & 1.0910e-5 &       &&& 4.3772e-5 &       \\ \hline
		\end{tabular}
	\end{center}
\end{table}

%%%%%%%%%%%%%%%%%%%%%%%%%%%%%%%%%%%%%%%%%%%%%%%%%%%%%%%%%%%%%%%%%%%%%%%%%%%%%%%%%%%%%%%%%%%%%%%%%%%%%%%%%%%%%%%%%%%%%%%%%%%%%%%%%%%%%%%%%%%%%%%%%%%%%
\section{Application}
In this section, we present two different fractional-order models in terms of practical implementation for special cases. The parameters and the initial and boundary conditions associated with the proposed problem (\ref{Pdoc1_1}) are chosen in such a way that it boils down to the following physical models: the {\em time-fractional Fokker-Planck equation} and the {\em time-fractional viscoelastic model}. 

\subsection{Fractional-order Fokker-Planck dynamics: Diffusion as a limit of random walk}
The stochastic motion of Brownian particles in a potential $U(x)=V(x)-Fx$, where $V(x)=V(x+L)$ represents a periodic substrate potential with period $L$ and $F$ is the constant bias, is well established in the literature. The Fokker-Planck equation describing the overdamped Brownian motion in this potential can be extended to account for anomalous transport. This extension is referred to as the {\em fractional Fokker-Planck} equation \cite{HeinsaluFractional2006}, and it is expressed as:
\[\dfrac{\partial}{\partial t}P(x,t)=\mathcal{J}^{1-\alpha}\Big[\dfrac{\partial}{\partial x}\dfrac{U'(x)}{\eta_\alpha}+K_\alpha\dfrac{\partial^2}{\partial x^2}\Big]P(x,t),\]
where $\alpha\in(0,1)$. $P(x,t)$ is the probability density function (pdf) and $K_\alpha$ denotes the anomalous diffusion coefficient with physical dimension [$m^2~s^{-\alpha}$]. The generalized friction coefficient, $\eta_\alpha$, has dimension [$kg~s^{\alpha-2}$]. $\mathcal{J}^{1-\alpha}$ denotes the Riemann-Liouville fractional integral operator \cite{MillerAnintroduction1993} of order $1-\alpha$ with respect to time. An equivalent form of the fractional Fokker-Planck equation can be expressed using the Caputo fractional derivative, as demonstrated in the following example.
\begin{example}\label{example5}
Consider the following time-fractional Fokker-Planck equation \cite{DengNumerical2007}:
\begin{equation*}
    \left\{
    \begin{array}{ll}
         \partial_t^\alpha P(x,t)-K_\alpha \dfrac{\partial^2}{\partial x^2}P(x,t)-\dfrac{U'(x)}{\eta_\alpha}\dfrac{\partial}{\partial x}P(x,t)-\dfrac{U''(x)}{\eta_\alpha}P(x,t)=0,~(x,t)\in(1,11)\times(0,1],\\[6pt]
         P(x,0)=\psi(x),~x\in[1,11],\\
         P(1,t)=\phi_1(t),~P(11,t)=\phi_2(t),~t\in[0,1],
    \end{array}\right.
\end{equation*}
\end{example}
where $\alpha\in(0,1)$. The parameters are given by: $U(x)=\cos x-Fx,~F=6,~\eta_\alpha=6,~K_\alpha=2,~\psi(x)=0.10,~\phi_1(t)=0.10,~\phi_2(t)=0.10$. The accuracy of the proposed methodology can be verified by estimating error and the rate of convergence using the formula provided in (\ref{Pdoc1_19}). It can be noted that the solution of this time-fractional Fokker-Planck equation does not violet the singular behaviour of the fractional operator as illustrated graphically in Figure \ref{Ex5_fig_fig1}(a). A cross-sectional view is displayed in Figure \ref{Ex5_fig_fig1}(b), which represents the evolution of the pdf $P(x,t)$ when $t=0.1,0.4,0.9$. The graphical representation ensures that the solution to the given time-fractional Fokker-Planck equation has unbounded time derivative at $t=0$ for which the present method leads to a $\mathcal{O}(N^{-2})$ temporal rate of accuracy on a suitably chosen non-uniform mesh as it is confirmed in Table \ref{Ex5_table1}. Also, see the theoretical explanation in Theorem \ref{Pdoc1_thm2}. On the contrary, it reduces to $\mathcal{O}(N^{-\alpha})$ on a uniform mesh. 

%%%%%%%%%%%%%%%%%%%%%%%%%%%%%%%%%%%%%%%%%%%%%%%%%%%%%%%%%%%%%%%%%%%%%%%%%%%%%%%%%%%%%%%%%%%%%%%%%%%%%%%%%

 \begin{figure}[ht]
	\begin{subfigure}{.5\textwidth}
		\centering
		% include first image
		\includegraphics[width=0.95\linewidth]{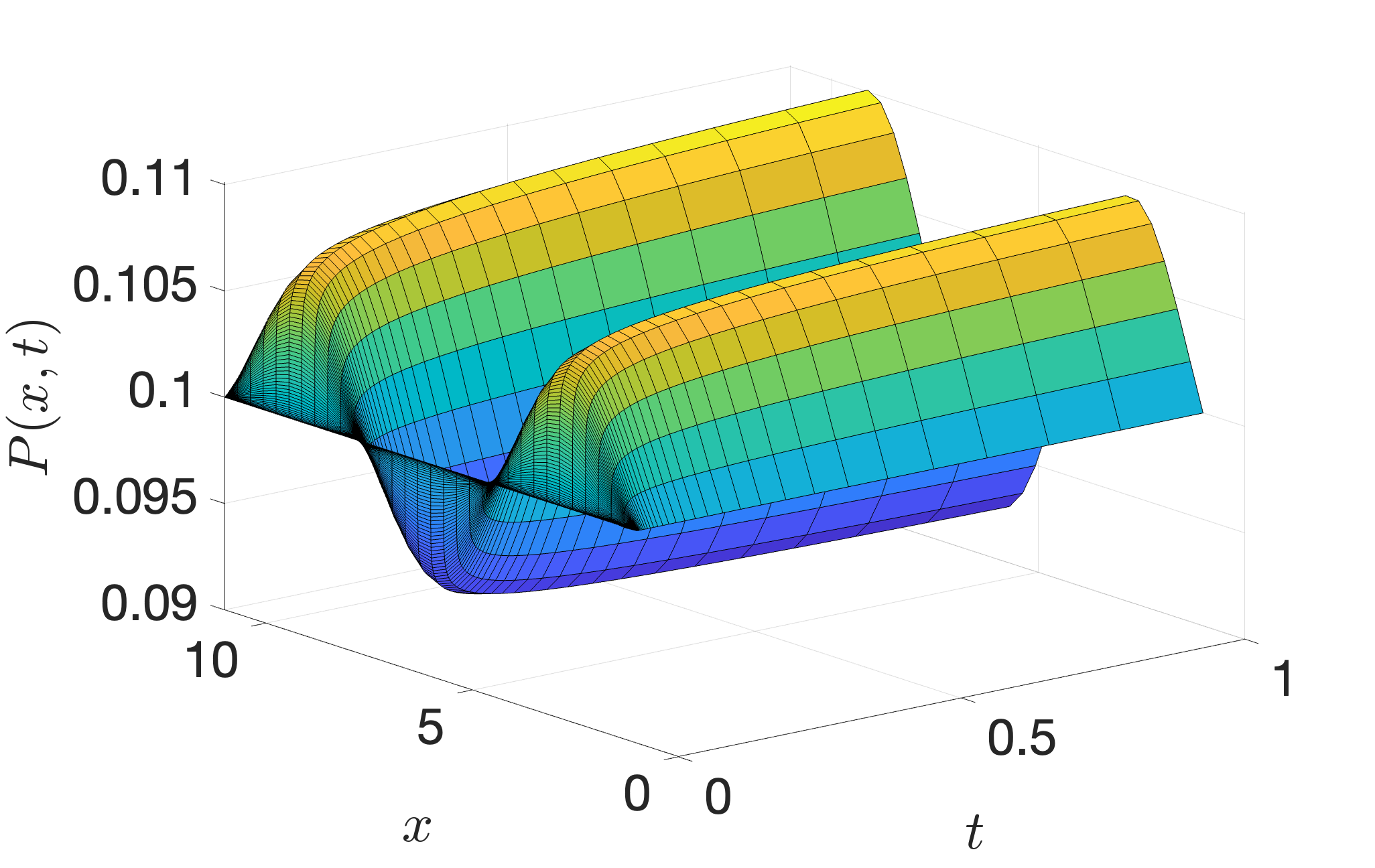}
		\caption{$M=32,~N=64$ and $\nu=2/\alpha$.}
		\label{Ex5_Fig1}
	\end{subfigure}
	\begin{subfigure}{.5\textwidth}
		\centering
		% include second image
		\includegraphics[width=0.95\linewidth]{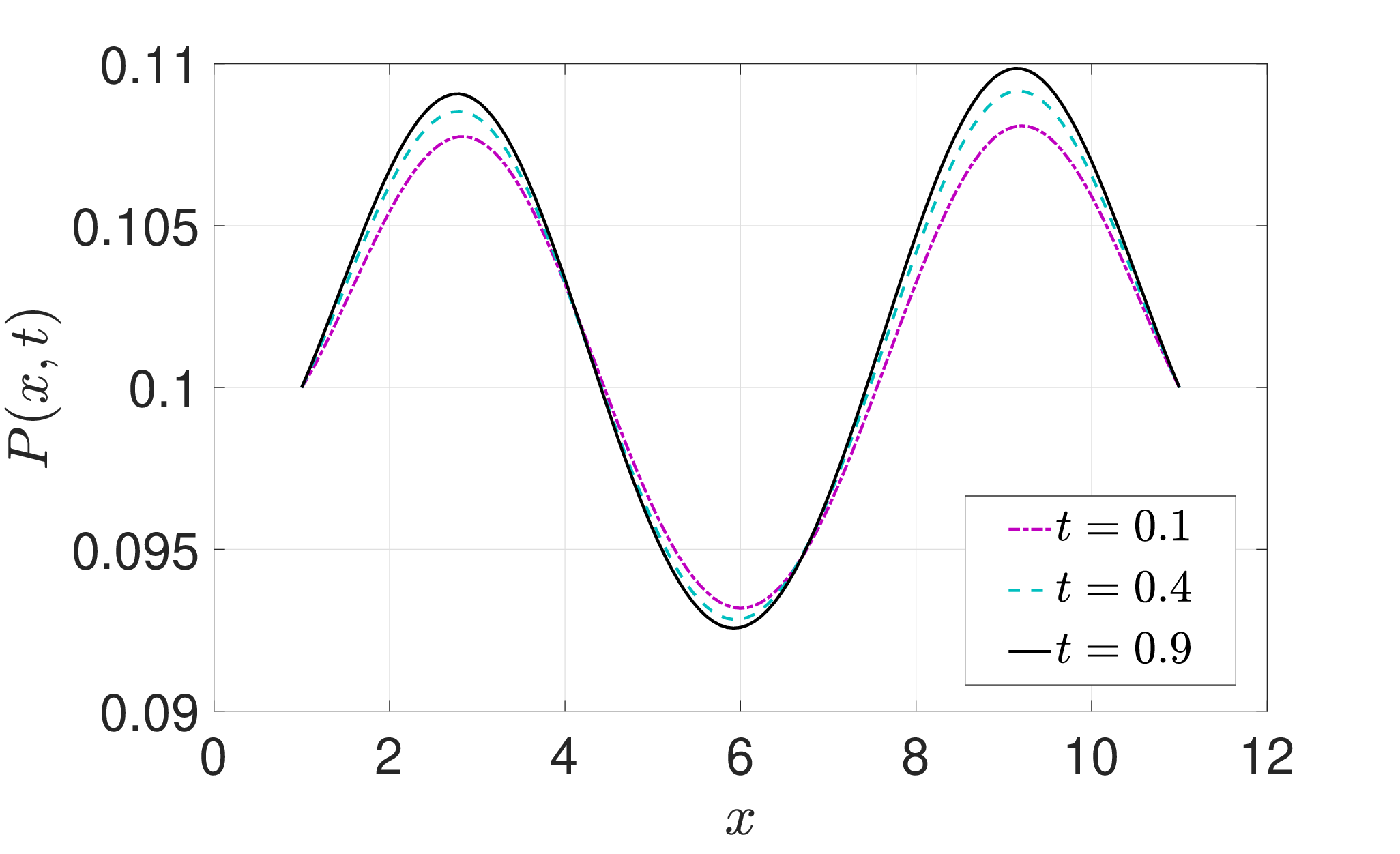}
		\caption{$M=64,~N=10$ and $\nu=1$.}
		\label{Ex5_Fig2}
	\end{subfigure}
	\caption{\small{Numerical solution to the time-fractional Fokker-Planck equation for $\alpha=0.2$.}}
	\label{Ex5_fig_fig1}
\end{figure}
%%%%%%%%%%%%%%%%%%%%%%%%%%%%%%%%%%%%%%%%%%%%%%%%%%%%%%%%%%%%%%%%%%%%%%%%%%%%%%%%%
\begin{table}[ht]
	\caption {Maximum error (Max-Error) and rate of convergence (RoC) with fixed $M=1000$ based on $L2$-$1_\sigma$ scheme for Example \ref{example5}.}\label{Ex5_table1}
	\begin{center}
		\begin{tabular}{llllllllllll}
			\hline
			$\alpha$ & $N$ & \multicolumn{6}{c}{Graded mesh} &&& \multicolumn{2}{c}{Uniform mesh} \\ \cline{3-8} \cline{11-12}
                         &     & \multicolumn{2}{c}{$\nu=2/\alpha$} &&& \multicolumn{2}{c}{$\nu=(3-\alpha)/\alpha$} &&& \multicolumn{2}{c}{$\nu=1$}\\\cline{3-8} \cline{11-12}
			&  & Max-Error & RoC &&& Max-Error & RoC &&& Max-Error & RoC \\ \hline
			\multirow{4}{*}{$0.3$}  
			&32  & 1.1919e-6  & 1.9478  &&& 7.5248e-7  & 1.8901 &&& 1.2523e-4  & 0.1230 \\ 	
			&64  & 3.0896e-7  & 1.9815  &&& 2.0301e-7  & 1.9163 &&& 1.1500e-4  & 0.1908 \\ 
			&128 & 7.8237e-8  & 1.9946  &&& 5.3781e-8  & 1.9395 &&& 1.0075e-4  & 0.2313 \\ 
                &256 & 1.9633e-8  &         &&& 1.4021e-8  &        &&& 8.5831e-5  &        \\ \hline
			\multirow{4}{*}{$0.5$} 
			&32  & 1.0893e-6  & 1.9537  &&& 7.0003e-7  & 1.8998 &&& 8.9586e-5  & 0.2786 \\ 	
			&64  & 2.8121e-7  & 1.9863  &&& 1.8759e-7  & 1.9129 &&& 7.3856e-5  & 0.3314 \\ 
			&128 & 7.0975e-8  & 1.9960  &&& 4.9817e-8  & 1.9253 &&& 5.8698e-5  & 0.3819 \\ 
                &256 & 1.7793e-8  &         &&& 1.3116e-8  &        &&& 4.5045e-5  &        \\ \hline
			\multirow{4}{*}{$0.7$} 
			&32  & 8.2781e-7  & 1.9345  &&& 6.3764e-7  & 1.8989 &&& 4.1895e-5  & 0.5951 \\ 	
			&64  & 2.1657e-7  & 1.9759  &&& 1.7098e-7  & 1.9148 &&& 2.7735e-5  & 0.6396 \\ 
			&128 & 5.5055e-8  & 1.9930  &&& 4.5344e-8  & 1.9301 &&& 1.7803e-5  & 0.6642 \\ 
                &256 & 1.3831e-8  &         &&& 1.1899e-8  &        &&& 1.1234e-5  &        \\ \hline
		\end{tabular}
	\end{center}
\end{table}
%%%%%%%%%%%%%%%%%%%%%%%%%%%%%%%%%%%%%%%%%%%%%%%%%%%%%%%%%%%%%%%%%%%%%%%%%%%%%%%%%%%%
\subsection{Fractional-order Viscoelastic dynamics: An integral representation}
Let $\boldsymbol{\sigma}(x,t)$ and $\boldsymbol{\epsilon}$ denote the stress and strain, respectively, of a viscoelastic material. The temporal evolution of stress and strain in linear viscoelastic materials is commonly modeled by incorporating derivatives of both quantities. A standard viscoelastic model, as introduced by Bagley and Torvik \cite{BagleyFractional1983}, can be expressed as:
\[\boldsymbol{\sigma}(x,t)+\varrho_\epsilon\boldsymbol{\sigma}_t(x,t)=E_0\Big(\boldsymbol{\epsilon}(x,t) +\varrho_\sigma\boldsymbol{\epsilon}_t(x,t)\Big),\]
where $E_0$ is the prolonged modulus of the elasticity, $\varrho_\epsilon$ and $\varrho_\sigma$ represent the relaxation and retardation times, respectively. To capture more complex viscoelastic behaviors, particularly those exhibiting memory effects, fractional derivatives are introduced into the model. The general fractional-order viscoelastic model, as discussed in Konjik {\em et al.} \cite{KonjikWaves2010}, is given by:
\[\boldsymbol{\sigma}(x,t)+\varrho_\epsilon^\gamma\partial_t^\gamma\boldsymbol{\sigma}(x,t)=E_0\Big(\boldsymbol{\epsilon}(x,t) +\varrho_\sigma^\beta\partial_t^\beta\boldsymbol{\epsilon}(x,t)\Big),~\gamma<\beta.\]
$\gamma,~\beta$ are the orders of the fractional operators that lie between $0$ and $1$. It is important to note that the model represents instantaneous elasticity and describes wave processes if $\gamma<\beta$ whereas for $\gamma=\beta$, it lacks instantaneous elasticity and describes diffusion processes. Applying $\mathcal{J}^\gamma$ on both sides, one has the integral representation of the {\em fractional-order viscoelastic model} as:
\[\partial_t^{\alpha}\boldsymbol{\epsilon}(x,t)+\dfrac{1}{\varrho_\sigma^\beta\Gamma(\gamma)}\int_0^t(t-s)^{\gamma-1}\boldsymbol{\epsilon}(x,s)~ds=\mathcal{F}(x,t),\]
where $\alpha=\beta-\gamma\in(0,1)$, and $\mathcal{F}(x,t)=\dfrac{\varrho_\epsilon^\gamma}{E_0\varrho_\sigma^\beta}\boldsymbol{\sigma}(x,t)+\dfrac{1}{E_0\varrho_\sigma^\beta\Gamma(\gamma)}\displaystyle{\int_0^t}(t-s)^{\gamma-1}\boldsymbol{\sigma}(x,s)ds$. Here, the stress $\boldsymbol{\sigma}$ is considered to be known with $\boldsymbol{\sigma}(x,0)=0$, and the strain function $\boldsymbol{\epsilon}$ is unknown that satisfies the initial condition $\boldsymbol{\epsilon}(x,0)=0$. Hence, we have the more general version of the fractional order viscoelastic model described in the following example.
\begin{example}\label{example6}
Consider the following time-fractional viscoelastic model \cite{MohammadiComputational2024,AvazzadehLegendre2019} for $\Omega\subset\mathbb{R}^2$ involving weakly singular kernel:
\begin{equation*}
    \left\{
    \begin{array}{ll}
         \partial_t^\alpha \mathcal{U}(x,y,t)-\Delta\mathcal{U}(x,y,t)+\displaystyle{\int_0^t}(t-\xi)^{-\vartheta}\Delta\mathcal{U}(x,y,\xi)d\xi=f(x,y,t),~(x,y,t)\in\Omega\times\Omega_t,\\[6pt]
         \mathcal{U}(x,y,0)=0,~(x,y)\in\overline{\Omega},\\
         \mathcal{U}(x,y,t)=0,~(x,y,t)\in\partial\Omega\times\overline{\Omega}_t,
    \end{array}\right.
\end{equation*}
\end{example}
where $\alpha,\vartheta\in(0,1)$. The source function $f(x,y,t)$ is given by:
\[f(x,y,t)=\Big(\Gamma(\alpha+1)+8\pi^2t^\alpha-8\pi^2\boldsymbol{B}(\alpha+1,1-\vartheta)t^{1+\alpha-\vartheta}\Big)\sin2\pi x\sin2\pi y.\]
$\boldsymbol{B}$ denotes the well-known Euler beta function. The analytical solution for Example \ref{example6} is $\mathcal{U}=t^\alpha\sin2\pi x\sin2\pi y$. The proposed approximation to discretize the integral operator will not work straightforwardly as the operator involves a weakly singular kernel. It requires some advanced modification in the trapezoidal rule (see \cite{SantraAnalysis2024}) to deal with the weakly singular integral operator involving mixed derivatives. The $L2$-$1_\sigma$ scheme is used to discretize the time-fractional operator as well. Then, the multi-dimensional Haar wavelet approximation is used to proceed further. The computational output displayed in Table \ref{Ex6_table1} clearly indicates the high accuracy of the proposed approach on a non-uniform mesh. It also highlights the convergence on a uniformly distributed mesh.

%%%%%%%%%%%%%%%%%%%%%%%%%%%%%%%%%%%%%%%%%%%%%%%%%%%%%%%%%%%%%%%%%%%%%%%%%%%%%%%%%%%%%%%%%%%%%%%%%%%%%%%%%

\begin{table}[ht]
\caption {Error ($L^2$ \& $L^\infty$) and rate of convergence (RoC) based on $L2$-$1_\sigma$ scheme with $2M_1=2M_2=32$ for the viscoelastic model (Example \ref{example6}) having weakly singular kernel.}\label{Ex6_table1}
	\begin{center}
		\begin{tabular}{llllllllllllll}
			\hline
			$\alpha$ & $N$ & \multicolumn{5}{c}{$\nu=(3-\alpha)/\alpha$, $\vartheta=0.5$} &&& \multicolumn{5}{c}{$\nu=1$, $\vartheta=0.1$} \\ \cline{3-7}\cline{10-14}
                 &  & $L^2$-error & RoC    && $L^\infty$-error & RoC &&& $L^2$-error & RoC    && $L^\infty$-error & RoC\\ \hline
			\multirow{4}{*}{$0.5$}  
			&16  & 6.0343e-2 & 1.3273 && 4.3011e-1 & 1.1029 &&& 1.1695e-2  &  0.9105 && 3.7630e-2  & 0.7942 \\ 	
			&32  & 2.4047e-2 & 1.9259 && 2.0025e-1 & 1.8181 &&& 6.2217e-3  &  0.8806 && 2.1699e-2  & 0.5458 \\ 
			&64  & 6.3287e-3 & 2.3651 && 5.6789e-2 & 2.0359 &&& 3.3793e-3  &  0.7976 && 1.4864e-2  & 0.5996 \\ 
                &128 & 1.2284e-3 &        && 1.3848e-2 &        &&& 1.9441e-3  &         && 9.8091e-3  &        \\ \hline
			\multirow{4}{*}{$0.8$} 
			&16  & 6.6665e-2 & 1.1103 && 4.1806e-1 & 0.9735 &&& 1.3144e-2  & 0.8743  && 4.8259e-2  & 0.8706 \\ 	
			&32  & 3.0879e-2 & 1.4666 && 2.1291e-1 & 1.4013 &&& 7.1701e-3  & 0.8706  && 2.6394e-2  & 0.8781 \\ 
			&64  & 1.1173e-2 & 2.6109 && 8.0606e-2 & 2.8308 &&& 3.9213e-3  & 0.8187  && 1.4360e-2  & 0.8367 \\ 
                &128 & 1.8290e-3 &        && 1.1329e-2 &        &&& 2.2232e-3  &         && 8.0408e-3  &        \\ \hline
		\end{tabular}
	\end{center}
\end{table}

%%%%%%%%%%%%%%%%%%%%%%%%%%%%%%%%%%%%%%%%%%%%%%%%%%%%%%%%%%%%%%%%%%%%%%%%%%%%%%%%%%%%%%%%%%%
\begin{note}
It can be noticed that the models given in Examples \ref{example5} and \ref{example6} are the special cases of the given problem. By setting $\mu=0$ and appropriately choosing the coefficient functions, as well as the initial and boundary conditions, the proposed problem (\ref{Pdoc1_1}) reduces to the time-fractional Fokker-Planck equation without a source term for $\Omega\subset\mathbb{R}$. Similarly, introducing a weakly singular kernel within the Volterra integral operator with mixed derivatives reduces to a viscoelastic model in an integral representation. Consequently, the proposed problem represents a more generalized version of these models.
\end{note}

%%%%%%%%%%%%%%%%%%%%%%%%%%%%%%%%%%%%%%%%%%%%%%%%%%%%%%%%%%%%%%%%%%%%%%%%%%%%%%%%%%%%%%%%%%%%%%%%%%%%%%%%%%%%%%%%%%%%%%%%%%%%%%%%%%%%%%%%%%%%%%%%%%%%%%%%%%%%%%
	
\section{Concluding remarks}\label{sec_concl}		
In this work, we have successfully employed the $L2$-$1_\sigma$ scheme combined with the multi-dimensional Haar wavelets to address the time-fractional integro-partial differential equations having unbounded time derivatives at $t=0$. The non-uniform mesh in time is proven to be more effective in addressing such time singularities compared to the uniform mesh. The analysis includes the stability of the proposed scheme on a non-uniform mesh, which is based on the $L^\infty$ norm. The convergence analysis leads to $\mathcal{O}(N^{-2}+M^{-2})$ accuracy (when $\Omega\subset\mathbb{R}$) and $\mathcal{O}(N^{-2}+\mathcal{M}^{-3})$ accuracy (when $\Omega\subset\mathbb{R}^2$) over the space-time domain for a suitable choice of the grading parameter. It also highlights the higher-order accuracy for a sufficiently smooth solution resides in $C^3(\overline{\Omega}_t)$. We apply the proposed method to numerous test examples. The experiments confirm the reliability of the proposed method. It also demonstrates the superiority of the proposed methodology in terms of accuracy compared to some existing methods available in the literature. The advancement of the proposed methodology is also demonstrated through the application of the time-fractional Fokker-Planck equation and the fractional-order viscoelastic dynamics having weakly singular kernels. The proposed method can be extended to analyze the given problem with multiple fractional operators. The delay in time may be considered for future investigations as it appears to be more challenging with multi-singularities.

%%%%%%%%%%%%%%%%%%%%%%%%%%%%%%%%%%%%%%%%%%%%%%%%%%%%%%%%%%%%%%%%%%%%%%%%%%%%%%%%%%%%%%%%%%%%%%%%%%
\section*{Declarations}
\subsection*{Acknowledgement}
\noindent Sudarshan Santra wants to acknowledge the Axis Bank Centre for Mathematics and Computing, Indian Institute of Science, Bangalore for the financial assistance to carry out this research work in the Department of Computational and Data Sciences, Indian Institute of Science, Bangalore.

\subsection*{Ethical Approval}
In this manuscript, all the authors have agreed to authorship and approved the manuscript with consent for submission.
 
\subsection*{Funding Details}
\noindent No funding source is available for this research.
%Ratikanta Behera is grateful for the support from the Science and Engineering Research Board (SERB), Department of Science and Technology, India, under Grant No. SRG/2023/002710. 

\subsection*{Data Availability Statements}
The data sets generated during and/or analyzed for the current study are available within this manuscript. No extra data is used for this research.

\subsection*{Conflicts of Interest and Declarations}	
The author declares that he does not have any conflicts of interest. In addition, he also declares that this work is not under consideration to anywhere.	
%%%%%%%%%%%%%%%%%%%%%%%%%%%%%%%%%%%%%%%%%%%%%%%%%%%%%%%%%%%%%%%%%%%%%%%%%%%%%%%%%%%%%%%%%%%%%%%%%%%%%%%%%%%%%%%%%%%%%%%%%	
		
		%%%% Acknowledgment %%%%%%%%
%		\section*{Acknowledgement}
%		The author would like to thank the referees for the helpful
%		suggestions.
		
%%%% Bibliography  %%%%%%%%%%

\bibliography{ReferencesSS}

\end{document}